\documentclass[11pt]{article}
\usepackage[utf8]{inputenc}

\usepackage[a4paper]{geometry}
\usepackage[dvipsnames]{xcolor}
\usepackage{amsfonts,amsmath,amssymb,amsthm,mathrsfs,bbm,mathtools,dsfont,appendix,float,lineno,setspace} 
\usepackage{subfig}
\usepackage[shortlabels]{enumitem}
\usepackage[pagebackref=true,colorlinks=true, linkcolor=RoyalPurple, citecolor=RoyalPurple]{hyperref}
\usepackage[nameinlink]{cleveref} 
\usepackage{natbib}
\usepackage{stmaryrd}
\usepackage{tikz}

\renewcommand*\backref[1]{
}

\theoremstyle{plain}
\newtheorem{definition}{Definition}
\newtheorem{proposition}[definition]{Proposition}
\newtheorem{lemma}[definition]{Lemma}
\newtheorem{corollary}[definition]{Corollary}
\newtheorem{theorem}{Theorem}

\theoremstyle{definition}

\numberwithin{definition}{section}
\numberwithin{equation}{section} 

\newcommand{\be}{\overline{e}}
\renewcommand*{\P}{\mathbb{P}}
\newcommand*{\E}{\mathbb{E}}
\newcommand*{\R}{\mathbb{R}}

\newcommand*{\Z}{\mathbb{Z}}

\newcommand*{\N}{\mathbb{N}}

\newcommand*{\Bcal}{\mathcal{B}}
\newcommand*{\Ecal}{\mathcal{E}}

\newcommand*{\Fcal}{\mathcal{F}}

\newcommand*{\Gcal}{\mathcal{G}}
\newcommand*{\Ccal}{\mathcal{C}}

\newcommand*{\Scal}{\mathcal{S}}

\newcommand*{\Hcal}{\mathcal{H}}

\newcommand{\norm}[1]{\left \lVert  #1 \right \rVert}

\newcommand*{\1}{\mathds{1}}
\newcommand{\x}{\boldsymbol{x}}
\newcommand*{\y}{\boldsymbol{y}}

\renewcommand*{\d}{\mathrm{d}}

\newcommand{\ssup}[1] {{{\scriptscriptstyle{{#1}}}}} 
\newcommand{\Ssup}[1]{^{\ssup{(#1)}}}

\DeclareMathOperator{\Label}{Label}
\DeclareMathOperator{\Type}{Type}
\DeclareMathOperator{\Int}{Int}

\crefname{equation}{}{}

\title{First-order phase transition for Gibbs point processes with saturated interactions}

\author{D.~Dereudre, C.~Renaud-Chan}

\date{\today}

\begin{document}
	
	\maketitle
	
	\begin{abstract}
		We study first-order phase transitions in continuum Gibbs point processes with saturated interactions. These interactions form a broad class of Hamiltonians in which the local energy in regions of high particle density depends only on the number of points. Building on ideas of Pirogov–Sinaĭ-Zahradnik theory and its adaptations to the continuum, we develop a general method for establishing the existence of two distinct infinite-volume Gibbs measures with different intensities in this setting, demonstrating a first-order phase transition. Our approach extends previous results obtained for the Quermass model and applies in particular to a new class of diluted pairwise interactions introduced in this work. 
	\end{abstract}
	
	\section{Introduction}
	
	A grand canonical Gibbs point process on a finite volume $\Delta \subset \mathbb{R}^d$ is a probability measure with unnormalized density given by the Boltzmann factor $e^{-\beta H}$ with respect to the Poisson point process on $\Delta$ with activity $z>0$. Here, $\beta>0$ is the inverse temperature and $H$ is the Hamiltonian. Infinite-volume Gibbs point processes are defined as solutions to the Dobrushin--Lanford--Ruelle equations, which characterize equilibrium states of the system. The question of uniqueness or non-uniqueness of such probability measures on the configuration space is both difficult and has a long history since \cite{RuelleWR}.
	
	It is known that for pairwise interactions in regimes where both $z$ and $\beta$ are small, the Gibbs measure is unique. This has been established through various approaches: in \cite{ruelle1970superstable} using the Kirkwood--Salsburg equations; in \cite{mayer1941molecular} via cluster expansion techniques to prove analyticity of the pressure; in \cite{dobrushin2006criterion} and \cite{Houdebert_Zass_2022} using Dobrushin's uniqueness criterion; and in \cite{betschLastUniqueness} through a coupling with the random connection model.
	
	A first-order phase transition occurs when the pressure of the system is non-differentiable. This phenomenon is related to the non-uniqueness of Gibbs measures, and one can show the existence of two Gibbs measures with different intensities. In the literature, there are only a few results on phase transitions for continuum Gibbs measures without spins. 
	
	The most well-known results concern the Area-Interaction model, which was introduced by \cite{WidomRowlinson} and the phase transition was established using different methods in \cite{RuelleWR}, \cite{2ChayesKotecky}, and \cite{giacomin1995agreement}. Its phase diagram was completed in \cite{HoudebertDereudre}, showing that non-uniqueness occurs only when $z=\beta$; however, the question remains open in a small neighborhood of the critical point. In one dimension, a phase transition for a Lennard--Jones interaction was proved in \cite{johansson1995separation}. We also mention the classical result of \cite{LebowitzMazelPresutti} for Kac-type interactions, whose proof adapts Pirogov-Sinaï theory to the continuum. More recently, a similar phase transition result was obtained for a modified version of this Kac-type interaction in \cite{he2025liquidvaportransitionmodelcontinuum}.
	
	In this paper, we develop a general method to prove first-order phase transitions for saturated interactions. This work generalizes previous results on the Quermass model \cite{renaudchanQuermass} to a broader class of interactions. Our approach is inspired by the ideas and techniques of Pirogov, Sinaï, and Zahradn\'ik \cite{PirogovSinai, PirogovSinai2, zahradnik1984alternate}, adapted to the continuum. This class of saturated interactions corresponds to Hamiltonians for which the energy in regions of high local particle density depends only on the number of points. We introduce a new class of interaction that we call the diluted pairwise interaction. It is characterized by a pair potential and a parameter $R>0$ that we call the dilution scale. The diluted pairwise interaction is an approximation of the pairwise interaction under a proper renormalisation and as $R \to 0$ we recover the pairwise interaction. Using our framework, we establish a new phase transition result for the diluted pairwise interactions. Moreover, for pair potential that is strongly repulsive at short range, such as the Lennard-Jones potential, we show that for any scale of dilution there is always a way to truncate the pair potential at the origin and have a phase transition for this truncated pair potential. This corollary opens a new path to study phase transition for pairwise interactions for which no phase transition results are known. 
	
	Our paper is organized as follows. In Section \ref{Section2}, we introduce the notation, the classes of saturated and diluted pairwise interactions, and provide the definition of Gibbs point processes and the notion of contours. In Section \ref{Section3}, we present the main results of this paper. In Section \ref{sec: proof_thm1}, we prove the general first-order phase transition result for saturated interactions. Finally, in Section \ref{sec: proof_thm2}, we establish that under certain assumptions, the diluted pairwise interaction satisfies a Peierls condition and thus exhibits a first-order phase transition.
	
	\tableofcontents
	
	\section{Definitions and models}\label{Section2}
	
	\subsection{Setting and notations}
	
	We denote by $\Bcal_b(\R^d)$ the set of bounded Borel sets of $\R^d$ with positive Lebesgue measure. For any sets $A$ and $B$ in $\Bcal_b(\R^d)$, $A \oplus B$ stands for the Minkowski sum of these sets, i.e. $A \oplus B = \{a+b, \forall a \in A, \forall b \in B\}$. Let $M$ ne a Polish space describing the mark or the spin state of particles. We define $\Scal := \R^d \times M $ the state space of a single marked point. For any $(x,m)\in\Scal$, the first coordinate $x$ is for the location of the point and the second coordinate $m$ is the mark of a particle. For any subset $C \subset \Scal$ and any $ u\in \R^d$, $C + u := \{(x+u, m), (x,m) \in C \}$ is the translation of $C$ by a vector $u$. For any set $\Delta \in \mathcal{B}_b(\R^d)$, $\Scal_\Delta$ is the local state space $\Delta \times M$. A configuration of marked points $\omega$ is a locally finite set in $\Scal$; i.e. $N_\Delta(\omega) := \#(\omega \cap E_\Delta)$ is finite for any $\Delta \in \Bcal_b(\R^d)$. We denote by $\Omega$ the set of all marked point configurations and by $\Omega_f$ its restriction to finite configurations. For any $\omega\in\Omega$, its projection in $\Delta\subset \R^d$ is defined by $\omega_\Delta:=\omega \cap \Scal_\Delta$. We equip the state space $\Omega$ with the $\sigma$-algebra generated by the counting functions on $\Scal$, $\omega \mapsto \#(\omega \cap (\Delta \times A))$ for $\Delta \in \Bcal_b(\R^d)$ and $ A \in \Bcal(M)$. We consider a reference measure $\lambda \otimes \P_M$ on $\R^d \times M$ where $\lambda$ is the Lebesgue measure on $\R^d$ and $\P_M$ a probability measure on the mark space, $M$, of the particle.  
	A point process $\mathbb{X}$ is a random variable on the set of configurations, $\Omega$, and it is said to be stationary in space if for any $u \in \R^d$, $\mathbbm{X}+u \overset{d}{=} \mathbbm{X}$. Let $z>0$ be the activity, we denote by $\Pi^z$ the Poisson point process with intensity measure $z \lambda \otimes \P_M$. For any $\Delta \in \Bcal_b(\R^d)$, we denote by $\Pi_\Delta^z$ the restriction of the Poisson point process to the configurations inside $\Delta$.
		
	\subsection{Interaction}
	
	Let us introduce the general assumptions on the interaction between particles of our system. The Hamiltonian, also called the energy functional, is a measurable function from $\Omega_f$ to $\R \cup \{+\infty\}$. For any $\Lambda \in \Bcal_b(\R^d)$ and any configuration $\omega \in \Omega$ we define the local energy in $\Lambda$ of the configuration $\omega$ as
	\begin{equation}
		H_\Lambda(\omega) := \lim\limits_{r \to \infty} \left( H(\omega_{\Lambda \oplus B(0,r)}) - H(\omega_{\Lambda \oplus B(0,r) \setminus \Lambda}) \right)
	\end{equation}
	if the limit exists. Similarly we define the local energy of a point $x \in \R^d$ in a configuration $\omega \in \Ccal$ as
	\begin{equation}
		h(x,\omega) := \lim\limits_{r \to \infty} \left( H(\{x\} \cup \omega_{B(x,r)}) - H(\omega_{B(x,r) }) \right)
	\end{equation}
	if the limit exists. We assume that the Hamiltonian satisfies the following set of assumptions: 
	\begin{enumerate}[($\Hcal$1)]\label{H}
		\item {\bf Stability}. There exists $C_s \geq 0$ such that for all $\omega \in \Omega_f$, \ $H(\omega) \geq  - C_s N(\omega)$.  \label{H:stability}
		\item  {\bf Non-degeneracy}. $H(\emptyset) < +\infty$.  \label{H:nondegenerate}
		\item {\bf Heredity}. If for $\omega \in \Omega_f$  $H(\omega) = \infty$ then for any $x \in \R^d$,  $H(\omega \cup \{x\}) = + \infty$. \label{H:heredity}
		\item {\bf Invariance by translation}. For all $u\in\R^d$ and $\omega \in \Omega_f$, we have $H(\omega) = H(\omega + u )$.  \label{H:invariant}
		\item {\bf Finite range}. There is $R>0$ such that for all $\omega \in \Omega$ and $\Delta \in \Bcal_b(\R^d)$, $H_\Delta(\omega) = H_\Delta (\omega_{\Delta \oplus B(0,R)})$.  \label{H:finiterange}
	\end{enumerate}
	These are classical assumptions on the Hamiltonian that are verified by pairwise interactions with Strauss potential or truncated Lennard-Jones potential and also by multibody interaction such as the Area interaction or the Quermass interaction. In this paper, however we will be interested in a subclass of interaction that satisfy a saturation property. 
	
	\subsection{Coarse graining and saturation} \label{subsec: def_coarse_sat}
	
	Let us consider a paving of $\R^d$ with tiles of length $\delta >0$. For any integer $i \in \Z^d$, we denote the $i$-th tile with  $T_i := \left[ -\frac{\delta}{2}, \frac{\delta}{2}\right)^d + i\delta$. For any subset $\Lambda \subset \Z^d$, we denote by $\hat \Lambda = \bigcup_{i \in \Lambda} T_i$. Before going further, we define two sets of locally homogeneous configurations around a tile. Let $L>0$ and $i \in \Z^d$, we define the homogeneous dense (resp.\ empty) configurations around $T_i$,
	\begin{gather}
		\Omega_{i,L}\Ssup{1} := \{ \omega \in \Omega, \omega_{T_i} \neq \emptyset, \forall j \in \Z^d, \delta \norm{i-j}\leq L\}, \\
		\Omega_{i,L}\Ssup{0} := \{ \omega \in \Omega, \omega_{T_i} = \emptyset, \forall j \in \Z^d, \delta \norm{i-j}\leq L\},
	\end{gather}
	where $\norm{\cdot}$ denotes the Euclidean norm. We assume that there is a measurable function $E_0 : \Omega_f \mapsto \R \cup \{\infty\}$ such that for any $\omega \in \Omega_f$,
	\begin{equation}
		H(\omega) = \sum_{i \in \Z^d}E_i(\omega),
	\end{equation}
	where $E_i (\omega) = E_0(\omega - \delta i)$. $E_i$ is the energy assigned to the tile $T_i$. In order to simplify the notations, for any $\Lambda \in \Z^d$, the total energy assigned on $\hat \Lambda$ is denoted by
	\begin{equation}
		E_\Lambda = \sum_{i \in \Lambda} E_i.
	\end{equation}
	Note that such a measurable function $E_0$ might not be unique. We assume that the energy assigned to $T_0$, $E_0$, satisfies the following set of assumptions :
	\begin{enumerate}[($\Ecal$1)]\label{E}
		\item {\bf Non-degenerate}. $E_0(\emptyset) = 0$. \label{E:non-degenerate}
		\item {\bf Local function}. There exists $r>0$ such that for any configuration $\omega \in \Omega$, $$E_0(\omega) = E_0(\omega_{T_0 \oplus B(0,r)}).$$\label{E:localF}
		\item {\bf Stable}. There exists $c_s \geq 0$ such that for any configuration $\omega \in  \Omega$, $$E_0(\omega) \geq -c_s (1 + N_{T_0 \oplus B(0,r)}(\omega)).$$ \label{E:stable}
		\item {\bf Tame function}. There exists $c_t \geq 0$ such that for any configuration $\omega \in  \Omega$, $$E_0(\omega) \leq c_t (1 + N_{T_0 \oplus B(0,r)}^2(\omega)).$$\label{E:tame}
		\item {\bf Saturation}. There exists $L>0$, $\delta>0$, $\be \in \R$ and $\be_0 \in \R$  such that for any homogeneous configurations around $T_0$, $$\forall \omega \in \Omega_{i,L}\Ssup{0} \cup \Omega_{i,L}\Ssup{1}, \quad E_0(\omega) = \overline{E}_0(\omega) := \be N_{T_0}(\omega) + \be_0 \1_{\omega_{T_0} \neq \emptyset}.$$ \label{E:saturation}
	\end{enumerate}
	The non-degeneracy assumption \ref{E:non-degenerate}, is required to ensure a non-degenerate Hamiltonian \ref{H:nondegenerate}. If we assume \ref{E:localF} then the Hamiltonian is finite range, \ref{H:finiterange}, with a range of $r+2\sqrt{d}\delta$. Furthermore, if we assume \ref{E:stable} then the Hamiltonian is stable, \ref{H:stability}. The assumption \ref{E:tame} is a technical assumption needed in order to prove Lemma~\ref{lem: pression_inde_bord}. The saturation property, \ref{E:saturation}, is one of the main ingredients needed to prove the first-order phase transition. Here we assume that in the setting where the configuration is homogeneous the energy depends only on the number of points inside the tiles. One can show that if $\overline{E}_0$ depends only on the number of points in $T_0$, then it must take the form described in \ref{E:saturation}, other expression is not possible. Now let us provide examples of Hamiltonian that satisfy these assumptions and especially the saturation property. 
	
	\begin{enumerate}
		\item {\bf K-nearest neighbour Strauss interaction}. It is an example of pairwise interaction with nearest neighbours \cite{geyer1999likelihood}. Let $i \in \N$, given a configuration $\omega \in \Omega_f$, a point $y \in \omega$ is the $i$-th neighbour of $x \in \omega$ if $\# B(x,|x-y|) \cap \omega = i$ and we denote by $y = v_i(x, \omega)$. If there are several points at the same distance to $x$ we can list these points using the lexicographical order on the cartesian coordinate. We consider $K \geq 1$, $R>0$ and $A\geq 0$, for any configuration $\omega \in \Omega_f$ the Hamiltonian is given by,
		\begin{equation}
			 H(\omega) = \begin{cases}
				\sum_{x \in \omega} \sum_{i=1}^{\min(K, N(\omega)-1)} A \1_{|x-v_i(x,\omega)| \leq R} & \text{if } N(\omega) \geq 2, \\
				0 & \text{otherwise}.
			\end{cases}
		\end{equation}
		For this interaction, a natural way to assign energy to each tile is, for any configuration $\omega \in \Omega_f$,
		\begin{equation}
			E_0(\omega) = \begin{cases}
				\sum_{x \in \omega_{T_0}} \sum_{i=1}^{\min(K, N(\omega)-1)} A \1_{|x-v_i(x,\omega)| \leq R} & \text{if } N(\omega) \geq 2, \\
				0 & \text{otherwise}.
				\end{cases}
		\end{equation}
		By definition, $E_0$ satisfies \ref{E:non-degenerate} and \refeq{E:localF}. Furthermore, for any configuration $\omega \in \Omega_f$ we have
		\begin{equation}
			0 \leq E_0(\omega) \leq A K N(\omega_{T_0}).
		\end{equation}
		Therefore, $E_0$ satisfies \ref{E:stable} and \ref{E:tame}. We can fix $L=R$ and $\delta$ small enough such that for any $x \in T_0, \# \{i \in \Z^d, T_i \subset B(x,R) \} \geq K+1$. As a consequence, for any homogeneous configurations around $T_0$, $\omega \in \Omega_{i,L}\Ssup{0} \cup \Omega_{i,L}\Ssup{1}$, 
		\begin{equation}
			E_0 (\omega) = A K N(\omega_{T_0}) =: \overline{E}_0(\omega),
		\end{equation}
		and thus $E_0$ satisfies the saturation property, \ref{E:saturation}.
		\item {\bf Quermass interaction with random bounded radii}. It is a classical morphologic interaction \cite{Kendall}. We consider the mark space to be $M = [R_1, R_2]$ and let $\theta_i \in \R$ for $i \in [0,d]$. For any configuration, $\omega \in \Omega_f$, the Hamiltonian is given by,
		\begin{equation}
			H(\omega) = \sum_{i =0}^d \theta_i M_i^d(L(\omega)),
		\end{equation} 
		where $M_i^d$ is the $i$-th Minkowski functional and $L(\omega)$ is the halo of the configuration, i.e.
		\begin{equation}
			L(\omega) = \bigcup_{(x,r) \in \omega} B(x,r).
		\end{equation}
		In particular, $M_d^d$ is the Lebesgue measure, $M_{d-1}^d$ is the $d_1$ Hausdorff measure of the surface and $M_0^d$ is the Euler-Poincaré characteristic. When $\theta_i = 0$ for $i = 0, \dots, d-1$ and $\theta_d > 0$, the Quermass interaction corresponds to the one component Widom-Rowlinson model, \cite{WidomRowlinson}. In dimension $d \leq 2$, the Quermass interaction satisfies our assumption on the Hamiltonian, especially the stability assumption \ref{H:stability}, \cite{Kendall}. In higher dimension $(d\geq 3)$, the Quermass interaction might not be stable, see \cite{Kendall} for further details. 
		
		We call a facet $F$ any non-empty intersection of closed tiles $\left(\overline{T}_i\right)_{i \in \Z^d}$. We denote by $\Fcal$ the set of all facets and $\Fcal_0 := \{ F \in \Fcal, F \cap T_0 \neq \emptyset\}$. Therefore, by additivity property of the Minkowski functionals the natural way to assign energy to each tile is, for any configuration $\omega \in \Omega_f$,
		\begin{equation}
			E_0(\omega) = \sum_{i=0}^d \theta_i M_{i,0}^d(L(\omega)), 
		\end{equation}
		where for any subset $A \in \R^d$,
		\begin{equation*}
			M_{i,0}^d(A) = \sum_{k=i}^d  \sum_{\substack{F \in \mathcal{F}_0 \\ \dim(F)=k}} (-1)^{d-k} M_i^d(A \cap F).
		\end{equation*}
		By definition $E_0$ satisfies \ref{E:non-degenerate} and since the radii of each particle is bounded it also satisfies \ref{E:localF}. In order to verify \ref{E:stable} and \ref{E:tame} we will consider that $\theta_i=0$ for $i=0,\dots, d-2$ when $d\geq3$ and for $\theta_i \in \R$ for $i=0,\dots,d$ when $d\leq 2$. Indeed, at any dimension we have that for any facet $F \in \Fcal_0$ and any configuration $\omega \in \Omega_f$ we have,
		\begin{gather}
			0 \leq M_{d,0}^d(L(\omega ) \cap F) \leq N_{T_0 \oplus  B(0,R_2)} M_{d}^d(T_0), \\
			0\leq M_{d-1,0}^d(L(\omega ) \cap F) \leq N_{T_0 \oplus B(0,R_2)}(\omega) M_{d-1}^d (B(0,R_2)).
		\end{gather}
		Now we consider that we are in dimension 2 and look at the contribution of the Euler-Poincaré characteristic to the energy of a tile. We have for facets of dimension 0, $M_{0}^2(L(\omega) \cap F ) \in \{0,1\}$. Then for facets of dimension 1, we have that $L(\omega) \cap F$ has no holes and therefore $ M_{0}^2(L(\omega) \cap F ) = N_{cc} (L(\omega) \cap F) \leq N_{T_0 \oplus B(0,R_2)}(\omega)$, where $N_{cc}$ counts the number of connected components. For the only facet of dimension 2, $\overline{T}_0$, we know that the number of holes is at most linear, \cite{Kendall}. Therefore, there is a constant $C_0>0$ such that 
		\begin{equation}
			|M_{0,0}^2(\omega)| \leq C_0 (1 + N_{T_0 \oplus B(0,R_2)}(\omega)).
		\end{equation}
		The same bound can be obtained for $M_{0,0}^1$ in dimension 1. As a consequence, under these restrictions the Quermass interaction satisfies \ref{E:stable} and \ref{E:tame}. Finally, if we fix $\delta \leq \frac{R_1}{\sqrt{d}}$ and $L>R_2$, we have that for any homogeneous configuration, $\omega \in \Omega_{i,L}\Ssup{0} \cup \Omega_{i,L}\Ssup{1}$,
		\begin{equation}
			E_0(\omega) = \delta^d \1_{\omega_{T_0} \neq \emptyset} =: \overline{E}_0(\omega),
		\end{equation}
		and therefore the Quermass interaction satisfies the saturation property \ref{E:saturation}.
 		\item {\bf Diluted pairwise interaction}. This example is new and introduced in this paper to provide an saturated approximation of the pairwise interaction which is not saturated. Let $R>0$ and $\phi$ be a radial pair potential such that $r^{d-1}\phi \in L^1$ and has a compact support. We define $R_1 = \sup \{ r \in \R_+,\phi(r)>0\}$ and $R_2 >0$ such that for $r>R_2$, $\phi(r)=0$.  For any configuration, $\omega \in \Omega_f$, the Hamiltonian is given by,
 		\begin{equation}
 			H(\omega) = \iint_{L_R(\omega)^2} \phi(|x-y|) \d x \d y,
 		\end{equation}
 		where
 		\begin{equation*}
 			L_R(\omega) = \bigcup_{x \in \omega} B(x,R).
 		\end{equation*}
 		This Hamiltonian is an approximation of the usual pairwise interaction. Indeed, when rescaled properly and having scale of dilution, $R$, going to $0$ we obtain the pairwise interaction. The way we assign energy to each tile is, for any configuration $\omega \in \Omega_f$,
 		\begin{equation}
 			E_0(\omega) = \int_{L_R(\omega) \cap T_0} \int_{L_R(\omega)} \phi(|x-y|) \d x \d y.
 		\end{equation}
 		By definition $E_0$ satisfies \ref{E:non-degenerate}. Since $\phi$ has a compact support therefore $E_0$ satisfies \ref{E:localF}. We have for any configuration $\omega \in \Omega_f$,
 		\begin{equation}
 			|E_0(\omega)| \leq  \delta^d \int_{\R^d} \left|\phi(|x|)\right| \d x,
 		\end{equation}
 		and therefore $E_0$ satisfies \ref{E:stable} and \ref{E:tame}. Finally, if we fix $\delta \leq \frac{R}{\sqrt{d}}$ and $L > R_2+ 2\sqrt{d}\delta$, we have that for any homogeneous configuration, $\omega \in \Omega_{i,L}\Ssup{0} \cup \Omega_{i,L}\Ssup{1}$,
 		\begin{equation}
 			E_0(\omega) = \delta^d C_\phi \1_{\omega_{T_0} \neq \emptyset} =: \overline{E}_0(\omega) \quad \text{where } C_\phi = \int_{\R^d} \phi(|x|) \d x,
 		\end{equation}
 		and therefore diluted pairwise interaction satisfies the saturation property \ref{E:saturation}.
	\end{enumerate}

	\subsection{Gibbs marked point process}

	Now we introduce the concept of (infinite-volume) Gibbs measures.
	\begin{definition}
		A probability measure $P$ on $\Omega$ is a Gibbs measure for the Hamiltonian $H$, the activity $z >0$ and the inverse temperature $\beta \geq 0$ if $P$ is stationary in space and if for any $\Delta \in  \mathcal{B}_b(\R^d)$ and any bounded positive measurable function $f : \Omega \rightarrow \R$,
		\begin{equation}\label{eq_dlr}
			\int f(\omega) P(\d\omega) = \int \int \frac{1}{Z_\Delta^{z,\beta}(\omega_{\Delta^c})} f(\omega_\Delta' \cup \omega_{\Delta^c}) e^{-\beta H_\Delta(\omega_\Delta' \cup \omega_{\Delta^c})} \Pi_\Delta^z(\d\omega'_\Delta) P(\d\omega)
		\end{equation} 
		where $Z_\Delta^{z,\beta}(\omega_{\Delta^c})$ is the partition function given the outer configuration $\omega_{\Delta^c}$
		\begin{equation*}
			Z_\Delta^{z,\beta}(\omega_{\Delta^c}) = \int e^{-\beta H_\Delta(\omega_\Delta' \cup \omega_{\Delta^c})} \Pi_\Delta^z(\d\omega').
		\end{equation*}
		We denote by $\Gcal(H, z, \beta)$ the set of all Gibbs measures. 
	\end{definition}
	The equations \ref{eq_dlr} are to referred as the DLR equations (for Dobrushin, Lanford and Ruelle) and describe the system at equilibrium. The existence and uniqueness of Gibbs measures are challenging and natural questions. Under the assumptions \ref{H:nondegenerate}-\ref{H:finiterange} the existence of such Gibbs measures are ensured by \cite[Theorem 1]{minicours} or by \cite{roelly2020marked}. Non-uniqueness of Gibbs measures at some parameter $(z, \beta)$ exhibits the coexistence of several state of matter. Our strategy to prove non-uniqueness is, for some set of parameters $(z, \beta) \in \R_+^2$, to exhibit two Gibbs measures $\P^+, \P^- \in  \Gcal(H, z, \beta)$ such that
	\begin{equation}
		\rho(\P^+) := E_{\P^+}(N_{[0,1]^d}) > \rho(\P^-).
	\end{equation}
	We call this phenomenon a first-order phase transition. The measure of a jump in density between two pure phases is effective in detecting a liquid-gas phase transition. However it is not enough as one needs to prove that crystallisation and symmetry breaking does not occur. In relation with the Gibbs measures, the pressure, denoted by $\psi$, is a quantity of interest and it is defined as 
		\begin{equation}
		\psi (z, \beta) : = \lim\limits_{n \rightarrow +\infty} \frac{1}{|\Delta_n|} \ln(Z_{\Delta_n}^{z,\beta}),
	\end{equation}
	where  $(\Delta_n)$ is the sequence of the following boxes $[-n,n]^d$ and  $Z_{\Delta_n}^{z,\beta}$ is the partition function with free boundary condition (i.e.  $Z_{\Delta_n}^{z,\beta}(\emptyset)$). Under the assumption of finite range interaction, this limit always exists as a consequence of sub-additivity of the sequence $(\ln Z_{\Delta_n}^{z,\beta})_{n\in \N}$, see \cite[Lemma 1]{dereudre2016variational} for more detail. The regularity of the pressure is related to the phase transition phenomenon. In particular the first order phase transition is related to its non-differentiability for some parameters. As a result of the Pirogov-Sinaï-Zahradn\'ik method, we will prove that the pressure is non-differentiable for a critical activity and $\beta$ large enough. 
	\subsection{Contours}
	In order to state our main results, , it is necessary to introduce the notion of contours, as it plays a crucial role in the statement of our results. We define the following application 
	\begin{align*}
		\sigma :& \Omega_f \times \Z^d \rightarrow \{ 0, 1\} \\
		& (\omega, i) \mapsto \begin{cases}
			0 & \text{if } \omega_{T_i} =\emptyset \\
			1 & \text{otherwise}
		\end{cases}.
	\end{align*}
	In the following we use the notation $\sharp$ for either $0$ or $1$. Let $L>0$ and $\omega \in \Omega$, a site $i \in \Z^d$ is said to be $\sharp$-correct if for all sites $j$ such that $\delta \norm{i-j} \leq 2L$, we have $\sigma(\omega, j) = \sharp$. A site $i$ is non-correct when it fails to be $\sharp$-correct for any $\sharp \in \{0,1\}$. The set of all non-correct sites is denoted by $\overline{\Gamma}(\omega) $. We can partition $\overline{\Gamma}(\omega)$ into its maximum connected components that we denote by $\overline{\gamma}(\omega)$ and we call it the support of a contour. We define a contour as the pair $\gamma(\omega) := (\overline{\gamma}(\omega), (\sigma(\omega,i))_{i \in \overline{\gamma}(\omega)})$ and we denote by $\Gamma(\omega)$ the set of all contours that appear with the configuration $\omega$.
	\newline
	
	Furthermore, the number of connected components is finite and for any $\overline{\gamma}(\omega)$, since we are considering only finite configurations, the complementary set has a finite amount of maximum connected components and in particular we have only one unbounded connected component and we call it the exterior of $\overline{\gamma}(\omega)$ that we denote by $ext(\overline{\gamma}(\omega))$. A contour $\gamma(\omega)$ is said to be external when for any other contour $\gamma'(\omega)$, $\overline{\gamma}(\omega) \subset ext(\overline{\gamma}'(\omega))$ and we denote by $\Gamma_{ext}(\omega)$ the subset of $\Gamma(\omega)$ comprised of only external contours. Let $\Lambda \subset \Z^d$ and $L>0$, we define the exterior boundary~$\partial_{ext} \Lambda$ and the interior boundary~$\partial_{int} \Lambda$ of $\Lambda$ as
	\begin{align*}
		\partial_{ext} \Lambda = \{j \in \Lambda^c, \delta d_2(j, \Lambda) \leq 2L \}  \\
		\partial_{int} \Lambda = \{i \in \Lambda, \delta d_2(i, \Lambda^c) \leq 2L+\delta \},
	\end{align*}
	where $d_2$ is the Euclidean distance in $\R^d$. Thus for any  $\gamma(\omega)$ and any $A$ maximum connected component of $\overline{\gamma}(\omega)^c$, according to \cite[Lemma 7.23]{velenik} there is a unique $\sharp \in \{0,1\}$ such that for any $i \in \partial_{ext} A \cup \partial_{int}A, \sigma(\omega, i) = \sharp$ and we define $\Label(A) := \sharp$. Furthermore, we define the type of a contour $\gamma(\omega)$ as $\Type(\gamma(\omega)) := \Label(ext(\overline{\gamma}(\omega)))$. Finally, we call the interiors of a contour the sets
	\begin{equation*}
		\Int_\sharp \gamma(\omega) = \bigcup_{\substack{A \neq ext(\overline{\gamma}(\omega)) \\  \Label(A) = \sharp}} A  \quad  \text{ and } \quad \Int \gamma(\omega) = \Int_0 \gamma(\omega) \cup \Int_1 \gamma(\omega).
	\end{equation*}
	For $k \in \N$, we say that a contour $\gamma(\omega)$ is of class $k$ if $|\Int \gamma(\omega)| = k$. 
	\newline
	
	Up until now, we have considered only configurations that are achievable by a configuration, later we need the concept of abstract contours. We define an abstract contour as $\Gamma := \{ \gamma_i = (\overline{\gamma}_i, (\sharp_j)_{j \in \overline{\gamma_i}}) , i \in I  \subset \N\}$ for which  each contour $\gamma_i$ is achievable for some configuration $\omega_i$. We do not assume the global achievability. Moreover, $\Gamma$ is said to be geometrically compatible if for all $\{i,j\} \subset I$, $d_\infty(\overline{\gamma}_i, \overline{\gamma}_j) >1$, where $d_\infty$ is the infinite distance. For any $\Lambda \subset \Z^d$, we denote by $\Ccal\Ssup{\sharp}(\Lambda)$ the collection of geometrically compatible sets of contours of the type $\sharp$ such that $d_\infty(\overline{\gamma}_i, \Lambda^c)>1$. We allow the set $\Gamma = \{ (\emptyset, \emptyset)\}$ to belong to any collection $\Ccal\Ssup{\sharp}(\Lambda)$, it corresponds to the event where not a single contour appears in $\Lambda$. Furthermore, we denote by $\Ccal\Ssup{\sharp}_{ext}(\Lambda)$ the sub-collection of sets $\Gamma$ containing only external contours and by $\Ccal\Ssup{\sharp}_n(\Lambda)$ the sub-collection where for any $\gamma \in \Gamma$ we have $|\Int \gamma| \leq n$. 
	
	\begin{figure}[H]
		
		\subfloat[] {
			\begin{tikzpicture}[scale=0.8]\label{fig_a}
				\def\grillesize{21} 
				\def\cellsize{0.4cm}   
				
				\foreach \x/\y in {3/3, 3/4, 3/5, 4/6, 4/7, 3/8,
					5/9, 5/10, 5/11, 6/12, 5/13, 4/14, 4/15, 3/16, 3/17, 5/18, 4/18, 4/17, 4/16, 5/15, 5/14, 4/13, 5/12, 4/11, 4/10, 4/9, 4/8, 5/7, 5/6, 4/5, 4/4, 4/3, 4/2, 5/2, 5/3, 6/2, 6/3, 7/3, 7/4, 8/3, 8/4, 9/3, 9/4, 10/2, 10/3, 11/2, 12/2, 11/3, 12/3, 12/4, 13/3, 13/4, 14/3, 14/4, 15/3, 15/4, 16/2, 16/3, 17/2, 17/3, 17/4, 18/3, 18/4, 18/5, 17/5, 18/6, 17/6, 18/7, 17/7, 16/8, 17/8, 16/9, 17/9, 16/10, 17/10, 17/11, 18/11, 17/12, 18/12, 17/13, 18/13, 16/14, 17/14, 16/15, 17/15, 17/16, 18/16, 17/17, 18/17, 5/17, 6/17, 6/18, 7/16, 7/17, 8/16, 8/17, 9/16, 9/17, 10/17, 10/18, 11/17, 11/18, 12/17, 12/18, 13/16, 13/17, 14/16, 14/17, 15/17, 15/18, 16/16, 16/17, 11/9, 11/10, 11/11, 12/10, 12/11, 10/10}{
					\fill[gray!50] 
					({\x*\cellsize}, {\y*\cellsize})
					rectangle 
					({(\x+1)*\cellsize}, {(\y+1)*\cellsize});
				}
				
				\foreach \x/\y in {2/2, 3/2, 3/1, 4/1, 5/1, 6/1, 7/1, 7/2, 8/2, 9/2, 9/1, 10/1, 11/1, 12/1, 13/1, 13/2, 14/2, 15/2, 15/1, 16/1, 17/1, 18/1, 18/2, 19/2, 19/3, 19/4, 19/5, 19/6, 19/7, 19/8, 18/8, 18/9, 18/10, 19/10, 19/11, 19/12, 19/13, 19/14, 18/14, 18/15, 19/15, 19/16, 19/17, 19/18, 18/18, 17/18, 16/18, 16/19, 15/19, 14/19, 14/18, 13/18, 13/19, 12/19, 11/19, 10/19, 9/19, 9/18, 8/18, 7/18, 7/19, 6/19, 5/19, 4/19, 3/19, 3/18, 2/18, 2/17, 2/16, 2/15, 3/15, 3/14, 3/13, 3/12, 4/12, 3/11, 3/10, 3/9, 2/9, 2/8, 2/7, 3/7, 3/6, 2/6, 2/5, 2/4, 2/3 } {
					\fill[blue!50] 
					({\x*\cellsize}, {\y*\cellsize})
					rectangle 
					({(\x+1)*\cellsize}, {(\y+1)*\cellsize});
				}
				
				\foreach \x/\y in {6/4, 5/4, 5/5, 6/5, 6/6, 6/7, 6/8, 5/8, 6/9, 6/10, 6/11, 7/11, 7/12, 7/13, 6/13, 6/14, 6/15, 6/16, 5/16, 7/15, 8/15, 9/15, 10/15, 10/16, 11/16, 12/16, 12/15, 13/15, 14/15, 15/15, 15/16, 15/14, 15/13, 16/13, 16/12, 16/11, 15/11, 15/10, 15/9, 15/8, 15/7, 16/7, 16/6, 16/5, 16/4, 15/5, 14/5, 13/5, 12/5, 11/5, 11/4, 10/4, 10/5, 9/5, 8/5, 7/5} {
					\fill[red!50] 
					({\x*\cellsize}, {\y*\cellsize})
					rectangle 
					({(\x+1)*\cellsize}, {(\y+1)*\cellsize});
				}
				
				\foreach \x/\y in {10/9, 9/9, 9/10, 9/11, 10/11, 10/12, 11/12, 12/12, 13/12, 13/11, 13/10, 13/9, 12/9, 12/8, 11/8, 10/8} {
					\fill[red!50] 
					({\x*\cellsize}, {\y*\cellsize})
					rectangle 
					({(\x+1)*\cellsize}, {(\y+1)*\cellsize});
				}
				\foreach \x in {0,1,...,\grillesize} {
					\draw[black, thin] (0, \x*\cellsize) -- (\grillesize*\cellsize, \x*\cellsize);
					\draw[black, thin] (\x*\cellsize, 0) -- (\x*\cellsize, \grillesize*\cellsize);
				}
				\node[ font=\bfseries\Large] at (11.5*\cellsize,10.5*\cellsize) {$\gamma_2$};
				
				\node[font=\bfseries\Large] at (11.5*\cellsize,17.5*\cellsize) {$\gamma_1$};
				
				\node[font=\bfseries\Large] at (11.5*\cellsize,14.5*\cellsize) {$A$};
			\end{tikzpicture}
		}
		\quad
		\subfloat[] {
			\begin{tikzpicture}[scale=0.8]\label{fig_b}
				\def\grillesize{21} 
				\def\cellsize{0.4cm}   
				
				\foreach \x/\y in { 3/3, 3/4, 3/5, 4/6, 4/7, 3/8, 5/9, 5/10, 5/11, 6/12, 5/13, 4/14, 4/15, 3/16, 3/17, 5/18, 4/18, 4/17, 4/16, 5/15, 5/14, 4/13, 5/12, 4/11, 4/10, 4/9, 4/8, 5/7, 5/6, 4/5, 4/4, 4/3, 4/2, 5/2, 5/3, 6/2, 6/3, 7/3, 7/4, 8/3, 8/4, 9/3, 9/4, 10/2, 10/3, 11/2, 11/3, 12/2, 12/3, 12/4, 13/3, 13/4, 14/3, 14/4, 15/3, 15/4, 16/2, 16/3, 17/2, 17/3, 17/4, 18/3, 18/4, 18/5, 17/5, 18/6, 17/6, 18/7, 17/7, 16/8, 17/8, 16/9, 17/9, 16/10, 17/10, 17/11, 18/11, 17/12, 18/12, 17/13, 18/13, 16/14, 17/14, 16/15, 17/15, 17/16, 18/16, 17/17, 18/17, 5/17, 6/17, 6/18, 7/16, 7/17, 8/16, 8/17, 9/16, 9/17, 10/17, 10/18, 11/17, 11/18, 12/17, 12/18, 13/16, 13/17, 14/16, 14/17, 15/17, 15/18, 16/16, 16/17, 11/9, 11/10, 11/11, 12/10, 12/11, 10/10} {
					\fill[gray!50] 
					({\x*\cellsize}, {\y*\cellsize})
					rectangle 
					({(\x+1)*\cellsize}, {(\y+1)*\cellsize});
				}
				
				\foreach \x/\y in {2/2, 3/2, 3/1, 4/1, 5/1, 6/1, 7/1, 7/2, 8/2, 9/2, 9/1, 10/1, 11/1, 12/1, 13/1, 13/2, 14/2, 15/2, 15/1, 16/1, 17/1, 18/1, 18/2, 19/2, 19/3, 19/4, 19/5, 19/6, 19/7, 19/8, 18/8, 18/9, 18/10, 19/10, 19/11, 19/12, 19/13, 19/14, 18/14, 18/15, 19/15, 19/16, 19/17, 19/18, 18/18, 17/18, 16/18, 16/19, 15/19, 14/19, 14/18, 13/18, 13/19, 12/19, 11/19, 10/19, 9/19, 9/18, 8/18, 7/18, 7/19, 6/19, 5/19, 4/19, 3/19, 3/18, 2/18, 2/17, 2/16, 2/15, 3/15, 3/14, 3/13, 3/12, 4/12, 3/11, 3/10, 3/9, 2/9, 2/8, 2/7, 3/7, 3/6, 2/6, 2/5, 2/4, 2/3 } {
					\fill[blue!50]
					({\x*\cellsize}, {\y*\cellsize})
					rectangle 
					({(\x+1)*\cellsize}, {(\y+1)*\cellsize});
				}
				
				\foreach \x/\y in {6/4, 5/4, 5/5, 6/5, 6/6, 6/7, 6/8, 5/8, 6/9, 6/10, 6/11, 7/11, 7/12, 7/13, 6/13, 6/14, 6/15, 6/16, 5/16, 7/15, 8/15, 9/15, 10/15, 10/16, 11/16, 12/16, 12/15, 13/15, 14/15, 15/15, 15/16, 15/14, 15/13, 16/13, 16/12, 16/11, 15/11, 15/10, 15/9, 15/8, 15/7, 16/7, 16/6, 16/5, 16/4, 15/5, 14/5, 13/5, 12/5, 11/4, 10/4, 11/5, 10/5, 9/5, 8/5, 7/5} {
					\fill[red!50] 
					({\x*\cellsize}, {\y*\cellsize})
					rectangle 
					({(\x+1)*\cellsize}, {(\y+1)*\cellsize});
				}
				
				\foreach \x/\y in {10/9, 9/9, 9/10, 9/11, 10/11, 10/12, 11/12, 12/12, 13/12, 13/11, 13/10, 13/9, 12/9, 12/8, 11/8, 10/8} {
					\fill[blue!50] 
					({\x*\cellsize}, {\y*\cellsize})
					rectangle 
					({(\x+1)*\cellsize}, {(\y+1)*\cellsize});
				}
				\foreach \x in {0,1,...,\grillesize} {
					\draw[black, thin] (0, \x*\cellsize) -- (\grillesize*\cellsize, \x*\cellsize);
					\draw[black, thin] (\x*\cellsize, 0) -- (\x*\cellsize, \grillesize*\cellsize);
				}
				\node[ font=\bfseries\Large] at (11.5*\cellsize,10.5*\cellsize) {$\gamma_2$};
				
				\node[font=\bfseries\Large] at (11.5*\cellsize,17.5*\cellsize) {$\gamma_1$};
				
				\node[font=\bfseries\Large] at (11.5*\cellsize,14.5*\cellsize) {$A$};
				
			\end{tikzpicture} 
		}
		\caption{The contour corresponds to the grey areas, while the blue and red squares represent the tiles at the boundary of the contour where the spins are $\sharp$ and $1-\sharp$, respectively. In Figure \ref{fig_a}, the contour $\Gamma =\{\gamma_1, \gamma_2\}$ is achievable by some configuration $\omega$ because the label of $A$ matches for both $\gamma_1$ and $\gamma_2$. In contrast, in Figure \ref{fig_b}, the contour $\Gamma = \{\gamma_1, \gamma_2\}$ is not globally achievable by any configuration. In this case, $\Gamma \in \mathcal{C}\Ssup{\sharp}(\Lambda)$, where the types of $\gamma_1$ and $\gamma_2$ are the same, but the labels of $A$ are mismatched. (reproduced from \cite{renaudchanQuermass})}
	\end{figure}
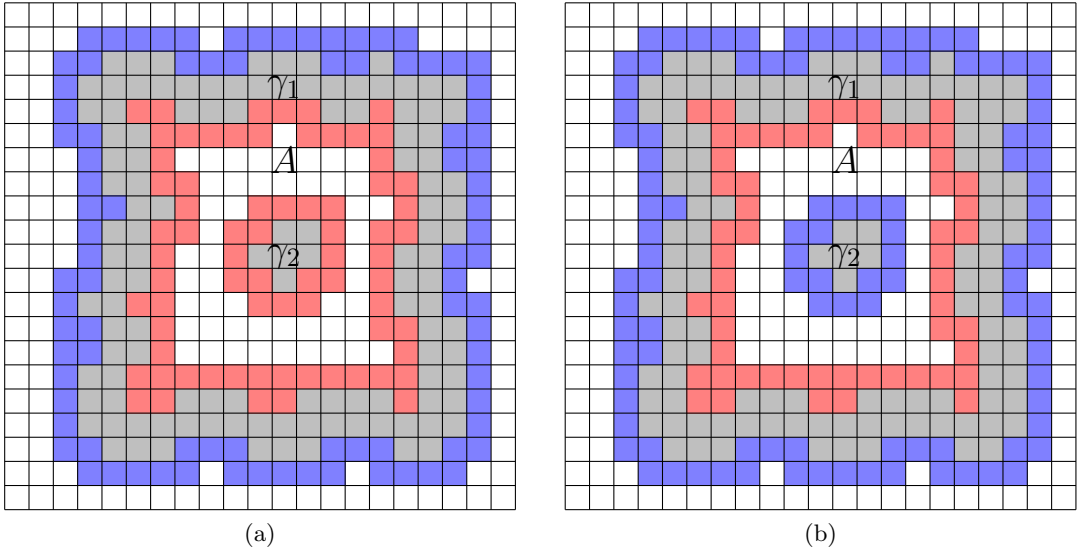
	
	\section{Results}\label{Section3}
	We now present our results on phase transition for saturated interactions. We begin with a general result on first-order phase transitions for saturated interactions for which the Hamiltonian satisfies a Peierls condition. The proof of this result will be given in the following section. Afterwards, we explore methods to verify a Peierls condition and some examples of phase transitions that can be obtained using this approach. 
	
	\begin{theorem}\label{thm: liquid_gaz1}
		Let $H$ be a Hamiltonian that satisfies \ref{H:stability}-\ref{H:finiterange} such that $E_0$  exists and satisfies \ref{E:non-degenerate}-\ref{E:saturation}. Furthermore, we suppose that the interaction satisfies a Peierls-like condition, i.e. there is $\be_+>0$ such that for any contour $\gamma$ and any configuration $\omega$ that achieves this contour, we have
		\begin{align}\label{cond: peierls_thm}
			E_{\overline{\gamma}} - \overline{E}_{\overline{\gamma}}(\omega) \geq \be_+ |\overline{\gamma}|.
		\end{align}		
		 For any $\beta>0$, we fix $z_\beta^-$ and $z_\beta^+$ as
		\begin{align}\label{def: z_beta_pm}
			z_\beta^- := \frac{e^{\beta \be}}{\delta^d} \ln\left(1 + e^{\beta \be_0 - 2}\right), \quad z_\beta^+ := \frac{e^{\beta \be}}{\delta^d} \ln\left(1 + e^{\beta \be_0 + 2}\right)
		\end{align} 
		and the open interval $O_\beta := (z_\beta^-, z_\beta^+)$. Then there is $\beta_c>0$ such that for $\beta \geq \beta_c$ there exists $z_\beta^c \in O_\beta$ for which a first-order phase transition occurs. More specifically, we have
		\begin{equation}\label{saut_de_dérivée_pression_au_point_critique}
			\frac{\partial \psi}{\partial z^+}(\beta, z_\beta^c) > \frac{\partial \psi}{\partial z^-}(\beta, z_\beta^c),
		\end{equation}		
		and we can find two Gibbs measures $\P^+, \P^- \in \mathcal{G}(H,z_\beta^c, \beta)$ such that
		\begin{equation}
			\rho(\P^+) = z + z \frac{\partial \psi}{\partial z^+}(z_\beta^c, \beta) \quad \text{and } \quad \rho(\P^-) = z + z\frac{\partial \psi}{\partial z^-}(z_\beta^c, \beta).
		\end{equation}
	\end{theorem}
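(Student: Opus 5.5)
We follow the Pirogov--Sinaï--Zahradník scheme as adapted to the Quermass model in \cite{renaudchanQuermass}. The first step is to rewrite finite-volume partition functions with $\sharp$-boundary conditions as polymer models of contours. Fix $\Lambda\subset\Z^d$ and a boundary condition that is $\sharp$-correct near $\partial_{ext}\Lambda$. By \ref{E:saturation}, on every $\sharp$-correct tile $i$ we have $E_i=\overline{E}_i$. The contribution of such a tile therefore factorises. For $\sharp=0$ the tile is empty, with weight $e^{-z\delta^d}$. For $\sharp=1$ the tile carries a Poisson number of points conditioned to be nonzero, with weight $e^{-z\delta^d}(e^{z\delta^d e^{-\beta\be}}-1)e^{-\beta\be_0}$.

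This gives the two "ground state free energies" per tile:
\begin{equation*}
f_0 = -z\delta^d, \qquad f_1=-z\delta^d+\ln\bigl(e^{z\delta^d e^{-\beta \be}}-1\bigr)-\beta\be_0 .
\end{equation*}
On the interval $O_\beta$ one checks that $f_1-f_0$ ranges over $[-2,2]$. Indeed, $z=z_\beta^\pm$ gives $\ln(e^{z\delta^d e^{-\beta\be}}-1)=\beta\be_0\pm2$, where we used the sign convention matching \eqref{def: z_beta_pm}.

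Summing over external contours yields
\begin{equation*}
Z^\sharp_\Lambda=\sum_{\Gamma\in\Ccal\Ssup{\sharp}_{ext}(\Lambda)} e^{f_\sharp|\Lambda\setminus \cup\overline\gamma\cup\Int\gamma|}\prod_{\gamma}\rho(\gamma)\, Z^0_{\Int_0\gamma}Z^1_{\Int_1\gamma}.
\end{equation*}
Here $\rho(\gamma)$ is the integral of $e^{-\beta E_{\overline\gamma}}$ over configurations in $\hat{\overline\gamma}$ compatible with the spin pattern of $\gamma$. The Peierls condition \eqref{cond: peierls_thm} gives $\rho(\gamma)\le e^{-\beta\be_+|\overline\gamma|}\cdot(\text{ground-state weight on }\overline\gamma)$. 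To obtain this, one has to control the integral of the $\overline{E}$-weights over $\hat{\overline\gamma}$; it is bounded by a constant to the power $|\overline\gamma|$, uniformly for $z\in O_\beta$, using \ref{E:stable}--\ref{E:tame}. Dividing by $Z^\sharp$ of the interiors, we then rewrite $Z^\sharp_\Lambda$ as a pure polymer gas with activities
\begin{equation*}
w^\sharp(\gamma)=\rho(\gamma)e^{-f_\sharp|\overline\gamma|}\frac{Z^{1-\sharp}_{\Int_{1-\sharp}\gamma}}{Z^{\sharp}_{\Int_{1-\sharp}\gamma}}.
\end{equation*}

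The core step is Zahradník's truncated-contour construction. We define truncated weights $\hat w^\sharp$ by cutting off the ratio $Z^{1-\sharp}/Z^\sharp$ at $e^{c|\overline\gamma|}$. We then prove by induction on the class $k$ of contours that, for $\beta\ge\beta_c$, the truncated weights satisfy $\hat w^\sharp(\gamma)\le e^{-(\beta\be_+/2)|\overline\gamma|}$. This bound makes the Kotecký--Preiss criterion hold, so cluster expansion defines truncated pressures $\hat\psi_\sharp(z)=f_\sharp/\delta^d+O(e^{-\beta\be_+/4})$. These are $C^1$ in $z$ with derivatives close to those of $f_\sharp$.

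Let $a=\min(\hat\psi_0,\hat\psi_1)$. The key lemma, which is the main obstacle, states that if $\hat\psi_\sharp(z)=\max$, then $w^\sharp=\hat w^\sharp$ for all contours. Equivalently, $|Z^{1-\sharp}_{\Lambda}/Z^\sharp_\Lambda|\le e^{c|\partial\Lambda|}$ whenever $\sharp$ is stable. To prove it we compare $Z^{1-\sharp}_\Lambda$ with its truncated version via the induction, using volume/surface estimates. We also need Lemma~\ref{lem: pression_inde_bord} (independence of the pressure from boundary conditions, which relies on \ref{E:tame}) to identify $\psi=\max(\hat\psi_0,\hat\psi_1)$.

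Since $\partial_z(\hat\psi_1-\hat\psi_0)$ is bounded below by a positive quantity of order $e^{-\beta\be}$ (up to small errors), and $\hat\psi_1-\hat\psi_0$ changes sign across $O_\beta$ (it is $\approx\pm2/\delta^d$ at the endpoints), there is a unique crossing $z_\beta^c\in O_\beta$. At this point the one-sided derivatives of $\psi$ are $\partial_z\hat\psi_1>\partial_z\hat\psi_0$, which gives \eqref{saut_de_dérivée_pression_au_point_critique}.

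Finally, at $z_\beta^c$ both phases are stable, so both $\sharp$-boundary polymer expansions converge. We build $\P^\pm$ as limits of the finite-volume Gibbs measures with $\sharp=1$ and $\sharp=0$ boundary conditions, stationarised by space averaging. Tightness follows from local bounds on the number of points, and these limits satisfy the DLR equations by finite range, cf.~\cite{minicours}. The intensity formula comes from differentiating $\ln Z^\sharp_\Lambda$ in $z$: we have $z\partial_z\ln Z^\sharp_\Lambda=\E^\sharp_\Lambda[N_{\hat\Lambda}]-z|\hat\Lambda|$. Here convexity of $\psi$ in $\ln z$ and the uniform convergence of the cluster expansion of $\hat\psi_\sharp$ let us interchange limit and derivative, giving $\rho(\P^\pm)=z+z\,\partial_{z^\pm}\psi$.
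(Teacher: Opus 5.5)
Your outline follows the same Pirogov--Sina\"{\i}--Zahradn\'{\i}k scheme as the paper: polymer representation with weights $\overline Z_\sharp^{-|\overline\gamma|}I_\gamma Z_{\Int_{\sharp^*}\gamma}\Ssup{\sharp^*}/Z_{\Int_{\sharp^*}\gamma}\Ssup{\sharp}$, Peierls bound on $I_\gamma$, truncation and cluster expansion, sign change of $G=\widehat\psi\Ssup{1}-\widehat\psi\Ssup{0}$ on $O_\beta$, $\psi=\max(\widehat\psi\Ssup{0},\widehat\psi\Ssup{1})$ via Lemma~\ref{lem: pression_inde_bord}, and densities from $\partial_z\ln Z_\Lambda\Ssup{\sharp}$ at $z_\beta^c$. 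The one place you deviate, the truncation, does not deliver what you use afterwards.

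Cutting the ratio off at $e^{c|\overline\gamma|}$ is Zahradn\'{\i}k's $\min$-truncation. Then $\widehat w_\gamma\Ssup{\sharp}$ has a kink wherever $Z_{\Int}\Ssup{1-\sharp}/Z_{\Int}\Ssup{\sharp}$ crosses $e^{c|\overline\gamma|}$, so the truncated pressures are a priori only Lipschitz in $z$, not $\mathcal{C}^1$. Yet ``$\mathcal{C}^1$ with derivative close to $\partial_z f_\sharp$'' is exactly what you invoke for the strict monotonicity of $G$ and for the jump of one-sided derivatives at $z_\beta^c$.

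The paper instead multiplies the weight by $\kappa\bigl((\widehat\psi_n\Ssup{\sharp^*}-\widehat\psi_n\Ssup{\sharp})\delta^d|\Int_{\sharp^*}\gamma|^{1/d}\bigr)$ with $\kappa\in\mathcal{C}^1$. It also carries the bound \eqref{bound_derivative_partition_function} on $\partial_z Z_\Lambda\Ssup{\sharp}$ through the induction. Together these give \eqref{bound_truncated_weight_derivative}, with $\norm{\kappa'}$ entering $D$.

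You can repair your version in one of two ways. Either smooth the cut-off in this way, or keep the $\min$, work with Lipschitz/a.e.\ derivative bounds for $G$, and use that on $[z_\beta^c,z_\beta^+)$ (resp.\ $(z_\beta^-,z_\beta^c]$) the stable phase's weights are untruncated. Either way you need an inductive derivative estimate like \eqref{bound_derivative_partition_function}, which your sketch of the key lemma does not mention.

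The same estimate is what identifies the densities. Convexity of $\psi$ in $\ln z$ only places $\lim|\Lambda|^{-1}\partial_z\ln Z_\Lambda\Ssup{\sharp}$ between $\partial_{z^-}\psi$ and $\partial_{z^+}\psi$ for both $\sharp$. The paper pins down each value using $Z_\Lambda\Ssup{\sharp}=\widehat Z_\Lambda\Ssup{\sharp}$ at $z_\beta^c$ and the surface bound \eqref{ineq: quant_perturbatif_derivee} on $\partial_z\Delta_\Lambda\Ssup{\sharp}$.

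Two smaller points.

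First, the integral of $e^{-\beta\overline E}$ over $\widehat{\overline\gamma}$ with prescribed spins factorises exactly as $\overline Z_0^{|\overline\gamma_0|}\overline Z_1^{|\overline\gamma_1|}$. The $\beta$-uniform factor $e^{2|\overline\gamma|}$ left after dividing by $\overline Z_\sharp^{|\overline\gamma|}$ comes from $|f_1-f_0|\le 2$ on $O_\beta$, not from \ref{E:stable}--\ref{E:tame}. A $\beta$-dependent constant there would compete with $e^{-\beta\be_+|\overline\gamma|}$; \ref{E:tame} is only needed for Lemma~\ref{lem: pression_inde_bord}.

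Second, $E_i$ for tiles near the edge of $\overline\gamma$ depends on points outside $\widehat{\overline\gamma}$, so your $\rho(\gamma)$ is not well defined as written. The paper puts $\overline E$ on $\partial^{-}\overline\gamma$ in $I_\gamma$, which is legitimate because those tiles are saturated for every configuration achieving $\gamma$.
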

	The proof of this result is given in Section \ref{sec: proof_thm1} and is inspired by the techniques developed by Pirogov, Sinaï and Zahradn\'ik for lattice systems. In addition to the saturation property, the Peierls condition is one of the main ingredients for proving a phase transition. However, this condition is often difficult to establish. We provide an easier way to verify it using what we call dominoes, which, given a contour, are couples of adjacent tiles with different occupation status. For any contour $\gamma$, we define the set of dominoes as 
	\begin{equation}\label{def: dominoes}
		D(\gamma) := \left\{ (i,j) \in \overline{\gamma}^2, \norm{i-j}_\infty = 1, \sharp_i = 1, \sharp_j = 0 \right\}.
	\end{equation}
	We know by \cite[Lemma 5]{renaudchanQuermass} that there is $r_D > 0$  such that for any contour $\gamma$ we have
	\begin{equation}
		|D(\gamma)| \geq r_D |\overline{\gamma}|.
	\end{equation}
	Therefore, if we assume that for any configuration $\omega \in \Omega_f$ we have $E_0(\omega) \geq \overline{E}_0(\omega)$ and that for any contour $\gamma(\omega)$ and any domino $(i,j) \in D(\gamma(\omega))$, we have $E_j(\omega) \geq \be_\emptyset >0$, we can derive the following inequality,
	\begin{equation}
		E_{\overline{\gamma}} - \overline{E}_{\overline{\gamma}}(\omega) \geq \be_\emptyset r_D |\overline{\gamma}|
	\end{equation} 
	and therefore, we obtain the following corollary where the Peierls condition is established through a tile-by-tile analysis of the energy.  We call this phenomenon the energy from the vacuum.
	\begin{corollary}
		Let $H$ be a Hamiltonian which satisfies \ref{H:stability}-\ref{H:finiterange} such that it has $E_0$ that satisfies \ref{E:non-degenerate}-\ref{E:saturation}. Furthermore, we suppose that for any configuration $\omega \in \Omega_f$ we have $E_0(\omega) \geq \overline{E}_0(\omega)$ and that there exists $\be_\emptyset>0$ such that for any contour $\gamma(\omega)$ and any domino $(i,j) \in D(\gamma(\omega))$ we have $E_j(\omega) \geq \be_\emptyset$. Then there is $\beta_c >0$ such that for $\beta \geq \beta_c$ there is $z_\beta^c >0$ for which a first-order phase transition occurs. 
	\end{corollary}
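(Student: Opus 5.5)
The plan is to deduce the corollary from Theorem~\ref{thm: liquid_gaz1}. It suffices to verify the Peierls-like condition \eqref{cond: peierls_thm} with constant $\be_+ := \be_\emptyset r_D > 0$. All other hypotheses of the theorem (\ref{H:stability}-\ref{H:finiterange} and \ref{E:non-degenerate}-\ref{E:saturation}) are assumed verbatim in the corollary. Once the Peierls condition holds, Theorem~\ref{thm: liquid_gaz1} gives $\beta_c>0$ and, for $\beta\ge\beta_c$, a critical activity $z_\beta^c\in O_\beta\subset(0,\infty)$ at which the pressure has a derivative jump and two Gibbs measures with different intensities exist. That is exactly the claimed first-order phase transition.

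Fix a contour $\gamma$ and a configuration $\omega$ achieving it, and write
\begin{equation*}
E_{\overline{\gamma}}(\omega) - \overline{E}_{\overline{\gamma}}(\omega) = \sum_{k\in\overline{\gamma}} \bigl(E_k(\omega) - \overline{E}_k(\omega)\bigr),
\end{equation*}
where $\overline{E}_k(\omega) = \overline{E}_0(\omega-\delta k)$. By the hypothesis $E_0\ge \overline{E}_0$ and translation, every summand is nonnegative. We may therefore drop all terms except those indexed by the set $J(\gamma):=\{j\in\overline{\gamma} : \exists i,\ (i,j)\in D(\gamma)\}$ of second coordinates of dominoes.

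For $j\in J(\gamma)$ we have $\sharp_j=0$, i.e. $\omega_{T_j}=\emptyset$. Hence $\overline{E}_j(\omega)=\be N_{T_j}(\omega)+\be_0\1_{\omega_{T_j}\ne\emptyset}=0$, and by hypothesis $E_j(\omega)\ge\be_\emptyset$. Each such term therefore contributes at least $\be_\emptyset$.

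The one point needing care is the counting: the Peierls bound needs $|J(\gamma)|$ rather than $|D(\gamma)|$, since one empty tile $j$ may pair with several occupied neighbours $i$. Each $j$ has at most $3^d-1$ neighbours in $\|\cdot\|_\infty$-distance $1$, so
\begin{equation*}
|J(\gamma)|\ge |D(\gamma)|/(3^d-1)\ge r_D|\overline{\gamma}|/(3^d-1),
\end{equation*}
using \cite[Lemma 5]{renaudchanQuermass}. This yields \eqref{cond: peierls_thm} with $\be_+=\be_\emptyset r_D/(3^d-1)$; the constant in the text above should be read with this harmless combinatorial factor, or with $r_D$ redefined to absorb it. Since only positivity of $\be_+$ matters, Theorem~\ref{thm: liquid_gaz1} applies and the corollary follows. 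There is no genuine obstacle; the only subtle points are this multiplicity count and checking that $\overline{E}_j$ vanishes on empty tiles, which is immediate from the form in \ref{E:saturation}.
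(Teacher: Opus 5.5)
Your proof is correct and follows the paper's own argument: drop the nonnegative terms $E_k-\overline{E}_k$, use $\overline{E}_j(\omega)=0$ and $E_j(\omega)\ge\be_\emptyset$ on the empty tile of each domino, invoke $|D(\gamma)|\ge r_D|\overline{\gamma}|$ from \cite[Lemma 5]{renaudchanQuermass}, and then apply Theorem~\ref{thm: liquid_gaz1}. Your factor $1/(3^d-1)$, which accounts for an empty tile shared by several dominoes, is a legitimate refinement of the paper's one-line derivation, which states the constant $\be_\emptyset r_D$ without addressing this multiplicity; it does not affect the conclusion, since only $\be_+>0$ matters.
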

	The saturation property alone is not sufficient to prove a phase transition. We have not been able to prove that the K-nearest neighbour Strauss interaction satisfies a Peierls condition and simulations seem to suggest that no such phenomenon occurs. Building on these general results on first-order phase transitions, we can derive several consequences for specific models, such as the Quermass interaction and the diluted pairwise interaction. For instance, \cite[Theorem 1]{renaudchanQuermass} follows directly from Theorem \ref{thm: liquid_gaz1} once we prove that it satisfies the Peierls condition for a set of parameters. Similarly, we obtain the following first-order phase transition result for the diluted pairwise interaction that we introduced in example 3 in subsection \ref{subsec: def_coarse_sat}. 
	\begin{theorem}\label{thm:fopt_diluted_pair}
		Let $\phi$ be a radial pair potential such that $r^{d-1}\phi \in L^1$ that satisfies
		\begin{equation}\label{cond: phi_global}
			C_d \int\limits_{B(0,R)} \phi^+ \d x > \left[ \left(\frac{R_1}{R} \right)^d-1 \right] \int\limits_{B(0,R_1) \backslash B(0,R)} \phi^+ \d x + \int\limits_{\R^d} \phi^- \d x,
		\end{equation}
		where
		\begin{equation*}
				C_d = \frac{\int_{0}^{\frac{\pi}{3}} \sin(\theta)^{d-2} \d \theta }{\int_{0}^{\pi} \sin(\theta)^{d-2} \d \theta }.
		\end{equation*}
		Then there is $\beta_c>0$ such that for $\beta > \beta_c$, there is $z_\beta^c>0$  for which a first-order phase transition occurs. Furthermore, we know that there is $c>0$ such that 
		\begin{equation}
			\left|z_\beta^c - \beta C_\phi \right| = O(e^{-c \beta}).
		\end{equation} 
		\end{theorem}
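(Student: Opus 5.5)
Since the diluted pairwise interaction already satisfies \ref{H:stability}--\ref{H:finiterange} and \ref{E:non-degenerate}--\ref{E:saturation} (example 3 of Subsection \ref{subsec: def_coarse_sat}, with $\delta \leq R/\sqrt d$ and $L > R_2 + 2\sqrt d \delta$, $\be = 0$, $\be_0 = \delta^d C_\phi$), the plan is to deduce the result from Theorem \ref{thm: liquid_gaz1}. The only thing left to check is the Peierls condition \eqref{cond: peierls_thm}. The location of the critical activity then follows from \eqref{def: z_beta_pm}: $z_\beta^c \in O_\beta$ with $z_\beta^\pm = \delta^{-d}\ln(1+e^{\beta\delta^d C_\phi \pm 2})$, so $z_\beta^\pm = \beta C_\phi \pm 2\delta^{-d} + O(e^{-c\beta})$. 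This interval has width $O(1)$, not $O(e^{-c\beta})$. To get the sharper statement I would go back into the proof of Theorem \ref{thm: liquid_gaz1}. The critical point is where the truncated pressures of the two phases coincide, $\delta^{-d}\beta\be_0 - z\delta^{-d}\cdot(\text{tile}) + O(e^{-c\beta})$ against $O(e^{-c\beta})$. Solving $\ln(\text{Poisson prob.\ tile nonempty}) - \beta\be_0 + z\delta^d = O(e^{-c\beta})$ gives $z\delta^d = \beta\delta^d C_\phi + O(e^{-c\beta})$, which yields $|z^c_\beta - \beta C_\phi| = O(e^{-c\beta})$.

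For the Peierls condition, $\overline E_{\overline\gamma}(\omega) = \delta^d C_\phi\,\#\{i\in\overline\gamma : \omega_{T_i}\neq\emptyset\}$, while
$$E_{\overline\gamma}(\omega) = \int_{L_R(\omega)\cap\hat{\overline\gamma}}\int_{L_R(\omega)}\phi(|x-y|)\,\d y\,\d x .$$
Every occupied tile is covered by $L_R(\omega)$ because $\delta\sqrt d\le R$. Writing $E_i - \overline E_i = \int_{L_R\cap T_i}\int (\1_{L_R}(y) - 1)\phi(|x-y|)\d y\,\d x + \int_{(L_R\cap T_i)\setminus\{\text{occupied}\}}\int_{L_R}\phi$, one must show that the missing-mass term is positive in a macroscopic amount. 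I would split $\phi = \phi^+ - \phi^-$. The negative part costs at most $\int\phi^-$ per unit covered volume. The positive part is lost wherever $y\notin L_R(\omega)$ with $|x-y|<R_1$. The geometric input is the boundary of the halo. Take $x \in L_R(\omega)$ near a boundary point of $L_R(\omega)$, a sphere $\partial B(z,R)$. All points $y$ in a spherical cap of half-angle $\pi/3$ around the outward normal, within distance $R$, lie outside every ball $B(z',R)$, $z'\in\omega$, that does not contain them. More precisely, the cone of angle $\pi/3$ is the set of directions in which the pair of radius-$R$ balls cannot cover, which is where the constant $C_d$ (the normalized solid angle of a cap of half-angle $\pi/3$) comes from. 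So the uncovered positive mass near a boundary point is at least $C_d\int_{B(0,R)}\phi^+$. The remaining positive mass in $B(0,R_1)\setminus B(0,R)$ can be uncovered or covered in the worst case. Comparing volumes, the covered region is at most $(R_1/R)^d$ times the ball of radius $R$, which produces the factor $[(R_1/R)^d - 1]$ in \eqref{cond: phi_global}. Condition \eqref{cond: phi_global} then gives a strict positive gain $\kappa>0$ per unit of "boundary'' volume.

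Next I would localize to dominoes, in the spirit of the Corollary. For each domino $(i,j)\in D(\gamma)$, with $T_i$ occupied and $T_j$ empty, the halo boundary passes within distance $O(\delta)$ of $T_j$. This produces a region of volume $\gtrsim\delta^d$ where the gain $\kappa$ is realized, so the excess on a neighbourhood of the domino is at least $c\kappa\delta^d$. Each tile is shared by boundedly many dominoes. Combining with $|D(\gamma)|\ge r_D|\overline\gamma|$ from \cite[Lemma 5]{renaudchanQuermass} gives $E_{\overline\gamma}-\overline E_{\overline\gamma}\ge\be_+|\overline\gamma|$. The main obstacle is this step: one must bound the excess energy from below tile by tile even though $E_i-\overline E_i$ can be negative on tiles where $\phi^-$ dominates or where coverage is partial. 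In addition, the accounting must be arranged so that each $x$ near the halo boundary is charged exactly once. I would first try to bound $\sum_{i\in\overline\gamma}(E_i-\overline E_i)$ directly as an integral over $x$ in $L_R(\omega)\cap\hat{\overline\gamma}$ within distance $R_1$ of $\partial L_R(\omega)$. This avoids per-tile signs. Then I would use that contours by definition contain a boundary of the halo of area comparable to $|\overline\gamma|\delta^{d-1}$.
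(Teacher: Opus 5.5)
Your reduction to Theorem~\ref{thm: liquid_gaz1} is correct, and so is your localisation of $z_\beta^c$, which matches the paper. Since $\eta(\tau,l_0)$ is exponentially small in $\beta$, the $\pm 2$ in $z_\beta^\pm$ can be replaced by $\pm a(\beta)$ with $a(\beta)=e^{-c\beta}$, and one still has $G(\hat z_\beta^-)<0<G(\hat z_\beta^+)$. The gap is in the Peierls estimate: the sign of the mechanism is reversed, and the step you flag as the main obstacle is left unresolved.

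\emph{The sign is reversed.} You bound from below the \emph{uncovered} positive mass near $\partial L_R(\omega)$ and count it as a gain. For an occupied tile one has $T_i\subset L_R(\omega)$, hence $E_i-\overline E_i(\omega)=-\int_{T_i}\int_{L_R(\omega)^c}\phi(|x-y|)\,\d y\,\d x$. So missing $\phi^+$ is exactly the \emph{cost} to be controlled, not a gain. The claimed lower bound is also false. Take a halo with a tiny hole: near its boundary the uncovered part of $B(x,R)$ is arbitrarily small.

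What is true, and what the paper uses, is the opposite inclusion. Let $x\in L_R(\omega)$ and let $z\in\omega$ with $|x-z|\le R$. Then the cone of $B(x,R)$ of half-angle $\pi/3$ pointing toward $z$ lies in $B(z,R)\subset L_R(\omega)$, because $t^2+s^2-ts\le R^2$ for $s,t\in[0,R]$. Hence every covered point has energy density $\int_{L_R(\omega)}\phi(|x-y|)\,\d y\ge C_d\int_{B(0,R)}\phi^+\,\d y-\int_{\R^d}\phi^-\,\d y$, which is positive by \eqref{cond: phi_global}. The gain is therefore this ``energy from the vacuum'' on covered but empty tiles, where $\overline E_i=0$.

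\emph{The balancing step.} For $R>R_1$ and $\delta$ small, occupied tiles see all of $B(x,R_1)$ covered, so there is no cost and the domino corollary applies tile by tile. For $R<R_1$ the cost is at most $\int_{B(0,R_1)\setminus B(0,\epsilon)}\phi^+$ per point lying at depth in $[\epsilon,R_1)$ inside the halo. The paper splits $L_R(\omega)\cap\widehat\gamma$ into the depth layers $[0,\epsilon)$, $[\epsilon,R_1)$ and $[R_1,\infty)$. It then proves the global comparison $V_{\omega,\gamma,R_1}\le (R_1/\epsilon)^d V_{\omega,\gamma,\epsilon}$ (Lemma~\ref{lem: geometrique_theta_epsilon}), so the gain in the thin layer pays for the loss in the thick one. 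This layer ratio, $(R_1/\epsilon)^d-1\to (R_1/R)^d-1$, is the real origin of the bracket in \eqref{cond: phi_global}; it is not a per-point comparison of ball volumes.

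The outcome is $E_{\overline\gamma}(\omega)\ge C_\phi\,|L_R^{-\epsilon}(\omega)\cap\widehat\gamma|$. Dominoes enter only at the very end: if $2\sqrt d\,\delta\le R-\epsilon$, the empty tile of each domino lies in $L_R^{-\epsilon}(\omega)$ and contributes an excess of $C_\phi\delta^d$.

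Your plan to charge the gain to halo-boundary volume near each domino cannot work. The empty tile of a domino can sit at depth close to $R$ inside the halo (surround it by occupied tiles), so $\partial L_R(\omega)$ need not come within $O(\delta)$ of it. Your instinct to integrate globally over $x$ rather than tile by tile is the right one. What is missing is the correct sign of the local bound together with the layer-volume comparison.
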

		The assumption $\eqref{cond: phi_global}$ is sufficient to obtain a Peierls condition, the proof will be given in Section \ref{sec: proof_thm2}. Furthermore, we can observe that for a pair potential that is repulsive and non-integrable at the origin we can always truncate near the origin such that condition \eqref{cond: phi_global} is verified. Therefore we have the following corollary. 
		\begin{corollary}\label{cor: lgpt cloud de pair troncqué}
			Let $\phi$ be a pair potential with finite support, such that it is non integrable and positive at the origin. For any $R>0$ there is $\epsilon \leq \min\{R_1, R\}$ for which we can build a truncated pair potential
			\begin{align*}
				\phi_\epsilon(r) := \begin{cases}
					\phi(\epsilon) & \text{if } r\leq \epsilon \\
					\phi(r) & \text{otherwise}
				\end{cases},
			\end{align*}  
			such that the diluted pairwise interaction for $\phi_\epsilon$ exhibits a first-order phase transition phenomenon for $\beta > \beta_c(\epsilon)>0$ and a critical activity $z_\beta^c>0$. Furthermore, we know that there is $c>0$ such that 
			\begin{equation}
				\left|z_\beta^c - \beta \int_{\R^d} \phi_\epsilon dy \right| = O(e^{-c \beta}).
			\end{equation} 
		\end{corollary}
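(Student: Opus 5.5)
The plan is to reduce the corollary to Theorem~\ref{thm:fopt_diluted_pair}. Fix $R>0$ and $\phi$ as in the statement, and apply that theorem to the truncated potential $\phi_\epsilon$. Hence it suffices to choose $\epsilon \le \min\{R_1,R\}$ such that $\phi_\epsilon$ satisfies the integrability hypothesis and condition \eqref{cond: phi_global}. Once this is done, the theorem yields $\beta_c(\epsilon)$, the critical activity $z_\beta^c$, and the estimate $|z_\beta^c-\beta C_{\phi_\epsilon}| = O(e^{-c\beta})$ with $C_{\phi_\epsilon}=\int_{\R^d}\phi_\epsilon\,\d y$. This is exactly the asymptotic claimed in the corollary.

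\emph{Integrability of $\phi_\epsilon$.} On $[0,\epsilon]$ the potential $\phi_\epsilon$ is the constant $\phi(\epsilon)$. On $(\epsilon,R_2]$ it equals $\phi$. Assuming $\phi$ is locally integrable away from the origin, as the notion of a pair potential with finite support presupposes, we get $r^{d-1}\phi_\epsilon\in L^1$ and compact support.

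\emph{Positive part of $\phi_\epsilon$.} Since $\phi$ is positive near the origin, $\phi(\epsilon)>0$ for small $\epsilon$. Moreover, $\phi_\epsilon^+ = \phi^+$ outside $B(0,\epsilon)$, and the quantity $R_1=\sup\{r:\phi(r)>0\}$ is unchanged by the truncation once $\epsilon<R_1$.

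\emph{Right-hand side of \eqref{cond: phi_global}.} Because $\epsilon\le R$, this side equals
\[
\Bigl[\bigl(\tfrac{R_1}{R}\bigr)^d-1\Bigr]\int_{B(0,R_1)\setminus B(0,R)}\phi^+\,\d x+\int_{\R^d}\phi_\epsilon^-\,\d x .
\]
I expect $\phi^-$ to be supported away from the origin, since $\phi>0$ near $0$. Then $\int\phi_\epsilon^-=\int\phi^-$ for small $\epsilon$, which is finite and independent of $\epsilon$. So the whole right-hand side is a finite constant $K$ that does not depend on $\epsilon$. If $R>R_1$, the first term is simply absent or nonpositive, which only helps.

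\emph{Left-hand side of \eqref{cond: phi_global}.} This side equals
\[
C_d\Bigl(\phi(\epsilon)|B(0,\epsilon)| + \int_{B(0,R)\setminus B(0,\epsilon)}\phi^+\,\d x\Bigr)\;\ge\; C_d\int_{B(0,R)\setminus B(0,\epsilon)}\phi^+\,\d x .
\]
Since $\phi$ is positive and non-integrable at the origin, $\int_{B(0,R)\setminus B(0,\epsilon)}\phi^+\to+\infty$ as $\epsilon\to0$, by monotone convergence. Hence we can choose $\epsilon$ small enough that the left-hand side exceeds $K$, and \eqref{cond: phi_global} holds for $\phi_\epsilon$.

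The only delicate point is making sure the truncation does not alter the right-hand side in an $\epsilon$-dependent way. For this I would use, and state explicitly, that $\phi$ is positive on a neighbourhood $(0,r_0)$ of the origin, and take $\epsilon<\min\{r_0,R_1,R\}$. Then $\phi_\epsilon^-=\phi^-$ and $R_1$ is unchanged, and the conclusion follows directly from Theorem~\ref{thm:fopt_diluted_pair}.
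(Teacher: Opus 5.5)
Your proposal is correct and follows the same route as the paper. The paper only remarks that truncating near the origin makes the left-hand side of \eqref{cond: phi_global} blow up while the right-hand side stays fixed, and then applies Theorem~\ref{thm:fopt_diluted_pair} to $\phi_\epsilon$. Your checks (that $R_1$ and $\int_{\R^d}\phi^-$ are unchanged for $\epsilon<\min\{r_0,R_1,R\}$, and that $r^{d-1}\phi_\epsilon\in L^1$) supply exactly the details that remark leaves implicit.
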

		This last corollary opens a new path to study phase transitions of pairwise interaction with strong short-range repulsion, such as the Lennard-Jones potential.

	\section{Proof of Theorem \ref{thm: liquid_gaz1}}\label{sec: proof_thm1}
	In this section, we decompose every step needed to prove the main results presented in this paper and it follows the structure of the proof presented in \cite{renaudchanQuermass}.
	
	\subsection{Existence of Gibbs point processes for different boundary conditions}
	Without loss of generality we may assume $\be = 0$. Indeed, we can replace $H$ by $\tilde{H} := H -\be N$ , which corresponds to a change of activity, $\tilde{z} := z e^{-\beta \be}$. For any $\Lambda \in \Z^d$, we denote by 
	\begin{equation*}
		\partial \Lambda = \{i \in \Lambda, \delta d_2(i, \Lambda^c) \leq L +\delta\} \quad \text{and} \quad \widehat{\Lambda}= \bigcup_{i \in \Lambda} T_i.
	\end{equation*}
	We are building two Gibbs point processes with different boundary conditions, a free boundary and a wired boundary. For any subset $\Lambda \subset \Z^d$, we define the following probability measures
	\begin{equation}
		\P_\Lambda\Ssup{\sharp} = \frac{1}{Z_\Lambda\Ssup{\sharp}} e^{-\beta(E_{\Lambda \setminus \partial\Lambda} + \overline{E}_{\partial\Lambda})} \mathbbm{1}_{(\sharp)_\Lambda} \Pi_{\widehat{\Lambda}}^z,
	\end{equation}
	 where the boundary condition $(\sharp)_\Lambda = \{ \omega \in \Omega, \forall i \in \partial_{int}\Lambda, \sigma(\omega,i) = \sharp \},$ and $Z_\Lambda\Ssup{\sharp}$ is the partition function and it is given by 
	\begin{align}
		Z_\Lambda\Ssup{\sharp}= \int e^{-\beta(E_{\Lambda \setminus \partial\Lambda} + \overline{E}_{\partial\Lambda})} \mathbbm{1}_{(\sharp)_\Lambda} \Pi_{\widehat{\Lambda}}^z. 
	\end{align}
	This probability measure is well defined. Indeed, with the event $E_\Lambda\Ssup{\sharp} = \{\omega \in \Omega, \forall i \in \Lambda,  \sigma(\omega, i)=\sharp\}$ we have 
	\begin{equation}\label{ineq: lower partition function}
		Z_\Lambda\Ssup{\sharp} \geq   e^{-\beta \be_0 \sharp |\Lambda|} \Pi_{\widehat{\Lambda}}^z(E_\Lambda\Ssup{\sharp}) = e^{-(\beta \be_0 \sharp + z \delta^d) |\Lambda|} \left( (1-\sharp) + z \delta^d \sharp \right)^{|\Lambda|} > 0. 
	\end{equation}
	Furthermore, according to \ref{E:saturation} and \ref{E:localF}, for any configuration $\omega \in \Omega_f$ such that $\omega \subset \widehat{\Lambda}$ we have
	\begin{align}
		 \overline{E}_{\partial\Lambda} + E_{\Lambda \setminus \partial\Lambda}(\omega) &\geq \min(0,\be_0) N_{\partial \Lambda}(\omega) +  E_{\Lambda \setminus \partial\Lambda}(\omega) \nonumber \\
		 &\geq \min(0,\be_0) N_{\partial \Lambda}(\omega) - c_S \sum_{i \in \Lambda \setminus \partial\Lambda} 1 + N_{T_i \oplus B(0,r)}(\omega) \nonumber \\
		  &\geq (\min(0,\be_0) -c_S|B(0,r+\sqrt{d}\delta)|) N(\omega) - c_S |\Lambda| . \label{ineq: stab energie}
	\end{align}
	As a consequence, we have that 
	\begin{equation}
		Z_\Lambda\Ssup{\sharp} \leq e^{\beta c_S |\Lambda|} \exp\left(z\delta^d |\Lambda| (e^{- \beta (\min(0,\be_0) -c_S |B(0,R+\sqrt{d}\delta)|)}-1)\right) < \infty.
 	\end{equation}
 	In the following proposition, we prove that $\P_\Lambda\Ssup{\sharp}$ satisfies the DLR equations in the bulk for the Hamiltonian $H$. 
 	
 	\begin{proposition}\label{prop: finite_volume_DLR}
 		Let $\Lambda \subset \Z^d$ and $\Delta \subset \widehat{\Lambda \setminus \partial_{int}\Lambda}$ such that $\lambda(\Delta)>0$. Then for $\P_\Lambda\Ssup{\sharp}$-a.s. all $\omega_{\Delta^c}$
 		\begin{equation}
 			\P_\Lambda\Ssup{\sharp} (\d\eta_\Delta| \omega_{\Delta^c}) = \frac{1}{Z_\Delta(\omega_{\Delta^c})} e^{-\beta H_\Delta(\eta_\Delta \cup \omega_{\Delta^c})} \Pi_\Delta^z(\d\eta),
 		\end{equation}
 		where $Z_\Delta(\omega_{\Delta^c})$ is the normalisation constant given by $Z_\Delta(\omega_{\Delta^c}) = \int e^{-\beta H_\Delta(\eta_\Delta \cup \omega_{\Delta^c})} \Pi_\Delta^z(\d\eta).$
 	\end{proposition}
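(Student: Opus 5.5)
The plan is the standard argument for finite-volume specifications. Write $\Pi_{\widehat\Lambda}^z = \Pi_\Delta^z \otimes \Pi_{\widehat\Lambda\setminus\Delta}^z$ and split the density of $\P_\Lambda\Ssup{\sharp}$ into a factor depending on $\eta_\Delta$ and a factor depending only on $\omega_{\Delta^c}$. Concretely, for $\omega = \eta_\Delta\cup\omega_{\widehat\Lambda\setminus\Delta}$ I would aim for the identity
\begin{equation*}
E_{\Lambda\setminus\partial\Lambda}(\omega) + \overline{E}_{\partial\Lambda}(\omega) = H_\Delta(\omega) + F(\omega_{\Delta^c}),
\end{equation*}
where $F$ does not depend on $\eta_\Delta$. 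I would also show that $\mathbbm{1}_{(\sharp)_\Lambda}(\omega)$ depends only on $\omega_{\Delta^c}$. The latter is immediate: $\Delta\subset\widehat{\Lambda\setminus\partial_{int}\Lambda}$, so no tile $T_i$ with $i\in\partial_{int}\Lambda$ meets $\Delta$, and $\sigma(\omega,i)$ for such $i$ is determined by $\omega_{\Delta^c}$. Once the identity holds, Fubini gives the conditional law as $e^{-\beta H_\Delta}\Pi_\Delta^z$ normalised. The normalisation $Z_\Delta(\omega_{\Delta^c})$ is positive and finite for a.e. $\omega_{\Delta^c}$ by the bounds \eqref{ineq: stab energie} and the upper bound on $Z_\Lambda\Ssup{\sharp}$ already shown.

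For the identity, use $H=\sum_i E_i$ together with the locality \ref{E:localF}. Only the tiles $i$ with $T_i\oplus B(0,r)$ meeting $\Delta$ have $E_i$ depending on $\eta_\Delta$. Hence $H_\Delta(\omega)=\sum_{i\in I_\Delta}\big(E_i(\omega)-E_i(\omega_{\Delta^c})\big)$, where $I_\Delta=\{i: d(T_i,\Delta)\le r\}$; the limit defining $H_\Delta$ stabilises once $r$-neighbourhoods are included. Similarly,
\begin{equation*}
E_{\Lambda\setminus\partial\Lambda}(\omega)+\overline{E}_{\partial\Lambda}(\omega) - \big(E_{\Lambda\setminus\partial\Lambda}+\overline{E}_{\partial\Lambda}\big)(\omega_{\Delta^c})
\end{equation*}
involves only tiles in $I_\Delta\cap\Lambda$. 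It remains to check two things. First, $I_\Delta\subset\Lambda$, so that no energy outside $\widehat\Lambda$ is missed. Second, on the tiles of $\partial\Lambda\cap I_\Delta$ the saturated energy $\overline{E}_i$ agrees with $E_i$, or $\overline{E}_i$ does not depend on $\eta_\Delta$ at all. The tiles of $\partial\Lambda$ do not meet $\Delta$, since $\partial\Lambda\subset\partial_{int}\Lambda$. Therefore $\overline{E}_i(\omega)=\be_0\1_{\omega_{T_i}\neq\emptyset}$ (recall $\be=0$) is independent of $\eta_\Delta$.

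The main obstacle is the boundary bookkeeping: a tile $i\in\partial\Lambda$ within distance $r$ of $\Delta$ would have its true energy $E_i$ depend on $\eta_\Delta$, while $\overline{E}_i$ does not. I would resolve this through the boundary condition. Such a tile $i$ has all tiles within $L$ inside $\partial_{int}\Lambda$, because $\partial_{int}$ has width $2L+\delta$ against the width $L+\delta$ of $\partial\Lambda$. On $(\sharp)_\Lambda$ the configuration is therefore $\sharp$-homogeneous around $T_i$, so by \ref{E:saturation} $E_i(\omega)=\overline{E}_i(\omega)$ for every $\eta_\Delta$. Applying this also to $\omega_{\Delta^c}$ shows that $E_i(\omega)-E_i(\omega_{\Delta^c})=0$ for these tiles. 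This yields both $H_\Delta(\omega)=\sum_{i\in I_\Delta\cap(\Lambda\setminus\partial\Lambda)}(E_i(\omega)-E_i(\omega_{\Delta^c}))$ and the desired factorisation on the event $(\sharp)_\Lambda$. I expect the main care to go into ensuring $r\le L$, or more generally that the relevant neighbourhoods fit inside the interior strip. If this fails, the fallback is to note that the locality of $E_0$ in \ref{E:localF}, combined with saturation on homogeneous configurations, forces $r\lesssim L$ in the relevant estimates, or to enlarge $\partial_{int}$ accordingly.
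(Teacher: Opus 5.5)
Your factorisation is the paper's argument. The paper writes $H_\Delta(\omega)=\sum_{i\in\Lambda_\Delta}\big(E_i(\omega)-E_i(\omega_{\Delta^c})\big)$ with $\Lambda_\Delta=\{i\in\Lambda:(T_i\oplus B(0,r))\cap\Delta\neq\emptyset\}$, which is your $I_\Delta\cap\Lambda$. It then uses \ref{E:localF} for the other tiles and reads off the conditional density. That proof, and yours, is complete only when every tile whose $r$-neighbourhood meets $\Delta$ lies in $\Lambda\setminus\partial\Lambda$. The paper uses this tacitly when it drops the tiles of $\Lambda^c$ from $H_\Delta$ and splits $E_{\Lambda\setminus\partial\Lambda}$ along $\Lambda_\Delta$. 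By the triangle inequality with the widths $L+\delta$ and $2L+\delta$, the centres of a tile of $\partial\Lambda$ and a tile of $\Lambda\setminus\partial_{int}\Lambda$ are more than $L$ apart. So $r+\sqrt d\,\delta\le L$ is sufficient, and under it your first two paragraphs are already a complete proof with no saturation argument needed.

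Your attempt to go beyond that regime has a false step. The width comparison only shows that those $L$-neighbours of $i\in\partial\Lambda$ lying in $\Lambda$ belong to $\partial_{int}\Lambda$. When $\delta d_2(i,\Lambda^c)\le L$, some neighbours lie in $\Lambda^c$, where $\P_\Lambda\Ssup{\sharp}$ has no points.
\begin{itemize}
\item For $\sharp=0$ this is harmless: every tile of $\partial\Lambda\cup\Lambda^c$ then has an empty $L$-neighbourhood, so your argument works for every $r$.
\item For $\sharp=1$ these neighbourhoods are mixed, as are those of the tiles of $\Lambda^c$ next to $\Lambda$. Then \ref{E:saturation} gives nothing, and your step $I_\Delta\subset\Lambda$ is never proved.
\end{itemize}
Better bookkeeping cannot repair this. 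Take $E_0=\overline{E}_0+\1\{\omega\notin\Omega_{0,L}\Ssup{0}\cup\Omega_{0,L}\Ssup{1}\}\,N_{T_0\oplus B(0,r)}(\omega)$ with $r\gg L$. This satisfies \ref{E:non-degenerate}--\ref{E:saturation}. Yet under $\P_\Lambda\Ssup{1}$ the conditional density of $\eta_\Delta$ is $e^{-\beta H_\Delta}$ times the non-constant factor $e^{\beta m N_\Delta(\eta)}$. Here $m\ge 1$ counts the mixed tiles of $\partial\Lambda\cup\Lambda^c$ whose $r$-neighbourhood contains $\Delta$: they enter $H_\Delta$ but not the density. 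So your fallback claim that locality and saturation force $r\lesssim L$ is wrong, since saturation only constrains $E_0$ on homogeneous configurations. The proposition needs an explicit range assumption. One option is $r+\sqrt d\,\delta\le L$, which can always be arranged by enlarging $L$, since \ref{E:saturation} at scale $L$ implies it at every larger scale. The other is to widen $\partial_{int}\Lambda$ to width $\max(2L+\delta,\,L+r+\sqrt d\,\delta)$; your saturation argument is then valid for the tiles of $\partial\Lambda$ that still see $\Delta$.
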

 	
 	\begin{proof}
 		We denote by $\Lambda_\Delta = \{ i \in \Lambda, T_i \oplus B(0,r) \cap \Delta \neq \emptyset\}$. By definition of $H_\Delta$, we have
 		\begin{align*}
 			H_\Delta (\omega) =  \sum_{i\in \Lambda_\Delta} E_i(\omega) - E_i(\omega_{\Delta^c}).
 		\end{align*}
 		For $i \in \Lambda \setminus \Lambda_\Delta$, by finite range property of $E_0$ we have
 		\begin{align*}
 			E_i(\omega) = E_i (\omega_{\Delta^c}).
 		\end{align*}
 		Therefore, we have
 		\begin{align*}
 			\P_\Lambda\Ssup{\sharp}(\d \omega) &= \frac{1}{Z_\Lambda\Ssup{\sharp}} e^{-\beta (\overline{E}_{\partial \Lambda} + E_{\Lambda \setminus \partial \Lambda})} \mathbbm{1}_{(\sharp)_\Lambda}(\omega) \Pi_{\widehat{\Lambda}}^z(\d\omega)\\
 			&= \frac{1}{Z_\Lambda\Ssup{\sharp}} e^{-\beta H_\Delta(\eta_\Delta \cup \omega_{\Delta^c})} e^{-\beta(\overline{E}_{\partial \Lambda} + E_{\Lambda \setminus (\partial \Lambda \cup \Lambda_\Delta)}-E_{\Lambda_\Delta})(\omega_{\Delta^c})}  \mathbbm{1}_{(\sharp)_\Lambda}(\omega_{\Delta^c}) \Pi_{\Delta}^z(\d\eta) \Pi_{\widehat{\Lambda}\setminus\Delta}^z(\d\omega).
 		\end{align*}
 		As a consequence, the unnormalised conditional density of $\P_\Lambda\Ssup{\sharp}(\d\eta_\Delta |\omega_{\Delta^c})$ with respect to $\Pi_\Delta^z$ is $\eta \mapsto e^{-\beta H_\Delta(\eta_\Delta \cup \omega_{\Delta^c})}$ and with the proposed normalisation we obtain the desired DLR equations for $\Delta \subset \widehat{\Lambda \setminus \partial_{int}\Lambda}$.
 	\end{proof}
 	In the following, we construct two Gibbs measure from the different boundaries condition and they are obtained as limit in the local convergence topology, which is the smallest topology on the space of probability measures on $\Omega$ such that any application $: \P \to \int f \d \P$ is continuous for any local tame function $f$. The measures $\P_\Lambda\Ssup{\sharp}$ are not stationary and therefore not suitable for the local convergence topology. However, for $\Lambda_n = \llbracket -n, n \rrbracket^d$ we can construct empirical fields such that for any test function $f$ we have 
 	\begin{equation}
 		\int f(\omega) \tilde{\P}_{\Lambda_n}\Ssup{\sharp}(\d \omega) = \frac{1}{|\widehat{\Lambda_n}|} \int_{\widehat{\Lambda}_n} \int_\Omega f(\omega + u) \check{\P}_{\Lambda_n}\Ssup{\sharp}(\d \omega) \d u, \; \text{where} \;  \check{\P}_{\Lambda_n}\Ssup{\sharp} = \bigotimes_{i \in \Z^d} \P_{\Lambda_n + 2ni}\Ssup{\sharp}.
 	\end{equation}
 	By construction $\tilde{\P}_{\Lambda_n}\Ssup{\sharp}$ is stationary. 
 	\begin{proposition}\label{prop: champ_empirique_DLR}
 		The empirical field $\left(\tilde{\P}_{\Lambda_n}\Ssup{\sharp}\right)_{n\in\N}$ has an accumulation point, for the local convergence topology, $\P\Ssup{\sharp}$ that is a Gibbs measure for the Hamiltonian $H$, activity $z$ and inverse temperature $\beta$.
 	\end{proposition}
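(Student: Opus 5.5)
The plan is the standard empirical-field argument in two parts. First we extract a limit point using a uniform entropy bound. Then we pass the DLR equations of Proposition~\ref{prop: finite_volume_DLR} to the limit using finite range.

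\textbf{Step 1: tightness in the local convergence topology.} We use the level sets of the specific entropy with respect to $\Pi^z$, which are compact in this topology (Georgii–Zessin). So it suffices to show that $I(\tilde{\P}_{\Lambda_n}\Ssup{\sharp} \mid \Pi^z)$ is uniformly bounded in $n$. By construction the specific entropy of the empirical field equals $\frac{1}{|\widehat{\Lambda}_n|} \mathcal{I}(\P_{\Lambda_n}\Ssup{\sharp} \mid \Pi^z_{\widehat{\Lambda}_n})$. Since the density is $e^{-\beta(E_{\Lambda\setminus\partial\Lambda}+\overline{E}_{\partial\Lambda})}\1_{(\sharp)_\Lambda}/Z\Ssup{\sharp}_\Lambda$, this relative entropy equals
\[
-\beta\, \E_{\P\Ssup{\sharp}_{\Lambda_n}}\!\left[E_{\Lambda_n\setminus\partial\Lambda_n}+\overline{E}_{\partial\Lambda_n}\right] - \ln Z\Ssup{\sharp}_{\Lambda_n}.
\]
The term $-\ln Z$ is at most $C|\Lambda_n|$ by the lower bound \eqref{ineq: lower partition function}. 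The energy term is bounded above by $\beta c|\Lambda_n| + \beta c' \E[N]$ using the stability estimate \eqref{ineq: stab energie}. To control $\E[N]/|\Lambda_n|$, we use the standard entropy inequality $\E[N]\le \frac{1}{t}\big(\mathcal{I} + \ln \E_\Pi e^{tN}\big)$. Choosing $t$ so that the linear-in-$N$ term is absorbed, we get $\mathcal{I}\le C|\Lambda_n|$. This yields an accumulation point $\P\Ssup{\sharp}$, which is stationary and has finite intensity.

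\textbf{Step 2: DLR equations in the limit.} Fix a bounded $\Delta$ and a bounded local function $f$. Let
\[
F(\omega)=\int f(\eta_\Delta\cup\omega_{\Delta^c})\, e^{-\beta H_\Delta(\eta_\Delta\cup\omega_{\Delta^c})}\,\Pi^z_\Delta(\d\eta)\,/\,Z_\Delta(\omega_{\Delta^c}).
\]
By the finite-range property \ref{H:finiterange}, $F$ is local, depending only on $\omega$ in $\Delta\oplus B(0,R)$. For the empirical field, we average over translations $u$ and split into two cases. When $\Delta+u$ and its $R$-neighbourhood lie inside some $\widehat{\Lambda_n+2ni}$ and away from $\partial_{int}$, Proposition~\ref{prop: finite_volume_DLR} (and independence across blocks of $\check{\P}$) gives $\int f\,\d\check{\P} = \int F\,\d\check{\P}$ exactly. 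The remaining translations form a boundary layer of relative volume $O(1/n)$. Hence $\int f \,\d\tilde{\P}_{\Lambda_n}\Ssup{\sharp} - \int F \,\d\tilde{\P}_{\Lambda_n}\Ssup{\sharp}\to 0$, and we take the limit along the subsequence.

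\textbf{Main obstacle.} The difficulty is that $F$ need not be tame or continuous: $H_\Delta$ can be $+\infty$, and $e^{-\beta H_\Delta}$ is unbounded above when $H_\Delta$ is negative. To handle this, we use \ref{E:stable} and \ref{E:tame}. By \ref{E:localF}, $H_\Delta$ is a finite sum of $E_i$'s, so $|H_\Delta(\eta\cup\omega)|\le C(1+N_{\Delta\oplus B}(\eta\cup\omega)^2)$ and $-H_\Delta\ge -C(1+N)$. This gives $Z_\Delta(\omega_{\Delta^c})\ge e^{-z|\Delta|}e^{-\beta c(1+N_{\Delta\oplus B}(\omega)^2)}$ and an upper bound on the numerator, so $|F|\le C e^{C N^2}$. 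This is not tame, so we truncate. We replace $F$ by $F\1_{N_{\Delta\oplus B}\le K}$, which is a bounded local function, and control the complement uniformly in $n$ via the entropy bound of Step 1. Since $F$ is a conditional expectation of the bounded $f$, we get $|F|\le\|f\|_\infty$ directly, so only $F\1_{N>K}$ needs to be handled, and it is bounded by $\|f\|\,\P(N>K)$. Continuity of $F$ in $\omega$ on $\{N\le K\}$ is not needed in the local topology, since only measurability and boundedness are required there. Letting $K\to\infty$ completes the argument.
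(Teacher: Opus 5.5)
Your proposal is correct and follows essentially the same route as the paper. A uniform bound $I \le C|\Lambda_n|$ on the relative entropy gives an accumulation point via Georgii's compactness results; the paper absorbs the term linear in $N$ by taking the entropy relative to $\Pi^{\xi}$ with $\xi = z e^{\beta A}$ instead of using the entropy inequality, which is an equivalent trick. The DLR equations then pass to the limit because Proposition~\ref{prop: finite_volume_DLR} applies to all translations outside a boundary layer of vanishing relative volume, and your truncation detour is unnecessary: as you note yourself, $F$ is local with $|F|\le\|f\|_\infty$, which is exactly what the paper uses to pass $\int F\,\d\tilde{\P}_{\Lambda_{n_k}}\Ssup{\sharp}$ to the limit.
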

 	
 	\begin{proof}
 		For $\xi >0$ and $\Lambda \subset \Z^d$, we compute the relative entropy of $\P_{\Lambda}\Ssup{\sharp}$ with respect to $\Pi_{\widehat{\Lambda}}^\xi$ we have
 		\begin{align}
 			I(\P_{\Lambda}\Ssup{\sharp} | \Pi_{\widehat{\Lambda}}^\xi) = \E_{\P_{\Lambda}\Ssup{\sharp}} \left(- \beta (E_{\Lambda \setminus \partial\Lambda} + \overline{E}_{\partial\Lambda}) + \ln\frac{z}{\xi} N_{\widehat{\Lambda}}\right) + (\xi - z) \delta^d|\Lambda| - \ln Z_\Lambda\Ssup{\sharp}.
 		\end{align}
 		Therefore, using \eqref{ineq: lower partition function} and \eqref{ineq: stab energie} we have
 		\begin{align}
 				I(\P_{\Lambda}\Ssup{\sharp} | \Pi_{\widehat{\Lambda}}^\xi) \leq \E_{\P_{\Lambda}\Ssup{\sharp}}  \left( (\beta A + \ln\frac{z}{\xi}) N_{\widehat{\Lambda}} \right)  + \left((\xi - z) \delta^d - c_S + \beta \be_0 \sharp - \ln \left((1-\sharp) + z \delta^d \sharp \right) \right)  |\Lambda| 
 		\end{align}
 		where $ A= \max(0,\be_0) + c_S|B(0,r+\sqrt{d}\delta)|$. Therefore, if we fix $\xi = ze^{\beta A}$, we have 
 		\begin{align}
 			\frac{1}{|\Lambda|}  I(\P_{\Lambda}\Ssup{\sharp} | \Pi_{\widehat{\Lambda}}^{ze^{\beta A}}) \leq z\delta^d(e^{\beta A} - 1)  - c_S + \beta \be_0 \sharp - \ln \left((1-\sharp) + z \delta^d \sharp \right). 
 		\end{align}
 		Therefore, according to \cite{georgii_livre}[Lemma 15.11 and Proposition 15.14] the empirical field $\left(\tilde{\P}_{\Lambda_n}\Ssup{\sharp}\right)_{n\in\N}$ has an accumulation point denoted by $\P\Ssup{\sharp}$ that is stationary by construction. We now prove that this limiting measure satisfies the DLR equations. Let $\Delta \in \Bcal_b(\R^d)$ and let $f$ be a bounded local function. We define a function $f_\Delta$ as 
 		\begin{align*}
 			f_\Delta : \omega \mapsto \int  f(\eta_\Delta \cup \omega_{\Delta^c})  \frac{e^{-\beta H_\Delta(\eta_\Delta \cup \omega_{\Delta^c} )}}{Z_\Delta(\omega_{\Delta^c})} \Pi_\Delta^z(\d \eta)
 		\end{align*}
 		where $Z_\Delta(\omega_{\Delta^c})$ is the normalisation constant given by $Z_\Delta(\omega_{\Delta^c}) = \int e^{-\beta H_\Delta(\eta_\Delta \cup \omega_{\Delta^c})} \Pi_\Delta^z(\d \eta)$. Since $H$ has finite range  \ref{H:finiterange}, then $f_\Delta$ is a bounded local function and, for $n$ large enough 
 		\begin{align*}
 			\int f_\Delta(\omega) \tilde{\P}_{\Lambda_n}\Ssup{\sharp}(\d\omega) &= \frac{1}{\delta^d |\Lambda_n|} \int\limits_{\widehat{\Lambda}_n} \int f_\Delta(\omega + u) \P_{\Lambda_n}\Ssup{\sharp}(\d\omega) \d u \\
 			& = \frac{1}{\delta^d |\Lambda_n|} \int\limits_{\widehat{\Lambda}_n} \iint f(\eta_\Delta \cup (\omega+u)_{\Delta^c}) \frac{e^{-\beta H_\Delta(\eta_\Delta \cup (\omega+u)_{\Delta^c})}}{Z_\Delta((\omega+u)_{\Delta^c})} \Pi_\Delta^z(\d\eta) \P_{\Lambda_n}\Ssup{\sharp}(\d\omega) \d u \\
 			&= \frac{1}{\delta^d |\Lambda_n|} \int\limits_{\widehat{\Lambda}_n} \iint f((\eta_{\Delta-u} \cup \omega_{(\Delta-u)^c})+u) \frac{e^{-\beta H_{\Delta-u}(\eta_{\Delta-u} \cup \omega_{(\Delta-u)^c})}}{Z_{\Delta-u}( \omega_{(\Delta-u)^c})} \Pi_{\Delta-u}^z(\d\eta) \P_{\Lambda_n}\Ssup{\sharp}(\d\omega) \d u.
 		\end{align*}
 		We define the subset $\widehat{\Lambda}_n^* = \{u \in \widehat{\Lambda}_n, \Delta-u \subset \widehat{\Lambda_n \setminus \partial_{int}\Lambda_n} \}$, when $u \in \widehat{\Lambda}_n^*$, we know that $\P_{\Lambda_n}\Ssup{\sharp}$ satisfies DLR equations on $\Delta-u$ by Proposition \ref{prop: finite_volume_DLR} and therefore
 		\begin{align*}
 			\iint f((\eta_{\Delta-u} \cup \omega_{(\Delta-u)^c})+u) \frac{e^{-\beta H_{\Delta-u}(\eta_{\Delta-u} \cup \omega_{(\Delta-u)^c})}}{Z_{\Delta-u}( \omega_{(\Delta-u)^c})} \Pi_{\Delta-u}^z(\d\eta) \P_{\Lambda_n}\Ssup{\sharp}(\d\omega) = \int f(\omega+u) \P_{\Lambda_n}\Ssup{\sharp}(\d\omega).
 		\end{align*}
 		Now we need to deal with the boundary terms, i.e.
 		\begin{gather*}
 			B_1 = \int\limits_{\widehat{\Lambda}_n \setminus \widehat{\Lambda}_n^*} 	\iint f((\eta_{\Delta-u} \cup \omega_{(\Delta-u)^c})+u) \frac{e^{-\beta H_{\Delta-u}(\eta_{\Delta-u} \cup \omega_{(\Delta-u)^c})}}{Z_{\Delta-u}( \omega_{(\Delta-u)^c})} \Pi_{\Delta-u}^z(\d\eta) \P_{\Lambda_n}\Ssup{\sharp}(\d\omega) \d u \\
 			B_2 = \int\limits_{\widehat{\Lambda}_n \setminus \widehat{\Lambda}_n^*} \int f(\omega+u) \P_{\Lambda_n}\Ssup{\sharp}(\d \omega).
 		\end{gather*}
 		Since $f$ is bounded we have that
 		\begin{align*}
 			|B_1| + |B_2| \leq 2M \lambda\left(\widehat{\Lambda}_n \setminus \widehat{\Lambda}_n^*\right).
 		\end{align*}
 		We can observe that $\lambda(\widehat{\Lambda}_n^*)$ is equivalent to $\lambda(\widehat{\Lambda}_n)$ and thus, for some sub-sequence $n_k$, we have
 		\begin{align*}
 			\int f_\Delta(\omega) \P\Ssup{\sharp}(\d\omega) &= \lim\limits_{k \to +\infty} \frac{1}{\delta^d |\Lambda_{n_k}|} \int\limits_{\Lambda_{n_k}} f_\Delta(\omega) \tilde{\P}_{\Lambda_{n-k}}\Ssup{\sharp}(\d\omega) \\
 			&= \lim\limits_{k \to +\infty} \frac{1}{\delta^d |\Lambda_{n_k}|} \int\limits_{\Lambda_{n_k}} \int f(\tau_u(\omega)) \P_{\Lambda_{n_k}}\Ssup{\sharp} (\d \omega) \\
 			&= \lim\limits_{k \to +\infty} \int f(\omega) \tilde{\P}_{\Lambda_{n-k}}\Ssup{\sharp}(\d\omega)\\
 			&= \int f(\omega) \P\Ssup{\sharp}(\d\omega).
 		\end{align*}
 		Therefore, $\P\Ssup{\sharp}$ satisfies the DLR equations and is a Gibbs measure, $\P\Ssup{\sharp} \in \Gcal(H,z,\beta)$. 
 	\end{proof}
	We define the pressure for each boundary condition as 
	\begin{equation}
		\psi\Ssup{\sharp} := \lim\limits_{n \to \infty} \frac{1}{\delta^d |\Lambda_n|} \ln Z_{\Lambda_n}\Ssup{\sharp}.
	\end{equation}
	In the following lemma, we prove that the pressure does not depend on the boundary condition. 
	
	\begin{lemma}\label{lem: pression_inde_bord}
		For any $z>0$, $\beta \geq0$ we have 
		\begin{equation}\label{eq: pression_inde_bord}
			\psi = \psi\Ssup{0} = \psi\Ssup{1}.
		\end{equation}
	\end{lemma}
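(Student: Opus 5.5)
We compare each of $\ln Z_{\Lambda_n}\Ssup{\sharp}$ with the free-boundary partition function $\ln Z_{\widehat{\Lambda}_n}^{z,\beta}(\emptyset)$. The goal is to show that the two differ by $O(|\partial_{int}\Lambda_n|)=O(n^{d-1})$, which is negligible after dividing by $\delta^d|\Lambda_n|$. Since $\psi$ is defined through the boxes $[-n,n]^d$, a standard sandwich between $\widehat{\Lambda}_n$ and the boxes $[-n\delta,n\delta]^d$, using the sub-additivity from \cite[Lemma 1]{dereudre2016variational}, identifies the limit along $\widehat{\Lambda}_n$ with $\psi$. Writing $H(\omega)=\sum_i E_i(\omega)$ for $\omega\subset\widehat{\Lambda}_n$, the free Hamiltonian is $E_{\Lambda_n}$ plus the energies $E_i$ of tiles outside $\Lambda_n$ but within distance $r$. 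The wired/free Hamiltonian $E_{\Lambda\setminus\partial\Lambda}+\overline{E}_{\partial\Lambda}$ differs from it only through tiles in the boundary layer $\partial\Lambda_n\cup(\Lambda_n^c\cap(\Lambda_n\oplus B(0,r)))$. That layer has $O(n^{d-1})$ tiles, each seeing only points in $T_i\oplus B(0,r)$.

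\textbf{Upper bound $\psi\Ssup{\sharp}\le\psi$.} Drop the indicator $\mathbbm{1}_{(\sharp)_\Lambda}\le 1$. Then bound the boundary energies using \ref{E:stable} and \ref{E:tame}: $-\beta\overline{E}_{\partial\Lambda}\le\beta|\be_0||\partial\Lambda|$, while $E_i\le c_t(1+N_{T_i\oplus B(0,r)}^2)$ for the free-boundary tiles we add back. The quadratic term is what requires care. Apply Hölder/Cauchy–Schwarz under the Gibbs measure, or more simply compare with the Poisson measure on the thin boundary strip. Conditionally on the interior, the strip points contribute a factor $\int e^{\beta c_t\sum_i N_i^2}\,\d\Pi$, which is infinite. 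So instead we restrict: we bound $Z_{\Lambda_n}\Ssup{\sharp}$ above by the free partition function of the slightly smaller set $\Lambda_n\setminus\partial\Lambda_n$, times the Poisson contribution of the strip. The points in $\widehat{\partial\Lambda_n}$ contribute at most $e^{\beta|\be_0||\partial\Lambda_n|}\,e^{z\delta^d|\partial\Lambda_n|(e^{\beta A}-1)}$ by \eqref{ineq: stab energie}. The energies $E_i$, $i\in\Lambda\setminus\partial\Lambda$, that depend on strip points differ from the free energy of $\Lambda_n\setminus\partial\Lambda_n$ only for tiles within $r$ of the strip. There we need a lower bound for the free energy, which is \ref{E:stable} (linear, hence integrable against Poisson), together with an upper bound for $E_{\Lambda\setminus\partial\Lambda}$, which follows from the inequality \eqref{ineq: stab energie}. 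Each strip point thus costs at most a factor $e^{\beta c}$, and the strip contributes $e^{O(|\partial\Lambda_n|)}$.

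\textbf{Lower bound $\psi\Ssup{\sharp}\ge\psi$.} Restrict $Z_{\Lambda_n}\Ssup{\sharp}$ to configurations where the boundary layer $\partial\Lambda_n$ (width $>2L+r$) is exactly $\sharp$-homogeneous in the strongest sense. For $\sharp=0$ the layer is empty. For $\sharp=1$, each tile of the layer contains exactly one point, and the layer is thick enough that saturation \ref{E:saturation} applies to the tiles adjacent to the interior. Inside, take an arbitrary configuration on $\Lambda'_n:=\Lambda_n\setminus(\text{layer}\oplus\text{range})$. The event has Poisson probability $e^{-O(n^{d-1})}$ by \eqref{ineq: lower partition function}. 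On it, the energy of tiles in the intermediate zone is controlled from above by \ref{E:tame}. This is the delicate step: the quadratic bound is integrable only against the Gibbs weight, not against $e^{+}$. I would therefore also require the intermediate zone (of width $r$ around the layer, $O(n^{d-1})$ tiles) to be in a fixed homogeneous state, for example empty, and require the interior to be separated from the layer by this empty buffer. Then all energies in the zone are bounded by \ref{E:tame} with bounded $N$, and by locality \ref{E:localF} the interior energy equals the free energy $H$ of the interior configuration. This yields $Z_{\Lambda_n}\Ssup{\sharp}\ge e^{-O(n^{d-1})}Z^{z,\beta}_{\widehat{\Lambda'_n}}(\emptyset)$, and $|\Lambda_n\setminus\Lambda'_n|=O(n^{d-1})$ gives the claim.

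\textbf{Main obstacle.} The expected difficulty is the case $\sharp=1$ in the lower bound, where an empty buffer sits next to a full layer. The energies of the buffer and layer tiles are then not saturated, and only \ref{E:tame} bounds them. This is fine because we impose exactly one point per layer tile, so $N_{T_i\oplus B(0,r)}$ is deterministically bounded. The analogous boundary issue in the upper bound is handled by the linear stability \ref{E:stable}.
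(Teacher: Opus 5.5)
There is a genuine gap in your upper bound $\psi^{(\sharp)}\le\psi$. Your lower bound $\psi^{(\sharp)}\ge\psi$ (homogeneous frame, empty buffer, arbitrary interior) is essentially sound. It only needs the buffer to be wide enough, roughly $2r$ plus a tile diameter, so that no tile's $r$-neighbourhood meets both the frame and the interior; otherwise such a tile sees an unbounded number of interior points and \ref{E:tame} no longer gives a bounded cost.

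Write $\omega=\omega_{\mathrm{in}}\cup\omega_{\mathrm{strip}}$. To get $Z^{(\sharp)}_{\Lambda_n}\le e^{O(n^{d-1})}Z^{z,\beta}_{\widehat{\Lambda_n\setminus\partial\Lambda_n}}$ you need, pointwise,
$E_{\Lambda\setminus\partial\Lambda}(\omega)+\overline{E}_{\partial\Lambda}(\omega)\ge H(\omega_{\mathrm{in}})-c(\omega_{\mathrm{strip}})$, with $c$ depending on the strip only. After the far tiles cancel, this requires a lower bound on $E_i(\omega)$ and an \emph{upper} bound on $E_i(\omega_{\mathrm{in}})$ for every tile within range of the cut. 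That includes tiles of $\partial\Lambda_n$ and $\Lambda_n^c$, which enter $H(\omega_{\mathrm{in}})$.

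You assert the reverse directions: a lower bound on the free energy, and an upper bound on $E_{\Lambda\setminus\partial\Lambda}$ from \eqref{ineq: stab energie}, which is in fact a lower bound. Those directions would bound $Z^{(\sharp)}$ from below, not above. The only upper bound on $E_i(\omega_{\mathrm{in}})$ is \ref{E:tame}, which is quadratic in the number of \emph{interior} points within distance $r$ of the cut. Those points are integrated against $e^{-\beta H(\omega_{\mathrm{in}})}$, not against the Poisson law of the strip. So the factor $e^{\beta c_t\sum_i N_i^2}$, and even the linear stability term, neither factorises nor is controlled. The claim ``each strip point costs at most $e^{\beta c}$'' is therefore unjustified, and the obstruction you flagged yourself is not removed.

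For $\sharp=0$, dropping the indicator also throws away an exact identity. On $(0)_\Lambda$, every tile of $\partial\Lambda$ and of $\Lambda^c$ is at distance $>L$ from all occupied tiles, so by \ref{E:saturation} it carries zero energy. Hence $Z^{(0)}_\Lambda=e^{-z\delta^d|\partial_{int}\Lambda|}Z^{z,\beta}_{\widehat{\Lambda\setminus\partial_{int}\Lambda}}$, and $\psi^{(0)}=\psi$ follows directly.

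The missing idea is to obtain the upper bound on $Z^{(1)}$ as a \emph{lower} bound on the free partition function, so that only lower bounds are ever needed. The paper restricts $Z^{z,\beta}_{\widehat\Lambda_n}$ to the event $F_n$. On $F_n$ there is exactly one point in each tile of an outer frame $P_n\cup B_n$, and at least one point in each tile of the next frame $B_{n-2p}\cup P_{n-2p}$.

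Then every tile of $B_n$ and of $P_{n-2p}$ has only occupied tiles within distance $L$. By \ref{E:saturation} its free energy equals $\overline{E}$ exactly, which is precisely the energy used in $Z^{(1)}_{\Lambda_{n-2p}}$. The remaining outer tiles see a bounded number of points, so \ref{E:tame} suffices there.

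This gives $Z^{z,\beta}_{\widehat\Lambda_n}\ge e^{-O(n^{d-1})}Z^{(1)}_{\Lambda_{n-2p}}$, i.e. $\psi\ge\psi^{(1)}$. The cycle $\psi\ge\psi^{(1)}\ge\psi^{(0)}=\psi$ then closes, and your frame-plus-buffer construction is essentially the middle step. Saturation is indispensable exactly at this interface: those tiles see an unbounded number of inner points, and without \ref{E:saturation} you would be back to the quadratic bound.
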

	
	\begin{proof}
		We denote by $p = \max \left(\left \lceil\frac{L}{\delta} \right \rceil, \left \lceil\frac{r}{\delta} \right \rceil \right)$, $P_n = \Lambda_n \setminus \Lambda_{n-p}$ and $B_n = \Lambda_{n-p} \setminus \Lambda_{n-2p}$. Since $E_0$ satisfies \ref{E:localF}, for any configuration $\omega \subset \widehat{\Lambda_n}$ we have
		\begin{align*}
			H(\omega) = E_{\Lambda_{n+p}}(\omega).
		\end{align*}
		Therefore,
		\begin{align*}
			Z_{\widehat{\Lambda}_n}^{z,\beta} = \int e^{-\beta E_{\Lambda_{n+p}}(\omega)} \Pi_{\widehat{\Lambda}_n}^z(\d\omega).
		\end{align*} 
		We consider the event $F_n$ given by
		\begin{equation*}
			F_n = \bigcap_{i \in P_n \cup B_n} \{ N_{T_i}(\omega) = 1\} \cap \bigcap_{j \in  B_{n-2p} \cup P_{n-2p}} \{N_{T_j}(\omega) \geq 1\}.
		\end{equation*}
		For any $\omega \in F_n$ we have saturation for the tiles in $B_n$ and $P_{n-2p}$ and with it independence between the configurations in the bulk $\Lambda_{n-2p}$ and in the boundary $B_n \cup P_n$. As a consequence, we obtain the following
		\begin{align*}
			Z_{\widehat{\Lambda}_n}^{z,\beta} &\geq \int e^{-\beta (E_{\Lambda_{n+p} \setminus \Lambda_{n-p}} + \overline{E}_{B_n})} e^{-\beta (\overline{E}_{P_{n-2p}} +E_{\Lambda_{n-3p}})} \mathbbm{1}_{F_n}  \Pi_{\widehat{\Lambda}_n}^z(\d\omega) \\
			& \geq Z_{\Lambda_{n-2p}}^{(1)} \int e^{-\beta (E_{\Lambda_{n+p} \setminus \Lambda_{n-p}} + \overline{E}_{B_n})} \mathbbm{1}_{F_n} \Pi_{\widehat{B_n \cup P_n}}^z(\d\omega).
		\end{align*}
		Since $E_0$ satisfies \ref{E:tame} we have  
		\begin{align*}
			E_{\Lambda_{n+p}\setminus \Lambda_{n-p}} \leq c_t \sum_{i \in \Lambda_{n+p}\setminus\Lambda_{n-p}} 1 + N_{T_i \oplus B(0,r)}^2(\omega).
		\end{align*}
		For $\omega \in F_n$ and $i \in \Lambda_{n+p}\setminus\Lambda_{n-p}$ we have 
		\begin{align*}
			N_{T_i \oplus B(0,R)}(\omega) &\leq \#\{j \in \Z^d, T_i \cap (T_j \oplus B(0,r)) \neq \emptyset\} \\
			&\leq \frac{|B(0,r+\sqrt{d}\delta)|}{\delta^d}.
		\end{align*}
		Thus for any $\omega \in F_n$, 
		\begin{align*}
			E_{\Lambda_{n+p}\setminus \Lambda_{n-p}}(\omega) \leq c_t\left(1+ \frac{|B(0,r+\sqrt{d}\delta)|^2}{\delta^{2d}}\right)|\Lambda_{n+p}\setminus\Lambda_{n-p}|.
		\end{align*}
		Furthermore, we have for $\omega \in F_n$ 
		\begin{align*}
			\overline{E}_{B_n}(\omega) = \be_0 |B_n|.
		\end{align*}
		Therefore, there is a constant $c>0$ such that
		\begin{align*}
			\int e^{-\beta (E_{\Lambda_{n+p} \setminus \Lambda_{n-p}} + \overline{E}_{B_n})} \mathbbm{1}_{F_n} \Pi_{\widehat{B_n \cup P_n}}^z(\d\omega) \geq (z \delta^d e^{- (z\delta^d + \beta c)})^{  |\Lambda_{n+p} \setminus \Lambda_{n-2p}|}.
		\end{align*}
		As a consequence, we obtain
		\begin{align*}
			\frac{\ln Z_{\widehat{\Lambda}_n}^{z,\beta}}{\delta^d |\Lambda_n|} \geq \frac{|\Lambda_{n+p}\setminus\Lambda_{n-2p}|}{|\Lambda_n|} \frac{\ln(z\delta^d e^{-(z\delta^d+\beta c)})}{\delta^d} + \frac{|\Lambda_{n-2p}|}{|\Lambda_{n}|} \frac{\ln Z_{\Lambda_{n-2p}}\Ssup{1}}{\delta^d|\Lambda_{n-2p}|}.
		\end{align*}
		By taking the limit we get $\psi \geq \psi^{(1)}$. Now let us consider the event $E_n$ defined as 
		\begin{equation*}
			E_n = \bigcap_{i \in B_n \cup P_n} \{ N_{T_i}(\omega) = 1\} \cap \bigcap_{j \in \Lambda_{n-2p} \setminus \Lambda_{n-5p}} \{N_{T_j}(\omega) =0\}.
		\end{equation*}
		For any configuration $\omega \in E_n$ the tiles in $P_{n-3p}$ are saturated by the empty space and we retrieve the empty boundary condition on $\Lambda_{n-3p}$. Therefore, we have
		\begin{align*}
			Z_{\Lambda_n}^{(1)} &\geq \int e^{-\beta(\overline{E}_{P_{n}}+E_{\Lambda_{n-p}\setminus\Lambda_{n-3p}})} e^{-\beta ( \overline{E}_{P_{n-3p}} + E_{\Lambda_{n-4p}})}\mathbbm{1}_{E_n} \Pi_{\widehat{\Lambda}_n}^z(\d\omega) \\
			& \geq Z_{\Lambda_{n-3p}}^{(0)} \int e^{-\beta(\overline{E}_{P_{n}}+E_{\Lambda_{n-p}\setminus\Lambda_{n-3p}})} \mathbbm{1}_{E_n}(\omega) \Pi_{\Lambda_n \setminus \Lambda_{n-3p}}^z(\d\omega).
		\end{align*}
		With similar arguments we can show that there exists $c>0$ such that 
		\begin{align*}
			\int e^{-\beta(\overline{E}_{P_{n}}+E_{\Lambda_{n-p}\setminus\Lambda_{n-3p}})} \mathbbm{1}_{E_n} \Pi_{\Lambda_n \setminus \Lambda_{n-3p}}^z(\d\omega) \geq (z\delta^d e^{-(z\delta^d + \beta c)})^{|\Lambda_n \setminus \Lambda_{n-3p}|}.
		\end{align*}
		Therefore, we have $\psi^{(1)}\geq \psi^{(0)}$. Finally, let us recall that $(0)_{\Lambda_n} = \bigcap_{i \in P_n \cup B_n} \{ N_{T_i}(\omega) =0\}$ and therefore
		\begin{align*}
			Z_{\Lambda_n}^{(0)} &= \int e^{-\beta E_{\Lambda_{n-p}} } \mathbbm{1}_{(0)_{\Lambda_n}} \Pi_{\widehat{\Lambda}_n}^z(\d\omega) \\
			&= e^{-z \delta^d |P_n \cup B_n|} \int e^{-\beta H }  \Pi_{\widehat{\Lambda}_{n-2p}}^z(\d\omega) \\
			&= e^{-z \delta^d |P_n \cup B_n|} Z_{\widehat{\Lambda}_{n-p}}^{z,\beta}.
		\end{align*}
		As a result, we have $\psi^{(0)} = \psi$  which finishes the proof.
	\end{proof}
	
	\subsection{Polymer development}
	We have proved that a Gibbs measure can be obtained as limits of empirical fields for each boundary condition. We want to prove that these measures are different. In order to do that, we will compare the partition function of the model with boundary conditions, $Z_\Lambda\Ssup{\sharp}$, with the partition function of the model that saturates each tile, $\overline{Z}_\sharp^{|\Lambda|}$, where
	\begin{equation}
		\overline{Z}_\sharp = \int e^{-\beta \overline{E}_0(\omega)} \1_{\{\sigma(\omega,0)=\sharp\}} \Pi_{T_0}^z(\d \omega) =\begin{cases}
			e^{-z\delta^d} & \text{if  } \sharp = 0 \\
			e^{-\beta \be_0}(1- e^{-z \delta^d}) & \text{if  } \sharp = 1.
		\end{cases}
	\end{equation}
	For this purpose, we analyse the ratio of partition functions for any subset $\Lambda \subset \Z^d$ given by
	\begin{equation}
		\Phi_\Lambda\Ssup{\sharp} := \frac{Z_\Lambda\Ssup{\sharp}}{\overline{Z}_\sharp^{|\Lambda|}}.
	\end{equation}
	We define a new boundary for any $\Lambda \subset \Z^d$ 
	\begin{equation*}
		\partial^{\ssup{-}} \Lambda := \{ i \in \Lambda, \delta d_2(i,\Lambda^c) \leq L\}. 
	\end{equation*}
	For any configuration $\omega \in \Omega$ and any contour $\gamma(\omega)$, the tiles in $\partial^{\ssup{-}} \overline{\gamma}(\omega) \cup \partial \overline{\gamma}(\omega) $ are saturated. In the following proposition, we prove that this ratio can be written as a polymer development.
	\begin{proposition}\label{prop: polymer_dvt}
		For any $\Lambda \subset \Z^d$ finite and any $\sharp \in \{0,1\}$, we have 
		\begin{equation*}
			\Phi_\Lambda\Ssup{\sharp} = \sum_{\Gamma \in  \mathcal{C}\Ssup{\sharp}(\Lambda)}  \prod_{\gamma \in \Gamma} w_{\gamma}\Ssup{\sharp},
		\end{equation*}
		where $w_{\gamma}\Ssup{\sharp} = \overline{Z}_\sharp^{-|\overline{\gamma}|} I_{\gamma} \frac{Z_{\Int_{\sharp^*} \gamma}\Ssup{\sharp^*}}{Z_{\Int_{\sharp^*} \gamma}\Ssup{\sharp}}$, $\sharp^* := 1 - \sharp$ and  
		\begin{equation*}
			I_{\gamma} := \int e^{-\beta \left(E_{\overline{\gamma} \setminus \partial^-\overline{\gamma}}  + \overline{E}_{\partial^-\overline{\gamma}}\right)(\omega)} \1_{\{\forall i \in \overline{\gamma}, \sigma(\omega,i)=\sigma_i \}} \Pi_{\widehat{\gamma}}^z(\d\omega).
		\end{equation*}
		 The quantity, $w_{\gamma}\Ssup{\sharp}$, is called the weight of the contour $\gamma$.
	\end{proposition}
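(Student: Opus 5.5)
This is the standard Pirogov--Sinaï/Zahradník argument: induction on the size of $\Lambda$, peeling off external contours and recursively expanding their interiors.

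\textbf{Step 1: decompose by external contours.} Every $\omega \in (\sharp)_\Lambda$ is written according to its set of external contours $\Gamma_{ext}(\omega)$. This set belongs to $\Ccal\Ssup{\sharp}_{ext}(\Lambda)$: the boundary condition forces the type $\sharp$ and keeps the supports at distance $>1$ from $\Lambda^c$. So
\begin{equation*}
Z_\Lambda\Ssup{\sharp} = \sum_{\Gamma \in \Ccal\Ssup{\sharp}_{ext}(\Lambda)} \int e^{-\beta(E_{\Lambda\setminus\partial\Lambda}+\overline{E}_{\partial\Lambda})}\1_{(\sharp)_\Lambda}\1_{\{\Gamma_{ext}=\Gamma\}}\,\Pi^z_{\widehat{\Lambda}}(\d\omega).
\end{equation*}
Fix $\Gamma$. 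The region $\Lambda$ splits into four parts:
\begin{itemize}
\item the supports $\overline{\gamma}$, $\gamma \in \Gamma$;
\item the interiors $\Int_\sharp\gamma$;
\item the interiors $\Int_{\sharp^*}\gamma$;
\item the exterior region $\Lambda_{ext} = \Lambda \setminus \bigcup_\gamma(\overline{\gamma}\cup\Int\gamma)$.
\end{itemize}
Every site of $\Lambda_{ext}$ is $\sharp$-correct. By \ref{E:saturation} each tile there has energy $\overline{E}_i$, and its spin is $\sharp$. Its contribution therefore factorises as $\overline{Z}_\sharp^{|\Lambda_{ext}|}$.

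\textbf{Step 2: factorise across the pieces.} Sites in $\partial^-\overline{\gamma}$ and in the interior boundaries $\partial\Int_{\sharp'}\gamma$ are at distance $\le L$ from correct sites of a fixed label. Their $L$-neighbourhood is therefore homogeneous, so they are saturated and $E_i = \overline{E}_i$ there. Since $E_0$ is local with range $r$, and the boundary layers have width at least $\max(L,r)$ in tile units, the energy splits into independent pieces. The constraint on labels is the same (this uses the width $2L$ built into the definitions of $\partial_{int}$ and correctness). This is exactly how $\partial\Lambda$ and $\partial^-$ were designed.

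The integral then factorises as
\begin{equation*}
\overline{Z}_\sharp^{|\Lambda_{ext}|}\prod_{\gamma\in\Gamma} I_\gamma\, Z\Ssup{\sharp}_{\Int_\sharp\gamma}\, Z\Ssup{\sharp^*}_{\Int_{\sharp^*}\gamma}.
\end{equation*}
Here $Z\Ssup{\sharp'}_{\Int_{\sharp'}\gamma}$ is the partition function with boundary condition $\sharp'$ on each component. The hard part is checking carefully that the saturated boundary energies $\overline{E}$ used in $Z_A\Ssup{\sharp'}$ on $\partial A$, and in $I_\gamma$ on $\partial^-\overline{\gamma}$, exactly account for the true energies $E_i$ there. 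Each tile's energy must be counted once, with no overlap or gap between $\partial^-\overline{\gamma}$ and $\partial(\Int\gamma)$. The indicator conditions $\sigma(\omega,i)=\sigma_i$ on $\overline{\gamma}$, together with correctness outside, must also reproduce the event $\{\Gamma_{ext}=\Gamma\}$. I expect this bookkeeping, relying on \cite[Lemma 7.23]{velenik} for label consistency, to be the main obstacle.

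\textbf{Step 3: switch boundary conditions and recurse.} Multiply and divide by $Z\Ssup{\sharp}_{\Int_{\sharp^*}\gamma}$ and by $\overline{Z}_\sharp^{|\overline{\gamma}|}$. This gives
\begin{equation*}
\Phi_\Lambda\Ssup{\sharp} = \sum_{\Gamma\in\Ccal\Ssup{\sharp}_{ext}(\Lambda)}\prod_{\gamma\in\Gamma} w_\gamma\Ssup{\sharp}\,\Phi\Ssup{\sharp}_{\Int_\sharp\gamma}\,\Phi\Ssup{\sharp}_{\Int_{\sharp^*}\gamma},
\end{equation*}
using $|\Lambda| = |\Lambda_{ext}| + \sum_\gamma(|\overline{\gamma}|+|\Int\gamma|)$. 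All interiors are strictly smaller than $\Lambda$. By induction on $|\Lambda|$, starting from $\Phi_\emptyset = 1$ and from $\Lambda$ too small to contain a contour, each $\Phi\Ssup{\sharp}_{\Int\gamma}$ equals the sum over $\Ccal\Ssup{\sharp}(\Int\gamma)$ of products of weights.

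Finally, concatenate: an external family $\Gamma$ together with compatible families in each interior is in bijection with $\Ccal\Ssup{\sharp}(\Lambda)$. Every contour in such a union has type $\sharp$, and global achievability is not required for abstract families. Geometric compatibility holds because interior families lie at $d_\infty$-distance $>1$ from the interior's complement. Summing over this bijection yields the claimed polymer expansion.
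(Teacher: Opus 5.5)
Your proposal is correct and is essentially the argument the paper relies on (it defers to \cite[Proposition 2]{renaudchanQuermass}). That argument consists of an expansion over external contours, factorisation using saturation on $\partial^-\overline{\gamma}$ and on the interior boundary layers, the multiply-and-divide step with $Z^{(\sharp)}_{\Int_{\sharp^*}\gamma}$, and induction on $|\Lambda|$ followed by concatenation of compatible families. One point needs to be made explicit: your claim that the boundary layers have width at least $r$ does not follow from the definitions of $\partial$ and $\partial^-$ (widths $L+\delta$ and $L$), so you need $L \gtrsim r$; the paper uses this implicitly, and it can be arranged by enlarging $L$, since saturation for $L$ implies saturation for any larger $L$.
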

	The proof of this proposition follows the same outline as in \cite[Proposition 2]{renaudchanQuermass}. Therefore, the analysis of $\Phi_\Lambda\Ssup{\sharp}$ can be done via cluster expansion methods. For that, we need to show that for $\tau >0$ large enough the weights satisfy
	\begin{gather}
		w_\gamma\Ssup{\sharp} \leq e^{-\tau |\overline{\gamma}|}, \label{tau stab}\\
		\left| \frac{\partial w_\gamma\Ssup{\sharp}}{\partial z}\right| \leq |\overline{\gamma}|^{\frac{d}{d-1}} e^{-\tau |\overline{\gamma}|}`. \label{tau stab derivative}
	\end{gather}
	A weight is said to be $\tau$-stable if it satisfies \eqref{tau stab}. In the following lemma, we obtain nice bounds on $I_\gamma$ if the interaction satisfies a Peierls condition.
	\begin{lemma}\label{lem: bound_I_gamma}
		We assume that for any contours $\gamma$ and any configuration $\omega$ that achieve the contour $\gamma$ there exists $\be_+> 0$ such that 
		\begin{equation} \label{ineq: peierls_cond}
			E_{\overline{\gamma} \setminus \partial^{\ssup{-}} \overline{\gamma}} - \overline{E}_{\overline{\gamma} \setminus \partial^{\ssup{-}} \overline{\gamma}}(\omega) \geq \be_+ |\overline{\gamma}|.
		\end{equation} 
		Then we have 
		\begin{gather}
			I_\gamma \leq \overline{Z}_0^{|\overline{\gamma}_0|}\overline{Z}_1^{|\overline{\gamma}_1|} e^{-\beta \be_+ |\overline{\gamma}|} \label{ineq: peierls_cond_1}\\ 
			\left|\frac{\partial I_\gamma}{\partial z}\right| \leq \frac{2- e^{-z\delta^d}}{1 - e^{-z \delta^d}}|\overline{\gamma}|\delta^d \overline{Z}_0^{|\overline{\gamma}_0|}\overline{Z}_1^{|\overline{\gamma}_1|} e^{-\beta \be_+ |\overline{\gamma}|} \label{ineq: peierls_cond_1_derivative}
		\end{gather}
		where $\overline{\gamma}_\sharp := \{ i \in \overline{\gamma}, \sharp_i = \sharp \}$.
	\end{lemma}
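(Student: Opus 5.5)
The plan is to insert the Peierls bound \eqref{ineq: peierls_cond} directly into the integrand of $I_\gamma$ and then factorize what remains tile by tile, using the fact that $\overline{E}$ is a sum of single-tile contributions. On the event that $\omega$ realizes the spin pattern $(\sigma_i)_{i\in\overline{\gamma}}$, the exponent splits as
\begin{equation*}
E_{\overline{\gamma} \setminus \partial^{\ssup{-}}\overline{\gamma}} + \overline{E}_{\partial^{\ssup{-}}\overline{\gamma}} = \overline{E}_{\overline{\gamma}} + \left(E_{\overline{\gamma} \setminus \partial^{\ssup{-}} \overline{\gamma}} - \overline{E}_{\overline{\gamma} \setminus \partial^{\ssup{-}} \overline{\gamma}}\right) \geq \overline{E}_{\overline{\gamma}}(\omega) + \be_+ |\overline{\gamma}|.
\end{equation*}
The Peierls bound applies to every $\omega$ in the support of the integral: only $\omega_{\widehat{\gamma}}$ matters there, and any such $\omega$ with the prescribed spins achieves the contour $\gamma$. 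Since $\be = 0$ after the reduction made earlier, $\overline{E}_{\overline{\gamma}}(\omega) = \sum_{i\in\overline{\gamma}} \be_0 \1_{\sigma(\omega,i)=1}$. Hence $e^{-\beta(\dots)} \le e^{-\beta\be_+|\overline{\gamma}|} \prod_{i\in\overline{\gamma}} e^{-\beta \overline{E}_i(\omega)}$.

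Next comes the factorization. The Poisson process $\Pi^z_{\widehat{\gamma}}$ is a product of independent Poisson processes on the tiles $T_i$, and both the indicator $\1_{\{\forall i, \sigma(\omega,i)=\sigma_i\}}$ and the product $\prod_i e^{-\beta\overline{E}_i}$ factorize over tiles. The integral therefore splits as $\prod_{i\in\overline{\gamma}} \int e^{-\beta \overline{E}_0} \1_{\sigma=\sigma_i}\, \d\Pi^z_{T_0}$. By definition each factor equals $\overline{Z}_{\sigma_i}$, and grouping the tiles according to their spin gives \eqref{ineq: peierls_cond_1}.

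For the derivative, I will write $\Pi^z_{\widehat\gamma}(\d\omega) = e^{-z\delta^d|\overline{\gamma}|} z^{N(\omega)} \times$(Lebesgue reference), so that differentiating in $z$ under the integral sign gives
\begin{equation*}
\frac{\partial I_\gamma}{\partial z} = \int e^{-\beta(\dots)}\1_{\{\cdot\}} \left(\frac{N_{\widehat\gamma}(\omega)}{z} - \delta^d|\overline{\gamma}|\right) \Pi^z_{\widehat{\gamma}}(\d\omega).
\end{equation*}
Differentiation under the integral is justified by the integrability estimates \eqref{ineq: stab energie}. The weight $e^{-\beta(\dots)}\1_{\{\cdot\}}$ is nonnegative, so I can bound $|N/z - \delta^d|\overline{\gamma}|| \le N/z + \delta^d|\overline{\gamma}|$. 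After applying the same Peierls bound, the product structure reduces everything to single-tile computations. The term $\delta^d|\overline{\gamma}|$ simply multiplies $\overline{Z}_0^{|\overline{\gamma}_0|}\overline{Z}_1^{|\overline{\gamma}_1|}e^{-\beta\be_+|\overline{\gamma}|}$. For the term $N/z = \sum_i N_{T_i}/z$, I will handle one tile at a time. For $\sigma_i = 0$ we have $N_{T_i}=0$. For $\sigma_i=1$, $N_{T_i}\ge1$, the factor $e^{-\beta\be_0}$ is constant, and
\begin{equation*}
\E[N_{T_0}\1_{N\ge1}]/z = \delta^d = \delta^d \frac{1-e^{-z\delta^d}}{1-e^{-z\delta^d}},
\end{equation*}
which equals $\overline{Z}_1$ times $\frac{\delta^d}{1-e^{-z\delta^d}}$. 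Summing over tiles gives at most $|\overline{\gamma}|\delta^d\left(1 + \frac{1}{1-e^{-z\delta^d}}\right) = |\overline{\gamma}|\delta^d \frac{2-e^{-z\delta^d}}{1-e^{-z\delta^d}}$ times the same product, which is \eqref{ineq: peierls_cond_1_derivative}.

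The step I expect to require the most care is the first one. I must check that the Peierls inequality can be applied pointwise inside the integral, i.e. that it only involves $\omega_{\widehat\gamma}$ and that every configuration with the prescribed spins genuinely achieves $\gamma$. I must also check that the tiles of $\partial^{\ssup{-}}\overline{\gamma}$ contribute exactly $\overline{E}$, so that the full $\overline{E}_{\overline{\gamma}}$ appears and the factorization goes through. The derivative bound is then routine bookkeeping.
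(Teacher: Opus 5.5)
Your proposal is correct and follows the paper's proof essentially step for step. You inject the Peierls bound pointwise so that the exponent dominates $\overline{E}_{\overline{\gamma}}+\be_+|\overline{\gamma}|$, factorize over tiles to get $\overline{Z}_0^{|\overline{\gamma}_0|}\overline{Z}_1^{|\overline{\gamma}_1|}$, and for the derivative use $\partial_z I_\gamma=-\delta^d|\overline{\gamma}|I_\gamma+z^{-1}\int N_{\widehat{\gamma}}(\cdots)$, with each occupied tile contributing $\frac{\delta^d}{1-e^{-z\delta^d}}\overline{Z}_1$.

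On the step you flag, which the paper passes over silently, the claim that every $\omega$ in the integral achieves $\gamma$ is not literally true. Such an $\omega$ is supported in $\widehat{\gamma}$, so the empty tiles in the exterior or in an adjacent interior component labelled $1$ create extra non-correct sites, and the contour containing $\overline{\gamma}$ becomes strictly larger. The clean fix is to glue $\omega_{\widehat{\gamma}}$ with the part outside $\widehat{\gamma}$ of any configuration achieving $\gamma$. This leaves the spin field, and hence the contour, unchanged. You then use that the energies of the tiles in $\overline{\gamma}\setminus\partial^{\ssup{-}}\overline{\gamma}$ only see $\widehat{\gamma}$, which is already implicit in the definition of $I_\gamma$.
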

	
	\begin{proof}
		For the proof of \eqref{ineq: peierls_cond_1}, we simply inject \eqref{ineq: peierls_cond} into $I_\gamma$ and have
		\begin{align*}
			I_\gamma &\leq e^{-\beta \be_+ |\overline{\gamma}|} \int e^{-\beta \overline{E}_{\overline{\gamma}}(\omega)} \1_{\{\forall i \in \overline{\gamma}, \sigma(\omega,i)=\sigma_i \}} \Pi_{\widehat{\gamma}}^z(\d\omega) \\
			& \leq \overline{Z}_0^{|\overline{\gamma}_0|} \overline{Z}_1^{|\overline{\gamma}_1|} e^{-\beta \be_+ |\overline{\gamma}|}.
		\end{align*}
		For the second inequality \eqref{ineq: peierls_cond_1_derivative}, we need to observe that 
		\begin{align*}
			\frac{\partial I_\gamma}{\partial z} = - \delta^d |\overline{\gamma}| I_\gamma + \frac{1}{z} \int N_{\widehat{\overline{\gamma}}}(\omega) e^{-\beta \left(E_{\overline{\gamma} \setminus \partial^-\overline{\gamma}}  + \overline{E}_{\partial^-\overline{\gamma}}\right)(\omega)} \1_{\{\forall i \in \overline{\gamma}, \sigma(\omega,i)=\sigma_i \}} \Pi_{\widehat{\gamma}}^z(\d\omega).
		\end{align*}
		By injecting \eqref{ineq: peierls_cond} into the second term we have
		\begin{align*}
			\int N_{\widehat{\overline{\gamma}}}(\omega) e^{-\beta \left(E_{\overline{\gamma} \setminus \partial^-\overline{\gamma}}  + \overline{E}_{\partial^-\overline{\gamma}}\right)(\omega)} &\1_{\{\forall i \in \overline{\gamma}, \sigma(\omega,i)=\sigma_i \}} \Pi_{\widehat{\gamma}}^z(\d\omega) \\ &\leq e^{-\beta \be_+ |\overline{\gamma}|} \int N_{\widehat{\overline{\gamma}}}(\omega) e^{-\beta \overline{E}_{\overline{\gamma}}(\omega)} \1_{\{\forall i \in \overline{\gamma}, \sigma(\omega,i)=\sigma_i \}} \Pi_{\widehat{\gamma}}^z(\d\omega) \\
			& \leq e^{-\beta \be_+ |\overline{\gamma}|} \sum_{i\in \overline{\gamma}_1}\int N_{T_i}(\omega) e^{-\beta \overline{E}_{\overline{\gamma}}(\omega)} \1_{\{\forall i \in \overline{\gamma}, \sigma(\omega,i)=\sigma_i \}} \Pi_{\widehat{\gamma}}^z(\d\omega) \\
			&\leq e^{-\beta \be_+ |\overline{\gamma}|} \overline{Z}_0^{|\overline{\gamma}_0|} \overline{Z}_1^{|\overline{\gamma}_1|} \sum_{i\in \overline{\gamma}_1} \frac{1}{\overline{Z}_1} \int N_{T_i}(\omega) e^{-\beta \be_0} \1_{\{\forall i \in \overline{\gamma}, \sigma(\omega,i)=1 \}} \Pi_{T_i}^z(\d\omega) \\
			& \leq \frac{z\delta^d}{1-e^{-z\delta^d}} |\overline{\gamma}_1|  \overline{Z}_0^{|\overline{\gamma}_0|} \overline{Z}_1^{|\overline{\gamma}_1|} e^{-\beta \be_+ |\overline{\gamma}|}.
		\end{align*}
		Using this previous upper bound and \eqref{ineq: peierls_cond_1}, we obtain \eqref{ineq: peierls_cond_1_derivative}.
	\end{proof}
	Under the assumption of a Peierls bound and according to Lemma \ref{lem: bound_I_gamma}, the weights of the contours and their derivative with respect to $z$ verify, for $z \in O_\beta$, 
	\begin{gather}
		w_\gamma\Ssup{\sharp} \leq e^{-(\beta \be_0 - 2)|\overline{\gamma}|} \frac{Z_{\Int_{\sharp^*} \gamma}\Ssup{\sharp^*}}{Z_{\Int_{\sharp^*} \gamma}\Ssup{\sharp}}, \label{ineq: quasi_tau_stab}  \\
		\left| \frac{\partial w_\gamma\Ssup{\sharp}}{\partial z} \right| \leq    \left(|\overline{\gamma}| \left( \delta^d \frac{2- e^{-z\delta^d}}{1 - e^{-z \delta^d}} + \left|\displaystyle\frac{\frac{\partial Z_\sharp}{\partial z}}{Z_\sharp} \right| \right) \frac{Z_{\Int_{\sharp^*} \gamma}\Ssup{\sharp^*}}{Z_{\Int_{\sharp^*} \gamma}\Ssup{\sharp}} + \frac{\partial}{\partial z} \left( \frac{Z_{\Int_{\sharp^*} \gamma}\Ssup{\sharp^*}}{Z_{\Int_{\sharp^*} \gamma}\Ssup{\sharp}} \right) \right) e^{-(\beta \be_0 -2) |\overline{\gamma}| }. \label{ineq: quasi_tau_stab_deriv}
	\end{gather} 
	At this point of the proof, we need to deal with the contributions of the ratio of partition functions in order to obtain the desired bounds for the weights, \eqref{tau stab} and \eqref{tau stab derivative}.  
	\subsection{Truncated pressures and weights}
	In general, the ratio of partition functions for different boundary conditions might grow too fast with the volume of $\Lambda$ and therefore the weights of some contours might not satisfy \eqref{tau stab} and \eqref{tau stab derivative}. In order to deal with this, we will study the truncated version of the weights. 
	\newline
	
	Let us consider a cut-off function $\kappa \in \Ccal^1(\R, [0,1])$ such that 
	\begin{gather}
		\kappa(s) = \begin{cases}
			1, &\text{ if } s\leq \frac{\beta \be_+}{8}, \\
			0, &\text{ if } s\geq \frac{\beta \be_+}{4}, 
		\end{cases} \\
		\ \norm{\kappa'} = \sup_\R |\kappa'(s)| <\infty. 
	\end{gather}
	The construction of the truncated weights and pressure is inductive with respect to the size of the interior of the contours. For $n=0$, we define the truncated pressure as 
	\begin{equation}
		\widehat \psi_0\Ssup{\sharp} := \frac{\ln Z_\sharp}{\delta^d},
	\end{equation}
	and for any contour $\gamma \in \Ccal_0\Ssup{\sharp}$,
	\begin{equation}
		\widehat{w}_\gamma\Ssup{\sharp} := w_\gamma\Ssup{\sharp} = \overline{Z}_\sharp^{-|\overline{\gamma}|} I_\gamma.  
	\end{equation} 
	Now we suppose that the truncated weights are well defined for contours $\gamma$ such that  $|\Int \gamma| \leq n$. We define the truncated partition function at rank $n$ to be
	\begin{equation}
		\widehat{Z}_{\Lambda, n}\Ssup{\sharp} := Z_\sharp^{|\overline{\Lambda}|}\sum_{\Gamma \in \Ccal_n\Ssup{\sharp}(\Lambda)} \prod_{\gamma \in \Gamma} \widehat{w}_\gamma\Ssup{\sharp},
	\end{equation}
	and the associated truncated pressure for boundary $\sharp$ at rank $n$ is given by
	\begin{equation}
		\widehat{\psi}_n\Ssup{\sharp} := \lim\limits_{k \to \infty} \frac{\ln \widehat{Z}_{\Lambda_k, n}}{\delta^d |\Lambda_k|}.
	\end{equation}
	Using a sub-additivity argument we prove that this limit exists. By construction, we have that $\overline{Z}_\sharp^{|\Lambda|} \leq \widehat Z_{\Lambda, n} \leq Z_\Lambda$ since we have at least the empty contour $\gamma = (\emptyset, \emptyset)$ whose weight is by definition equal to $1$. Therefore, $\widehat{\psi}_n\Ssup{\sharp} \in [\widehat{\psi}_0\Ssup{\sharp}, \psi]$. We can observe that $\left(\widehat{\psi}_n\Ssup{\sharp}\right)_{n \in \N}$ is increasing. Furthermore, we define the truncated pressure as 
	\begin{equation}
		\widehat{\psi}_n := \max \{\widehat{\psi}\Ssup{0}, \widehat{\psi}\Ssup{1}\}. 
	\end{equation}
	Let us consider a contour $\gamma$ such that $|\Int \gamma| = n+1$. We define the truncated weight of $\gamma$ as 
	\begin{equation}
		\widehat{w}_{\gamma}\Ssup{\sharp} := \overline{Z}_\sharp^{-|\overline{\gamma}|} I_{\gamma} \, \kappa \left( (\widehat{\psi}_n\Ssup{\sharp^*} - \widehat{\psi}_n\Ssup{\sharp}) \delta^d |\Int_{\sharp^*} \gamma|^{\frac{1}{d}}  \right) \frac{Z_{\Int_{\sharp^*} \gamma}\Ssup{\sharp^*}}{Z_{\Int_{\sharp^*} \gamma}\Ssup{\sharp}}.
	\end{equation}
	Furthermore, we define the following quantities of interest, $a_n\Ssup{\sharp} := \widehat{\psi}_n - \widehat{\psi}_n\Ssup{\sharp}$. By definition of the truncated weights for all contours $\gamma$ of class $n+1$ we have the following implication
	\begin{equation}
		a_n\Ssup{\sharp} \delta^d (n+1)^{\frac{1}{d}} \leq \frac{\beta \be_+}{8} \implies \widehat{w}_{\gamma}\Ssup{\sharp}  = w_{\gamma}\Ssup{\sharp}.
	\end{equation}
	Once the construction of the truncated weights for all contours is completed, we can define the truncated partition function associated with the $\sharp$ boundary as
	\begin{equation}
		\widehat{Z}_\Lambda\Ssup{\sharp} := \overline{Z}_\sharp^{|\Lambda|}  \widehat{\Phi}_\Lambda\Ssup{\sharp}, \quad \text{ where } \quad  \widehat{\Phi}_\Lambda\Ssup{\sharp}= \sum_{\Gamma \in \Ccal\Ssup{\sharp}(\Lambda)} \prod_{\gamma \in \Gamma} \widehat{w}_\gamma\Ssup{\sharp},
	\end{equation}
	the truncated pressure for the $\sharp$ boundary as
	\begin{equation}
		\widehat{\psi}\Ssup{\sharp} := \lim\limits_{k \to \infty} \frac{\ln \widehat{Z}_{\Lambda_k}\Ssup{\sharp}}{\delta^d |\Lambda_k|},
	\end{equation}
	and the truncated pressure as
	\begin{equation}
		\widehat{\psi} := \max\{\widehat{\psi}\Ssup{0}, \widehat{\psi}\Ssup{1}\}.
	\end{equation} 
	Equivalently, the truncated pressures can also be obtained as the limit of the truncated pressures of rank $n$. We denote $a\Ssup{\sharp} := \widehat{\psi} - \widehat{\psi}\Ssup{\sharp}$. Note that the truncated pressure for the $\sharp$ boundary satisfies
	\begin{equation}
		\widehat{\psi}\Ssup{\sharp} = \widehat{\psi}_0\Ssup{\sharp} + \varepsilon\Ssup{\sharp},
	\end{equation}
	where
	\begin{equation}
		\varepsilon\Ssup{\sharp} = \lim\limits_{k \to \infty} \frac{\ln \widehat{\Phi}_{\Lambda_k}\Ssup{\sharp}}{\delta^d |\Lambda_k|}. 
	\end{equation}
	
	\begin{proposition}\label{prop: tau_stab_trunc_weights}
		Let $\tau := \frac{1}{2}\beta  \be_+ - 8$. Then there exists $\beta_0$ with  $0<\beta_0< \infty$ such that, for all $\beta > \beta_0$, there exist $C_1>0$, $C_2>0$ and $D\geq 1$ for which the following statements hold for all $z \in O_\beta$, $\sharp \in \{0,1\}$ and $n\geq 0$.
		\begin{enumerate}
			\item For all $k \leq n$, the truncated weights of each contour $\gamma$ with $|\Int \gamma| = k$ verify
			\begin{equation}
				\widehat{w}_{\gamma}\Ssup{\sharp} \leq e^{-\tau |\overline{\gamma}|} \label{truncated_weights_tau_stability}
			\end{equation}
			and we have the following implication
			\begin{equation} \label{implication_bound_a_n_to_stability}
				a_n\Ssup{\sharp} \delta^d |\Int \gamma|^{\frac{1}{d}} \leq \frac{\beta \be_+ }{16} \implies \widehat{w}_{\gamma}\Ssup{\sharp} = w_{\gamma}\Ssup{\sharp}. 
			\end{equation}
			Moreover, $z \mapsto \widehat{w}_{\gamma}\Ssup{\sharp} $ is differentiable and satisfies
			\begin{equation}
				\left|\frac{\partial \widehat{w}_{\gamma}\Ssup{\sharp}}{\partial z} \right| \leq D |\overline{\gamma}|^{\frac{d}{d-1}} e^{-\tau |\overline{\gamma}|}. \label{bound_truncated_weight_derivative}
			\end{equation}
			\item For $\Lambda \subset \Z^d$ such that $|\Lambda| \leq n+1$, we have
			\begin{align}
				Z_\Lambda\Ssup{\sharp}& \leq e^{\widehat{\psi}_{n} \delta^d|\Lambda| + 2 |\partial_{ext} \Lambda|}, \label{bound_partition_function}\\
				\left| \frac{\partial Z_\Lambda\Ssup{\sharp}}{\partial z} \right| & \leq \left( C_1 |\Lambda| + C_2 |\partial_{ext} \Lambda| \right) e^{ \widehat{\psi}_n \delta^d|\Lambda| + 2 |\partial_{ext} \Lambda|}. \label{bound_derivative_partition_function}
			\end{align}
		\end{enumerate}
	\end{proposition}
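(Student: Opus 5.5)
We prove the two statements simultaneously by induction on $n$, following the Zahradník scheme as adapted in \cite[Proposition 3]{renaudchanQuermass}. The inductive hypothesis at rank $n$ is that statement 1 holds for all contours of class $k\le n$ and statement 2 holds for all $\Lambda$ with $|\Lambda|\le n+1$. Two standard cluster-expansion facts are used throughout; they follow from Kotecký--Preiss for $\tau$-stable weights once $\beta_0$ is large. First, if all weights in $\Ccal_n\Ssup{\sharp}(\Lambda)$ are $\tau$-stable with derivative bounds \eqref{bound_truncated_weight_derivative}, then
\[
\bigl|\ln \widehat{Z}_{\Lambda,n}\Ssup{\sharp} - \widehat{\psi}_n\Ssup{\sharp}\delta^d|\Lambda|\bigr| \le |\partial_{ext}\Lambda| .
\]
The second fact is the same statement for the $z$-derivatives, with constants of order $e^{-\tau}$. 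Since $\widehat{\psi}_n\Ssup{\sharp}\le \widehat{\psi}_n$, the first fact immediately gives
\[
\widehat{Z}_{\Lambda,n}\Ssup{\sharp}\le e^{\widehat{\psi}_n\delta^d|\Lambda|+|\partial_{ext}\Lambda|}.
\]

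\textbf{Base case $n=0$.} Contours of class $0$ have $\widehat{w}_\gamma\Ssup{\sharp}=\overline{Z}_\sharp^{-|\overline{\gamma}|}I_\gamma$. Lemma \ref{lem: bound_I_gamma} gives
\[
\widehat{w}_\gamma\Ssup{\sharp}\le (\overline{Z}_{\sharp^*}/\overline{Z}_\sharp)^{|\overline{\gamma}_{\sharp^*}|}e^{-\beta\be_+|\overline{\gamma}|}.
\]
For $z\in O_\beta$ (with $\be=0$), the definition \eqref{def: z_beta_pm} gives $e^{-2}\le \overline{Z}_1/\overline{Z}_0\le e^{2}$. Hence $\widehat{w}_\gamma\Ssup{\sharp}\le e^{-(\beta\be_+-2)|\overline{\gamma}|}\le e^{-\tau|\overline{\gamma}|}$. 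The derivative bound follows from \eqref{ineq: peierls_cond_1_derivative} together with $|\partial_z \overline{Z}_\sharp/\overline{Z}_\sharp|\le C$ uniformly on $O_\beta$. Statement 2 for $|\Lambda|=1$ is direct.

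\textbf{Induction step, partition function bound.} Assume rank $n$ and take $|\Lambda|\le n+2$. Expand $Z_\Lambda\Ssup{\sharp}$ with Proposition \ref{prop: polymer_dvt}, but resum over external contours only:
\[
Z_\Lambda\Ssup{\sharp}=\sum_{\Gamma\in\Ccal_{ext}\Ssup{\sharp}(\Lambda)}\overline{Z}_\sharp^{|\Lambda\setminus \cup(\overline{\gamma}\cup\Int\gamma)|}\prod_{\gamma}\overline{Z}_\sharp^{|\overline\gamma|}w_\gamma\Ssup{\sharp}\, Z_{\Int_{\sharp}\gamma}\Ssup{\sharp}\frac{Z_{\Int_{\sharp^*}\gamma}\Ssup{\sharp}}{Z_{\Int_{\sharp}\gamma}\Ssup{\sharp}}\cdots .
\]
Equivalently, each external $\gamma$ contributes $\overline{Z}_\sharp^{-|\overline\gamma|}I_\gamma Z_{\Int_0\gamma}\Ssup{0}Z_{\Int_1\gamma}\Ssup{1}$. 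The interiors have size $\le n+1$, so the inductive bound \eqref{bound_partition_function} applies to them.

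We then split according to whether the truncation is active. If $a_n\Ssup{\sharp}\delta^d|\Int\gamma|^{1/d}\le \beta\be_+/16$, then $w_\gamma=\widehat{w}_\gamma$ and the term is part of the truncated expansion. Otherwise, we bound the term crudely by
\[
I_\gamma\overline{Z}_\sharp^{-|\overline\gamma|}e^{\widehat{\psi}_n\delta^d|\Int\gamma|+2|\partial_{ext}\Int\gamma|}.
\]
We then use $|\partial_{ext}\Int\gamma|\le c|\overline{\gamma}|$ and the isoperimetric inequality $|\overline\gamma|\ge c|\Int\gamma|^{(d-1)/d}$, so the extra factor $e^{a_n\delta^d|\Int\gamma|}$ is absorbed by $e^{-\frac{\beta\be_+}{2}|\overline\gamma|}$. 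Summing with the cluster expansion gives \eqref{bound_partition_function} with the constant $2$.

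\textbf{Induction step, weights of class $n+1$.} For $\gamma$ of class $n+1$, apply \eqref{bound_partition_function} to $\Int_{\sharp^*}\gamma$ in the numerator. In the denominator use the lower bound
\[
Z_{\Int_{\sharp^*}\gamma}\Ssup{\sharp}\ge \widehat{Z}_{\Int_{\sharp^*}\gamma,n}\Ssup{\sharp}\ge e^{\widehat{\psi}_n\Ssup{\sharp}\delta^d|\Int_{\sharp^*}\gamma|-|\partial_{ext}|}.
\]
The ratio is therefore at most $e^{a_n\Ssup{\sharp}\delta^d|\Int_{\sharp^*}\gamma|+3|\partial_{ext}\Int_{\sharp^*}\gamma|}$. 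Where $\kappa\neq0$ we have $a_n\Ssup{\sharp}\delta^d|\Int_{\sharp^*}\gamma|^{1/d}\le\beta\be_+/4$. Combined with isoperimetry this gives $a_n\Ssup{\sharp}\delta^d|\Int_{\sharp^*}\gamma|\le \frac{\beta\be_+}{4}c|\overline\gamma|$, while the boundary term is $O(|\overline\gamma|)$. Together with \eqref{ineq: quasi_tau_stab} this yields $e^{-(\beta\be_+/2-8)|\overline\gamma|}$; the constants $8$ and the dimensional factors are to be tuned here. The implication \eqref{implication_bound_a_n_to_stability} is immediate from $\kappa=1$ below $\beta\be_+/8$. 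Monotonicity of $a_n$ in $n$ then allows passage from rank $n$ to later ranks.

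\textbf{Derivative bounds.} For the derivatives we differentiate the product defining $\widehat{w}_\gamma$. This produces three kinds of terms:
\begin{itemize}
\item the term from $I_\gamma$, handled by \eqref{ineq: peierls_cond_1_derivative};
\item the $\kappa'$ term, bounded using $\|\kappa'\|$ and $|\partial_z\widehat\psi_n\Ssup{\sharp}|\le C$ from the cluster expansion, times $|\Int\gamma|^{1/d}\le C|\overline\gamma|^{1/(d-1)}$;
\item the log-derivatives of the partition functions, bounded by \eqref{bound_derivative_partition_function} and its lower-bound analogue, giving factors $C_1|\Int\gamma|+C_2|\partial_{ext}|\le C|\overline\gamma|^{d/(d-1)}$.
\end{itemize}
This is the source of the $|\overline\gamma|^{d/(d-1)}$ factor and of the constant $D$. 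Bound \eqref{bound_derivative_partition_function} itself is proved by differentiating the external-contour decomposition and repeating the case split above.

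\textbf{Main obstacle.} I expect the hardest step to be the partition function bound \eqref{bound_partition_function} with the fixed constant $2$ in front of $|\partial_{ext}\Lambda|$. The difficulty is choosing the external-contour resummation so that the boundary losses of the interiors are genuinely absorbed by the Peierls gain, uniformly in $n$ and $z\in O_\beta$. I would first try to copy the argument of \cite{renaudchanQuermass} verbatim, checking only that saturation \ref{E:saturation} is used wherever the Quermass-specific structure was used before.
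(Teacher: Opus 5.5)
\textbf{Gap in the unstable branch.} Your overall scheme is the Zahradník induction that the paper imports from \cite[Proposition 4]{renaudchanQuermass}: simultaneous induction, the ratio bounded via \eqref{bound_partition_function} in the numerator and the truncated expansion in the denominator, and a three-term derivative estimate. The weight and derivative steps are fine modulo constants. The gap is in the step you flagged yourself, the proof of \eqref{bound_partition_function}, in the branch where $a_n\Ssup{\sharp}\delta^d|\Int\gamma|^{1/d}>\beta\be_+/16$.

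There the factor $e^{a_n\Ssup{\sharp}\delta^d|\Int\gamma|}$ cannot be absorbed by $e^{-\frac{\beta\be_+}{2}|\overline\gamma|}$. Isoperimetry only gives $a_n\Ssup{\sharp}\delta^d|\Int\gamma|\le (a_n\Ssup{\sharp}\delta^d|\Int\gamma|^{1/d})\,C|\overline\gamma|$. The prefactor in parentheses is bounded by $\beta\be_+/4$ only where $\kappa\neq0$, which is the weight estimate, not the branch you are in. As a sanity check, if your absorption held for every contour, the untruncated weights would be $\tau$-stable for both boundary conditions at every $z\in O_\beta$, and the truncation would be pointless. Away from $z_\beta^c$, the ratio $Z\Ssup{\sharp^*}_{\Int_{\sharp^*}\gamma}/Z\Ssup{\sharp}_{\Int_{\sharp^*}\gamma}\approx e^{a\Ssup{\sharp}\delta^d|\Int_{\sharp^*}\gamma|}$ genuinely beats the Peierls factor for large contours.

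Likewise, ``summing with the cluster expansion'' cannot work. Adding these contours as extra polymers to the $\sharp$-expansion shifts the free energy by a volume-order amount, incompatible with a correction of only $2|\partial_{ext}\Lambda|$.

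\textbf{Gap in the stable branch.} Each external $\gamma$ carries the full $Z\Ssup{\sharp}_{\Int\gamma}$. To call its term ``part of the truncated expansion'' you need that every type-$\sharp$ contour inside a stable one is stable. This holds because the criterion $a_n\Ssup{\sharp}\delta^d|\Int\gamma|^{1/d}\le\beta\be_+/16$ is monotone in $|\Int\gamma|$, but you must say so.

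\textbf{A correct route.} Write $V(\gamma):=\overline\gamma\cup\Int\gamma$.
\begin{itemize}
\item Bound each unstable external contour, interior included, directly against $e^{\widehat{\psi}_n\delta^d|V(\gamma)|}e^{-(\beta\be_+-c)|\overline\gamma|}$. This uses the induction on the interiors and $\overline Z_{\sharp'}\le e^{\widehat\psi_n\delta^d}$. No $a$-factor ever appears.
\item Resum the stable external contours, with their interiors, into $\widehat Z\Ssup{\sharp}_{\Lambda',n}\le e^{(\widehat\psi_n-a_n\Ssup{\sharp})\delta^d|\Lambda'|+|\partial_{ext}\Lambda'|}$ on the uncovered region $\Lambda'$.
\item What remains is $S=\sum_{\Gamma'_u}e^{-a_n\Ssup{\sharp}\delta^d|\Lambda'|}\prod_{\gamma\in\Gamma'_u}x_\gamma$, with $x_\gamma=e^{-\tau'|\overline\gamma|}$. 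You must show $S\le 1$ using the exterior penalty rather than dropping it.
\item The key fact is that unstable contours are huge: $|\overline\gamma|\ge c\,(\beta\be_+/(16a_n\Ssup{\sharp}\delta^d))^{d-1}$. Hence $\sum_{\gamma\text{ unstable},\,V(\gamma)\ni x}x_\gamma\le 1-e^{-a_n\Ssup{\sharp}\delta^d}$ uniformly in $a_n\Ssup{\sharp}\delta^d\le 3$ for $\beta$ large (here $d\ge2$ is used).
\item The exact first-site recursion $S(A)=e^{-a_n\Ssup{\sharp}\delta^d}S(A\setminus\{x_0\})+\sum_{\gamma:\,x_0\in V(\gamma)\subset A}x_\gamma\,S(A\setminus V(\gamma))$ then gives $S\le1$ by induction on $|A|$.
\end{itemize}

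\textbf{Minor point.} $a_n\Ssup{\sharp}$ is not monotone in $n$. To pass from the rank $k-1$ used inside $\kappa$ to rank $n$ in \eqref{implication_bound_a_n_to_stability}, you need $|\widehat\psi\Ssup{\sharp}_n-\widehat\psi\Ssup{\sharp}_{k-1}|\le\delta^{-d}e^{-\frac{\tau}{2}(k-1)^{(d-1)/d}}$, which comes from the $\tau$-stability of higher classes.
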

	
	The proof of this proposition follows the same line of argument as that of \cite[Proposition 4]{renaudchanQuermass}, with 
	\begin{gather}
		C_1 = \sup_{z\in O_\beta} \left\{ (e+2)\delta^d + \frac{\beta (\be_0 +\min(\be_0,0) ) - \ln(1-e^{-z\delta^d})}{z} \right\},  \quad  C_2 = \frac{1}{z_\beta^-},		
	\end{gather}
	and
	\begin{equation}
		D = 3 C_1   + 2 C_2 + \left( \frac{1}{1-e^{-z_\beta^- \delta^d}} + 2 \right) \norm{\kappa'} \delta^d
	\end{equation}
	
	As a consequence of Proposition \ref{prop: tau_stab_trunc_weights}, we can conclude that all the truncated weights are $\tau$-stable and, for $\beta$ large enough the truncated pressure $\widehat{\psi}\Ssup{\sharp}$ can be expressed via a convergent cluster expansion. Furthermore, since the truncated weights are $\tau$-stable by \cite[Lemma 12]{renaudchanQuermass}, for $n\geq k$, we have 
	\begin{equation}
		|\widehat{\psi}_n\Ssup{\sharp} - \widehat{\psi}_k\Ssup{\sharp}| \leq \frac{1}{ \delta^d} e^{-\frac{\tau}{2}k^{\frac{d-1}{d}}} \quad \text{and } \quad |\widehat{\psi}_{n} - \widehat{\psi}_{k}| \leq \frac{1}{ \delta^d} e^{-\frac{\tau}{2}k^{\frac{d-1}{d}}}.
	\end{equation}
	As $n$ goes to infinity, we obtain
	\begin{equation}
		|\widehat{\psi}\Ssup{\sharp} - \widehat{\psi}_k\Ssup{\sharp}| \leq \frac{1}{ \delta^d} e^{-\frac{\tau}{2}k^{\frac{d-1}{d}}} \quad \text{and } \quad |\widehat{\psi} - \widehat{\psi}_{k}| \leq \frac{1}{ \delta^d} e^{-\frac{\tau}{2}k^{\frac{d-1}{d}}}.
	\end{equation}
	Therefore, for any contours $\gamma$ of class $k$, we have
	\begin{align*}
		a_k\Ssup{\sharp} \delta^d | \Int \gamma|^{\frac{1}{d}} &= a\Ssup{\sharp} \delta^d |\Int \gamma|^{\frac{1}{d}} +(a_k\Ssup{\sharp} - a\Ssup{\sharp})\delta^d |\Int \gamma|^{\frac{1}{d}} \\
		& \leq a\Ssup{\sharp} \delta^d |\Int \gamma|^{\frac{1}{d}} + 2 k^{\frac{1}{d}} e^{-\frac{\tau k^{\frac{1}{d}}}{2}}.
	\end{align*}
	Thus, for $\beta$ large enough such that $2 k^{\frac{1}{d}} e^{-\frac{\tau k^{\frac{1}{d}}}{2}} \leq \frac{\beta \be_+}{16}$,
	we get
	\begin{align*}
		a_k\Ssup{\sharp} \delta^d | \Int \gamma|^{\frac{1}{d}} \leq a\Ssup{\sharp} \delta^d |\Int \gamma|^{\frac{1}{d}} + \frac{\beta \be_+}{16}.
	\end{align*}
	As a consequence, the implication \eqref{implication_bound_a_n_to_stability} still holds as $n$ goes to infinity. Therefore, when $a\Ssup{\sharp} =0$, for any contour $\gamma$, we have $\widehat{w}_\gamma\Ssup{\sharp} = w_\gamma\Ssup{\sharp}$ and, for any $\Lambda \subset \Z^d$, $\widehat{Z}_\Lambda\Ssup{\sharp} = Z_\Lambda\Ssup{\sharp}$. This implies that if $\widehat{\psi}\Ssup{\sharp} = \widehat{\psi}$, then $\widehat{\psi}\Ssup{\sharp} = \psi\Ssup{\sharp}$ and, by Lemma \ref{lem: pression_inde_bord}, we have 
	\begin{equation}\label{eq: pression_pressiontronc}
		\psi = \widehat{\psi} = \max\{\widehat{\psi}\Ssup{0}, \widehat{\psi}\Ssup{1} \}. 
	\end{equation}
	
	\subsection{Proof of Theorem \ref{thm: liquid_gaz1}}
	In the previous sub-section, using Proposition \ref{prop: tau_stab_trunc_weights}, we showed that all the truncated weights satisfy \eqref{tau stab} and \eqref{tau stab derivative}. If we assume that $\beta$ is large enough, the truncated weights satisfy the assumptions of \cite[Theorem 11]{renaudchanQuermass} and thus, we have
	\begin{align*}
		\widehat{\Phi}_\Lambda\Ssup{\sharp} = e^{ \varepsilon\Ssup{\sharp} \delta^d  |\Lambda| + \Delta_\Lambda\Ssup{\sharp}}	
	\end{align*}
	and $\varepsilon\Ssup{\sharp}$ and $\Delta_\Lambda\Ssup{\sharp}$ are $\mathcal{C}^1$ in $O_\beta$. Furthermore, according to Theorem \cite[Theorem 11]{renaudchanQuermass} uniformly in $O_\beta$, we have
	\begin{align}
		&|\varepsilon\Ssup{\sharp}| \leq  \eta(\tau, l_0),&  &|\Delta_\Lambda\Ssup{\sharp}| \leq \eta(\tau, l_0) |\partial_{ext} \Lambda |, \label{ineq: quant_pertubatif}\\
		&\left| \frac{\partial \varepsilon\Ssup{\sharp}}{\partial z} \right| \leq D \eta(\tau, l_0),&  &\left| \frac{\partial \Delta_\Lambda\Ssup{\sharp}}{\partial z} \right| \leq D \eta(\tau, l_0) |\partial_{ext} \Lambda|, \label{ineq: quant_perturbatif_derivee}
	\end{align}
	where $\eta(\tau,l_0)= 2e^{-\frac{\tau l_0}{3}}$ and $l_0$ is the size of the smallest contour. Now we look at the difference between the two truncated pressures with boundary conditions. 
	\begin{equation}
		G(z) := \widehat{\psi}\Ssup{1} - \widehat{\psi}\Ssup{0} = - \frac{\beta \be_0}{\delta^d} + \frac{\ln(e^{z \delta^d} -1 )}{\delta^d} + \varepsilon\Ssup{1} - \varepsilon\Ssup{0}.
	\end{equation}
	According to \eqref{ineq: quant_pertubatif}, we have for $\beta$ large enough, 
	\begin{gather}
		G(z_\beta^-) \leq -\frac{2}{\delta^d} + 2 \eta(\tau, l_0) <0, \\
		G(z_\beta^+) \geq \frac{2}{\delta^d} - 2 \eta(\tau, l_0) >0. 
	\end{gather}
	In addition, using \eqref{ineq: quant_perturbatif_derivee}, we have, for $z \in O_\beta$,
	\begin{equation}
		G'(z) = \frac{e^{z \delta^d}}{e^{z \delta^d} - 1} + \frac{\partial \varepsilon\Ssup{1}}{\partial z} - \frac{\partial \varepsilon\Ssup{0}}{\partial z} > 1 - 2 D \eta(\tau, l_0) > 0. 
	\end{equation}
	Therefore, there is a unique $z_\beta^c \in O_\beta$ such that 
	\begin{equation}
		\widehat{\psi} = \begin{cases}
			\widehat{\psi}\Ssup{0} & \text{if } z \in (z_\beta^-, z_\beta^c], \\
			\widehat{\psi}\Ssup{1} & \text{if } z \in [z_\beta^c, z_\beta^+). 
		\end{cases}
	\end{equation}
	Furthermore, we have for all $z \in O_\beta$ 
	\begin{equation}\label{ineq: derivee_pression}
		\frac{\partial \widehat{\psi}\Ssup{1}}{\partial z} > \frac{\partial \widehat{\psi}\Ssup{0}}{\partial z},
	\end{equation}
	and therefore, with \eqref{eq: pression_pressiontronc} we have 
	\begin{equation}
		\frac{\partial \psi}{\partial z^+} (z_\beta^c) > \frac{\partial \psi}{\partial z^-} (z_\beta^c).
	\end{equation}	
	We have found the critical activity for which the pressure is non-differentiable. Our goal now is to prove that the infinite Gibbs measures obtained in Proposition \ref{prop: champ_empirique_DLR} give different densities of particles. By direct computation, we have
	\begin{equation}
		\frac{\partial Z_{\Lambda_k}\Ssup{\sharp}}{\partial z} = - \delta^d |\Lambda_k| + \frac{1}{z} E_{\P_{\Lambda_k}\Ssup{\sharp}}\left( N_{\widehat \Lambda_k} \right)
	\end{equation}
	For $\beta$ large enough and $z = z_\beta^c$, we know that $Z_\Lambda\Ssup{\sharp} = \widehat Z_\Lambda\Ssup{\sharp}$, since $a\Ssup{0} = a\Ssup{1} = 0$. Therefore
	\begin{equation}
		\frac{\partial \ln Z_{\Lambda_k}\Ssup{\sharp}}{\partial z} = \frac{\partial \ln \widehat{Z}_{\Lambda_k}\Ssup{\sharp}}{\partial z} = \frac{\partial \widehat{\psi}\Ssup{\sharp}}{\partial z} \delta^d |\Lambda_k| + \frac{\partial \Delta\Ssup{\sharp}_{\Lambda_k}}{\partial z}.
	\end{equation}
	As a consequence, we obtain the following 
	\begin{equation}
		\frac{E_{\P_{\Lambda_k}\Ssup{\sharp}}\left(N_{\widehat{\Lambda}_k}\right)}{\delta^d |\Lambda_k|} = z + z \frac{\partial \widehat{\psi}\Ssup{\sharp}}{\partial z} + \frac{z}{\delta^d |\Lambda_k|} \frac{\partial \Delta\Ssup{\sharp}_{\Lambda_k}}{\partial z}.
	\end{equation}
	Using \eqref{ineq: quant_perturbatif_derivee}, we have
	\begin{equation}\label{lim: quant pertubatif derivee}
		\lim\limits_{k \to \infty} \frac{1}{|\Lambda_k|} \frac{\partial \Delta_{\Lambda_k}\Ssup{\sharp}}{\partial z} = 0
	\end{equation}
	 By definition of the empirical field, we have 
	 \begin{equation}
	 	E_{\P_{\Lambda_k}\Ssup{\sharp}}\left(N_{\widehat{\Lambda}_k}\right) = E_{\tilde \P_{\Lambda_k}\Ssup{\sharp}}\left(N_{\widehat{\Lambda}_k}\right),
	 \end{equation}
	 and, since it is stationary, we have
	 \begin{equation}
	 	\frac{E_{\tilde \P_{\Lambda_k}\Ssup{\sharp}}\left(N_{\widehat{\Lambda}_k}\right)}{\delta^d |\Lambda_k|} = E_{\tilde \P_{\Lambda_k}\Ssup{\sharp}}\left(N_{[0,1]^d}\right) = \rho(\tilde \P_{\Lambda_k}\Ssup{\sharp}).
	 \end{equation}
	 According to Proposition \ref{prop: champ_empirique_DLR}, we know that the empirical field $\left(\tilde \P_{\Lambda_k}\Ssup{\sharp} \right)_{k \in \N}$ has an accumulation point $\P\Ssup{\sharp}$ for the local topology. Therefore, by taking the limit for the correct sub-sequence we have
	 \begin{equation}
	 	\rho(\P\Ssup{\sharp}) = z_\beta^c + z_\beta^c \frac{\partial \widehat{\psi}\Ssup{\sharp}}{\partial z}(z_\beta^c).
	 \end{equation}
	 Finally, according to \eqref{ineq: derivee_pression} we have $\rho(\P^{(1)}) > \rho(\P^{(0)})$.
	 
	\section{Proof of Theorem \ref{thm:fopt_diluted_pair} }\label{sec: proof_thm2}
	
	We have shown in the section with examples of saturated interactions that the diluted pairwise interaction satisfies assumptions \ref{E:non-degenerate}-\ref{E:saturation}. In order to apply Theorem \ref{thm: liquid_gaz1} we need to show that, under the condition \eqref{cond: phi_global}, the diluted pairwise interaction exhibit a Peierls-like condition. 
	
	\begin{proposition}
		For $\delta$ small enough, $L> R_2 + 2 \sqrt{d}\delta$ and $\phi$ be a radial pair potential such that $r^{d-1}\phi \in L^1$ and that satisfies
		\begin{equation}
			C_d \int\limits_{B(0,R)} \phi^+ \d x > \left[ \left(\frac{R_1}{R} \right)^d-1 \right] \int\limits_{B(0,R_1) \backslash B(0,R)} \phi^+ \d x + \int\limits_{\R^d} \phi^- \d x,
		\end{equation}
		where
		\begin{equation*}\label{cond: diluted_pairwise interaction}
			C_d = \frac{\int_{0}^{\frac{\pi}{3}} \sin(\theta)^{d-2} \d \theta }{\int_{0}^{\pi} \sin(\theta)^{d-2} \d \theta }.
		\end{equation*}
		There is $\be_+>0$ such that for any contours $\gamma$ and any configuration $\omega$ that achieves this contour we have
		\begin{equation}
			E_{\overline{\gamma}} - \overline{E}_{\overline{\gamma}}(\omega) \geq \be_+ |\overline{\gamma}|.
		\end{equation}
	\end{proposition}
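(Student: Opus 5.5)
We plan to verify the Peierls condition through the ``energy from the vacuum'' mechanism of the Corollary following Theorem~\ref{thm: liquid_gaz1}. We will show two facts: a tile-wise lower bound $E_j(\omega)\ge \overline{E}_j(\omega)$ for every tile $j$, and a uniform excess $E_j(\omega)-\overline{E}_j(\omega)\ge \be_\emptyset>0$ whenever $(i,j)$ is a domino of a contour. Combined with $|D(\gamma)|\ge r_D|\overline{\gamma}|$ from \cite[Lemma 5]{renaudchanQuermass}, this gives $\be_+=\be_\emptyset r_D$. One caveat: for the diluted interaction $\overline{E}_0 = \delta^d C_\phi \1_{\omega_{T_0}\neq\emptyset}$, so at an empty tile $j$ we need $E_j\ge\be_\emptyset$. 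At an occupied tile we need $E_j-\delta^dC_\phi\ge$ something nonnegative, and in general only when the tile sits at an interface.

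We rewrite the tile energy pointwise. For $x\in T_j\cap L_R(\omega)$ set $e(x)=\int_{L_R(\omega)}\phi(|x-y|)\,\d y$, so that $E_j=\int_{T_j\cap L_R}e(x)\,\d x$. Since $\phi$ has support in $B(0,R_2)$, we have $e(x)=C_\phi$ whenever $B(x,R_2)\subset L_R(\omega)$. Otherwise,
$$e(x)-C_\phi=-\int_{L_R^c}\phi(|x-y|)\,\d y \ge \int_{L_R^c}\phi^-\,\d y-\int_{L_R^c\cap B(x,R_1)}\phi^+\,\d y.$$
This is where we need care. The key geometric input is that $x\in L_R(\omega)$ lies within $R$ of a point $p\in\omega$, so $B(p,R)\subset L_R$ removes a definite portion of $B(x,R)$ from $L_R^c$. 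Conversely, for an empty tile $j$ adjacent to an occupied tile $i$, part of $T_j$ lies in $L_R$ (since $\delta\le R/\sqrt d$) and sees the vacuum.

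To obtain the constant $C_d$, we examine a point $x\in L_R$ near the boundary of $L_R$. Consider the half-space or cone toward the vacuum. The fraction of the sphere of directions within angle $\pi/3$ of the outward normal is exactly $C_d$, and along such directions the segment of length $<R$ from $x$ stays in $L_R^c$ when $x$ is on $\partial B(p,R)$ and the empty region is large. This gives a repulsive deficit of at least $C_d\int_{B(0,R)}\phi^+$. The terms $\left[(R_1/R)^d-1\right]\int_{B(0,R_1)\setminus B(0,R)}\phi^+$ and $\int\phi^-$ bound the worst-case gain coming from the shell between $R$ and $R_1$ and from the attractive part. The factor $(R_1/R)^d-1$ arises from a volume-comparison or averaging argument over $x$ in the dilated region. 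Integrating the bound over a set of positive measure inside the domino tile (say the points of $T_j$ within distance $\sim\delta$ of $\partial L_R$) will give $\be_\emptyset>0$ once $\delta$ is small relative to $R$.

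The main obstacle is proving $E_j\ge\overline{E}_j$ for \emph{every} tile, since pointwise $e(x)$ can fall below $C_\phi$ where $L_R^c$ intersects the annulus $B(x,R_1)\setminus B(x,R)$ but not $B(x,R)$. This loss must be compensated by points closer to the boundary. We plan to integrate over $x$ rather than argue pointwise. Using Fubini, we exchange the roles of $x\in L_R$ and $y\in L_R^c$: for each $y\in L_R^c$, the set of $x\in L_R$ with $|x-y|<R$ has volume comparable to $C_d|B(0,R)|$. The set with $R\le|x-y|<R_1$ is at most $(R_1/R)^d-1$ times the corresponding volume. Condition \eqref{cond: phi_global} then makes the total deficit positive per unit of boundary. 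The difficulty is localizing this to individual tiles rather than to $\overline{\gamma}$ as a whole. If tile-wise localization fails, we will fall back on proving $E_{\overline{\gamma}}-\overline{E}_{\overline{\gamma}}\ge \be_+|\overline\gamma|$ directly by summing over $\overline{\gamma}$ together with its saturated neighbourhood, where the energy equals $\overline{E}$ exactly, and counting dominoes as a lower bound on the interface volume.
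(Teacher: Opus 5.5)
\paragraph{There is a genuine gap, in the case $R<R_1$.} This is the only case where the middle term of the hypothesis is nonzero. Your primary route needs the tile-wise inequality $E_j\ge\overline{E}_j$ for every tile, and this is false there.

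Take a single isolated point $p$ and $\phi\ge0$ with $\phi=K$ on $[0,R)$ and $\phi=\eta>0$ on $[R,R_1)$. This satisfies the hypothesis when $K$ is large. For every $x$ in the tile $T_j\ni p$,
\[ e(x)=C_\phi-\int_{B(p,R)^c}\phi(|x-y|)\,\d y<C_\phi , \]
so $E_j<\delta^dC_\phi=\overline{E}_j$. The ``energy from the vacuum'' corollary is therefore unavailable.

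Your replacement does not work as stated. The Fubini exchange over $y\in L_R(\omega)^c$ fails: if $y$ is at distance in $(R,R_1)$ from $L_R(\omega)$, no $x\in L_R(\omega)$ has $|x-y|<R$, yet some have $R\le|x-y|<R_1$. So no per-$y$ comparison of the kind you propose can hold. The fallback sentence is not an argument.

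\paragraph{The missing idea.} This is what the paper does. Fix $\epsilon<R$ and stratify $L_R(\omega)\cap\widehat{\gamma}$ by distance to $\partial L_R(\omega)$:
\begin{itemize}
\item at distance $\ge R_1$, $e(x)\ge C_\phi$;
\item at distance in $[\epsilon,R_1)$, $e(x)\ge C_\phi-\int_{B(0,R_1)\setminus B(0,\epsilon)}\phi^+$;
\item within $\epsilon$ of the boundary, $e(x)\ge C_d\int_{B(0,R)}\phi^+-\int\phi^-$.
\end{itemize}
A parallel-volume (Kneser-type) inequality $V_{\omega,\gamma,R_1}\le (R_1/\epsilon)^dV_{\omega,\gamma,\epsilon}$ for these inner layers then lets the outermost layer pay for the middle layer's deficit. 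This works as soon as
\[ \epsilon^d\Bigl(C_d\int_{B(0,R)}\phi^+-\int\phi^-\Bigr)\ge (R_1^d-\epsilon^d)\int_{B(0,R_1)\setminus B(0,\epsilon)}\phi^+ , \]
which the strict hypothesis gives for $\epsilon$ close to $R$.

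The result is $E_{\overline{\gamma}}\ge C_\phi|L_R^{-\epsilon}(\omega)\cap\widehat{\gamma}|$. With $\delta\le(R-\epsilon)/(2\sqrt d)$, occupied tiles and empty domino tiles both lie in $L_R^{-\epsilon}(\omega)$. So the Peierls excess is $C_\phi\delta^d$ per empty domino tile; $C_\phi>0$ follows from the hypothesis because $C_d<1$. The excess is thus global, measured against $C_\phi$ on empty domino tiles, not tile by tile. Your remark that $(R_1/R)^d-1$ comes from a volume comparison points the right way, but the comparison must be between inner parallel layers of $L_R(\omega)$ in the $x$ variable.

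\paragraph{The $C_d$ mechanism is reversed, even for $R>R_1$.} In that case your corollary route does coincide with the paper's. However, a cone ``toward the vacuum'' need not lie in $L_R(\omega)^c$, since other points may fill it. In any case a deficit there only gives an upper bound on $e(x)$. What is needed is a lower bound valid for every $x\in L_R(\omega)$.

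Take $z\in\omega$ with $|x-z|\le R$. The cone with apex $x$, axis $z-x$, half-angle $\pi/3$ and radius $R$ lies in $B(z,R)$. Indeed, for $y$ in the cone put $a=|y-x|$ and $b=|z-x|$, both in $[0,R]$; then $|y-z|^2\le a^2+b^2-ab\le\max(a,b)^2\le R^2$. Hence
\[ e(x)\ge C_d\int_{B(0,R)}\phi^+-\int\phi^-=:\be_\emptyset>0 . \]
This gives $E_j\ge\delta^d\be_\emptyset$ on empty domino tiles, which lie inside $L_R(\omega)$ for small $\delta$. The same bound gives $E_j\ge0=\overline{E}_j$ on all empty tiles. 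Occupied tiles satisfy $E_j\ge\overline{E}_j$ because $B(y,R_1)\subset L_R(\omega)$ once $\delta$ is small relative to $R-R_1$.
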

	
	For any $x,z \in \R^d$ with $x\neq z$ we denote $B_{sec}(z,x)$ the hyperspherical sector with polar angle $\frac{\pi}{3}$, radius $R$, $z$ as the centre and directed toward $x$.
	\begin{figure}[H]
		\centering
		\begin{tikzpicture}[scale=2]
			\draw (0,0) circle (1.1);
			\draw (0,0) ellipse (1.1 and 0.3);
			\draw[dashed] (30:0.6) arc (30:90:0.6);
			\draw[->] (0,0) -- (0,1.2) node[above] {$x$};
			\fill[blue, opacity=0.3] (0,0) -- (30:1.1) arc (30:150:1.1) -- cycle;
			\node[below] at (0,0) {$z$};
			\node[below] at (-0.4,0.8) {$B_{sec}(z,x)$};
			\node[below] at (0.4, 0.85) {$\frac{\pi}{3}$};
		\end{tikzpicture}
		\caption{$B_{sec}(z,x)$}
	\end{figure}
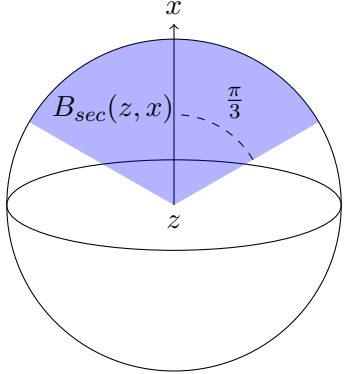
	The constant $C_d$ corresponds to the ratio of the volume of $B_{sec}(z,x)$ and the volume of $B(z,R)$.
	
	\begin{proof}
		First we will consider the case where $R \geq R_1$ and prove that the interaction satisfies a Peierls-like condition using the dominoes \eqref{def: dominoes}. We fix $\delta \leq \frac{R-R_1}{2\sqrt{d}}$. Let's consider a configuration $\omega$ such that $N_{T_0}(\omega) \geq 1$ we have for any $y \in T_0$, $B(y, R_1) \subset L_R(\omega)$ and therefore 
		\begin{equation}
			E_0(\omega) =  \int_{T_0} \int_{L_R(\omega)} \phi(|x-y|)\d x \d y \geq \delta^d C_\phi = \overline{E}_0(\omega).
		\end{equation}
			Next we consider a configuration $\omega$ such that $N_{T_0}(\omega) =0$. If $L_R(\omega) \cap T_0 = \emptyset$, then $E_0(\omega) = \overline{E}_0(\omega) =0$. Otherwise, for any $x \in L_R(\omega) \cap T_0$ and $z \in \omega$, the closest point in the configuration to $x$,  we know that at least we have $B_{sec}(z,x)$ included in the halo. Therefore, we directly obtain the following lower bound 
		\begin{align*}
			\int_{L_R(\omega)-x} \phi \d y \geq \int_{B_{sec}(z,x)} \phi^+\d y - \int_{\R^d} \phi^-\d y.
		\end{align*}
		Since the potential is radial and $R\geq R_1$ we have that 
		\begin{align*}
			\int_{B_{sec}(x,z)} \phi^+ \d y = C_d \int_{B(0,R_1)} \phi^+ \d y.
		\end{align*}
		If we denote 
		\begin{equation}
			\be_\emptyset := C_d \int_{B(0,R_1)} \phi^+ \d y - \int_{\R^d} \phi^-\d y,
		\end{equation}
		then, by \eqref{cond: diluted_pairwise interaction}, we have $\be_\emptyset > 0$. Consequently,
		\begin{align*}
			E_0(\omega) \geq  \int_{L_R(\omega) \cap T_0} \be_\emptyset \d x= |L_R(\omega) \cap T_0| \be_\emptyset.
		\end{align*}
		Therefore, we have $E_0 \geq \overline{E}_0$. Let us consider a contour $\gamma$ and a configuration $\omega$ that achieves this contour, for any domino $(i,j) \in D(\gamma)$ we have that $T_j \subset L_R(\omega)$ and thus
		\begin{equation}
			E_j(\omega) \geq \delta^d \be_\emptyset >0. 
		\end{equation}
		Therefore, we can conclude using the dominoes approach that the interaction satisfies a Peierls-like condition. 
		\newline
		
		Now we consider the case where $R < R_1$, we will adopt a global approach to the analysis of the energy inside each contour. Let us consider $\epsilon > R$ which we will fix close enough to $R$ and $\delta \leq \frac{R-\epsilon}{2\sqrt{d}}$. Before the proof of the Peierls condition, we need the following geometrical lemma. 
		
		\begin{lemma}\label{lem: geometrique_theta_epsilon}
			For $0<\epsilon < R \leq R_1$, we define $\theta_\epsilon$ as
			\begin{align*}
				\theta_\epsilon := \inf\limits_{\substack{\omega \in \Omega_f \\ \gamma : V_{\omega,\gamma, \epsilon}>0}}\left\{ \frac{V_{\omega,\gamma, \epsilon}}{V_{\omega,\gamma, R_1} - V_{\omega,\gamma, \epsilon}} \right\}
			\end{align*}
			where
			\begin{align*}
				V_{\omega,\gamma, r} =|\partial L_R(\omega) \oplus B(0,r) \cap L_R(\omega) \cap \widehat{\gamma} |. 
			\end{align*}
			Then we have
			\begin{align*}
				\theta_\epsilon = \frac{\epsilon^d}{R_1^d - \epsilon^d}.
			\end{align*} 
		\end{lemma}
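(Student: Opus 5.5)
The identity $\theta_\epsilon = \epsilon^d/(R_1^d-\epsilon^d)$ splits into two inequalities, and I would prove them separately. Write $U := L_R(\omega)\cap \widehat{\gamma}$ and $S := \partial L_R(\omega)$, so that $V_{\omega,\gamma,r} = |(S\oplus B(0,r))\cap U|$ is the volume of the part of the halo inside $\widehat\gamma$ lying within distance $r$ of the halo boundary. The quantity to control is the ratio
$$\frac{V_{\epsilon}}{V_{R_1}-V_{\epsilon}} .$$

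\textbf{Upper bound on $\theta_\epsilon$.} I would exhibit near-extremal configurations. Fix an $\omega$ and a contour $\gamma$ for which the relevant boundary region of the halo is essentially flat inside $\widehat\gamma$: for instance a very dense configuration filling a half-space, cut by a large contour crossing the interface. In that case $V_r \approx r\cdot \mathcal{H}^{d-1}(S\cap\widehat\gamma)$, which is linear in $r$. Comparing this with a configuration whose halo is a single ball of radius $R$, one sees that the extremal value comes from the most curved geometry.

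Here I would need to be careful about what the claimed value $\epsilon^d/(R_1^d-\epsilon^d)$ actually encodes. It is the ratio of volumes of balls of radii $\epsilon$ and $R_1$. Presumably it arises from a counting or covering argument: every point $x$ of $U$ within distance $R_1$ of $S$ is associated with a boundary point $s$, and the region $B(s,R_1)\cap L_R$ is compared with $B(s,\epsilon)$. So the upper bound should be attained asymptotically by a configuration in which each relevant boundary piece contributes whole balls, namely isolated points whose halos have small $\gamma$-intersections, arranged so that the relevant neighbourhoods scale like $r^d$.

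\textbf{Lower bound on $\theta_\epsilon$.} This is the main step. I would decompose $(S\oplus B(0,R_1))\cap U$ by nearest boundary point and show that the volume within distance $R_1$ is at most $(R_1/\epsilon)^d$ times the volume within distance $\epsilon$. The tool would be a scaling/homothety argument: for each $s\in S$, the map $x\mapsto s+\frac{\epsilon}{R_1}(x-s)$ sends points at distance at most $R_1$ into the $\epsilon$-layer. Since the halo is a union of balls of radius $R\ge\epsilon$ that are star-shaped about suitable centres, the contraction keeps points inside $L_R(\omega)$. The coarea formula in the distance-to-$S$ variable, together with the Jacobian factor $(\epsilon/R_1)^d$, then gives $V_\epsilon \ge (\epsilon/R_1)^d V_{R_1}$, which rearranges to the claimed ratio.

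\textbf{Main obstacle.} I expect the hardest part to be the rigorous justification that the contraction preserves both membership in $L_R(\omega)$ and membership in $\widehat\gamma$. The restriction to $\widehat\gamma$ breaks the homogeneity of the argument near the contour boundary. The first thing I would try is to use the saturation zone $\partial^{-}\overline\gamma$ to argue that near the boundary of $\widehat\gamma$ the halo is either full or empty, so that $S$ does not meet that region.
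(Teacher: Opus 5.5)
\textbf{Gap in the sharpness half.} The half you call the upper bound, $\theta_\epsilon\le\epsilon^d/(R_1^d-\epsilon^d)$, is not established, and the near-extremizers you suggest point in the wrong direction. For a flat interface $V_r$ is linear in $r$, so the ratio tends to $\epsilon/(R_1-\epsilon)$. For $d\ge 2$ this is strictly larger than $\epsilon^d/(R_1^d-\epsilon^d)$. For the halo of an isolated point (a ball of radius $R$, with $R_1\ge R$) you get $V_\epsilon/V_{R_1}=1-(1-\epsilon/R)^d\ge\epsilon/R$, which is even farther from $(\epsilon/R_1)^d$. Boundaries that are flat or convex, seen from inside the halo, make $V_r/r^d$ strictly decreasing. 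The infimum requires $V_r/r^d$ to be asymptotically constant on $[\epsilon,R_1]$, and this forces $\partial L_R(\omega)\cap\widehat\gamma$ to shrink to a single point. So the extremal geometry is a vanishing \emph{hole} in the halo. This is the paper's limiting case $L_R(\omega)=\R^d\setminus\{0\}$, where $V_r=|B(0,r)|$. A concrete construction: put many points on a sphere of radius slightly larger than $R$ around some $x_0$, occupy all tiles further out up to a large distance, and take $\gamma$ to be the contour around $x_0$. Since $L\ge R_1$, you have $B(x_0,R_1)\subset\widehat\gamma$. The hole then shrinks to $\{x_0\}$, and $V_r\to|B(0,r)|$ for $r\in\{\epsilon,R_1\}$.

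\textbf{The other half: a different route, with its key step missing.} For $V_{R_1}\le(R_1/\epsilon)^dV_\epsilon$, your nearest-point contraction is a genuinely different route from the paper's. The paper approximates $L_R(\omega)^c$ by finite unions of open balls $K_n$ and writes the shell volume by the coarea formula as $S_d\int_0^{R_1}\sum_i\alpha_{i,\gamma}(r)(r_i+r)^{d-1}\,\d r$. It then uses that the visible fractions $\alpha_{i,\gamma}$ are nonincreasing in $r$, rescales $r\mapsto(R_1/\epsilon)r$, and lets $n\to\infty$. Your route avoids the approximation, but its key step is asserted, not proved. The map $\Phi(x)=s(x)+\lambda(x-s(x))$ with $\lambda=\epsilon/R_1$ is not a homothety, since $s(x)$ moves with $x$. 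Its Jacobian is not $\lambda^d$ (on a flat piece it is $\lambda$). Without injectivity, no Jacobian bound controls $|\Phi(E)|$, and the coarea formula alone does not supply the comparison.

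The missing ingredient is the nearest-point inequality. Write $s=s(x)$ and $s'=s(x')$, and add $|x-s|\le|x-s'|$ to $|x'-s'|\le|x'-s|$. This gives $\langle\Phi(x)-\Phi(x'),s-s'\rangle\ge(1-\lambda)|s-s'|^2$, hence $|x-x'|\le\lambda^{-1}|\Phi(x)-\Phi(x')|$. So $\Phi$ is injective with a $\lambda^{-1}$-Lipschitz inverse, and therefore $|\Phi(E)|\ge\lambda^d|E|$ (a Kneser-type inequality for parallel volumes). That $\Phi(x)$ stays in the halo also follows from the nearest-point property: points of $(s(x),x]$ are at positive distance from $\overline{L_R(\omega)^c}$. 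Star-shapedness plays no role.

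Your plan for handling $\widehat\gamma$ uses the same thickness fact the paper invokes. With $L$ as in the paper, $\partial L_R(\omega)$ stays at distance more than $R_1$ from the correct tiles bordering $\overline\gamma$. Hence the segments $[s(x),x]$ never leave $\widehat\gamma$, and your argument then yields $V_\epsilon\ge(\epsilon/R_1)^dV_{R_1}$.
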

		\begin{proof}
			For any contour $\gamma$ and any configuration $\omega$ that achieves this contour, we observe that 
			\begin{align*}
				\partial L_R(\omega) \oplus B(0,r) \cap L_R(\omega) \cap \widehat{\gamma} = L_R(\omega)^c \oplus B(0,r) \cap \widehat{\gamma} \quad \text{if } r \leq R, \\
				\partial L_R(\omega) \oplus B(0,r) \cap L_R(\omega) \cap \widehat{\gamma} \subset L_R(\omega)^c \oplus B(0,r) \cap \widehat{\gamma} \quad \text{if } r > R.
			\end{align*}
			We can approximate $L_R(\omega)^c$ using a union of open balls, so there is $\left((x_i, r_i)\right)_{i\in\N}$ and $K_n = \bigcup_{i=1}^n \mathring{B}(x_i, r_i)$ such that $K_n \subset L_R(\omega)^c$ and $ L_R(\omega)^c \backslash K_n$ is decreasing and converges to $\emptyset$. For a fixed $n$, we have
			\begin{align*}
				\left|K_n \oplus B(0,R_1) \cap \widehat{\gamma}\right| &= \int\limits_{0}^{R_0} H_{d-1}\left(\partial(K_n \oplus B(0,r)) \cap \widehat{\gamma}\right) \d r \\
				&=  S_d \int\limits_{0}^{R_0} \sum_{i=1}^n \alpha_{i,\gamma}(r)  (r_i +r )^{d-1} \d r
			\end{align*}
			where $H_{d-1}$ is the $(d-1)$-Hausdorff measure, $S_d$ is the surface of the unit ball and $\alpha_{i,\gamma}(r)$ is the proportion of the surface of $\mathring{B}(x_i, r_i+r)$ that appears in $\partial(K_n \oplus B(0,r)) \cap \widehat{\gamma}$, 
			\begin{align*}
				\alpha_{i,\gamma}(r)= \frac{H_{d-1} \left(\partial \mathring{B}(x_i, r_i+r) \cap \partial(K_n \oplus B(0,r)) \cap \widehat{\gamma}\right)}{H_{d-1}\left(\partial \mathring{B}(x_i, r_i+r)\right)}.
			\end{align*} 
			Furthermore, $\alpha_{i,\gamma}$ is decreasing. Indeed, let $r'>r$ for $z \in \partial \mathring{B}(x_i, r_i+r')$ that appears in $ \partial (K_n \oplus B(0,r')) \cap \widehat{\gamma}$ then $y = \frac{r}{r'}(z-x_i) + x_i \in \partial \mathring{B}(x_i, r_i+r)$ would appear in  $\partial (K_n \oplus B(0,r)) \cap \widehat{\gamma}$. Even though $\widehat{\gamma}$ is not convex but it is indeed true because the contours are thick enough so that $d_2\left(\partial(K_n \oplus B(0,R)) \cap \widehat{\gamma}, \partial \gamma\right) > 2R$. As such, we have
			\begin{align*}
				\left|K_n \oplus B(0,R_1) \cap \widehat{\gamma}\right| &= S_d \left(\frac{R_1}{\epsilon}\right)^d  \int\limits_{0}^{\epsilon} \sum_{i=1}^n \alpha_{i,\gamma}\left(\frac{R_1}{\epsilon} r\right)  \left(\frac{\epsilon}{R_1} r_i +r \right)^{d-1} \d r \\
				&\leq  S_d \left(\frac{R_1}{\epsilon}\right)^d \int\limits_{0}^{\epsilon} \sum_{i=1}^n \alpha_{i,\gamma}\left( r\right)  \left( r_i +r \right)^{d-1} \d r \\
				&\leq \left(\frac{R_1}{\epsilon}\right)^d \left|K_n \oplus B(0,\epsilon) \cap \widehat{\gamma}\right|.
			\end{align*}
			Since Lebesgue measure is continuous and, for $r \in \{\epsilon, R_1\}$, $K_n \oplus B(0,r) \cap \widehat{\gamma}$ converges to $L_R(\omega)^c \oplus B(0,r) \cap \widehat{\gamma}$ with the Hausdorff metric, we can take the limit as $n$ tends to infinity on both sides of the inequality and we get
			\begin{align*}
				V_{\omega, \gamma, R_1} &\leq \mathcal{V}\left(L_R(\omega)^c \oplus B(0,R_1) \cap \widehat{\gamma}\right) \\
				&\leq \left(\frac{R_1}{\epsilon}\right)^d \left|L_R(\omega)^c \oplus B(0,\epsilon) \cap \widehat{\gamma}\right| \\
				& \leq \left(\frac{R_1}{\epsilon}\right)^d V_{\omega, \gamma, \epsilon}.
			\end{align*} 
			Using the previous inequality, we have
			\begin{align*}
				\frac{V_{\omega,\gamma, \epsilon}}{V_{\omega,\gamma, R_1} - V_{\omega, \gamma, \epsilon}} \geq \frac{\epsilon^d}{R_1^d - \epsilon^d}.
			\end{align*}
			We have equality for $L_R(\omega) =\R^d \backslash \{0\}$ but $\omega$ is not a valid configuration since we would have infinitely many points near the origin. But it can be obtained as a limit of configurations and therefore,
			\begin{align*}
				\theta_\epsilon = \frac{\epsilon^d}{R_1^d - \epsilon^d}.
			\end{align*}
		\end{proof}
		For any $r>0$ and any configuration $\omega \in \Omega$, we denote by $L_R^{-r}(\omega)	:= L_R(\omega) \backslash \partial L_R(\omega) \oplus B(0,r)$. We consider $L\geq R_1 > R > \epsilon$. By construction, we have $L_R(\omega) \backslash L_R^{-R_1}(\omega) \subset \bigcup_{\gamma \in \Gamma(\omega)} \widehat{\gamma \backslash \partial^- \gamma}$. For $x \in L_R^{-R_1}(\omega)$, we know by construction that $B(x,R_1) \subset L_R(\omega)$. Therefore, we completely recover the positive part of $\phi$ and we have
		\begin{align}\label{ineq :prop_peierls_global1}
			\int_{L_R(\omega)-x} \phi \d y \geq \int_{\R^d} \phi \d y = C_\phi.
		\end{align}
		Now, for $x \in L_R^{-\epsilon}(\omega) \backslash L_R^{-R_1}(\omega)$, by construction, we know that $B(x,\epsilon) \subset L_R(\omega)$ and thus obtain the following inequality
		\begin{align}
			\int_{L_R(\omega)-x} \phi \d y &\geq \int_{B(0,\epsilon)} \phi^+ \d y - \int_{\R^d} \phi^- \d y  \nonumber\\
			& \geq C_\phi - \int_{B(0,R_1)\backslash B(0,\epsilon)} \phi^+ \d y. \label{ineq :prop_peierls_global2}
		\end{align}
		Finally, for $x \in L_R(\omega) \backslash L_R^{-\epsilon}(\omega)$, we know that there is $z \in \omega$, the closest to $x$ and that $B_{sec}(z,x) \subset L_R(\omega)$. Since $\phi$ is radial we have 
		\begin{align}
			\int_{L_R(\omega)-x} \phi \d y &\geq \int_{B_{sec}(0,x-z)} \phi^+ \d y - \int_{\R^d} \phi^-\d y \nonumber \\
			&\geq C_d \int_{B(0,R)} \phi^+ \d y - \int_{\R^d} \phi^- \d y. \label{ineq :prop_peierls_global3}
		\end{align}
		By combining inequalities \eqref{ineq :prop_peierls_global1}, \eqref{ineq :prop_peierls_global2}, \eqref{ineq :prop_peierls_global3}, we obtain 
		\begin{align*}
			\begin{split}
				E_{\overline{\gamma}}(\omega) \geq	C_\phi |L_R^{-\epsilon}(\omega) \cap \widehat{\gamma}| + \left(C_d \int\limits_{B(0,R)} \phi^+ \d y - \int\limits_{\R^d} \phi^- \d y \right)  V_{\omega,\gamma, \epsilon} & \\
				- \left(V_{\omega,\gamma, R_1} - V_{\omega, \gamma, \epsilon}\right) & \int\limits_{B(0,R_1) \backslash B(0,\epsilon)} \phi^+ \d y,
			\end{split}
		\end{align*}
		By definition of $\theta_\epsilon$, we get
		\begin{align*}
			\begin{split}
				E_{\overline{\gamma}}(\omega) \geq C_\phi |L_R^{-\epsilon}(\omega) \cap \widehat{\gamma}| + \theta_\epsilon \left(C_d \int\limits_{B(0,R)} \phi^+ \d y- \int\limits_{\R^d} \phi^- \d y -  \frac{1}{\theta_\epsilon} \int\limits_{B(0,R_1) \backslash B(0,\epsilon)} \phi^+ \d y\right) \\
				\times \left(V_{\omega,\gamma, R_1}  - V_{\omega, \gamma, \epsilon}\right).
			\end{split} 
		\end{align*}
		 According to Lemma \ref{lem: geometrique_theta_epsilon}, we have $\frac{1}{\theta_\epsilon} \to (\frac{R_1}{R})^d - 1 $ as $\epsilon$ goes to $R$. Therefore, under assumption \eqref{cond: diluted_pairwise interaction} and for $\epsilon$ close enough to $R$, we have
		\begin{align*}
			C_d \int\limits_{B(0,R)} \phi^+ \d y- \int\limits_{\R^d} \phi^- \d y - \frac{1}{\theta_\epsilon} \int\limits_{B(0,R_1) \backslash B(0,\epsilon)} \phi^+ \d y \geq 0.
		\end{align*}
		As a consequence, we have
		\begin{align*}
			E_{\overline{\gamma}}(\omega) \geq |L_R^{-\epsilon}(\omega) \cap \widehat{\gamma}| C_\phi,
		\end{align*}
		and thus
		\begin{align*}
			E_{\overline{\gamma}}(\omega) - \overline{E}_{\overline{\gamma}}(\omega) \geq C_\phi \left( |L_R^{-\epsilon}(\omega) \cap \widehat{\gamma}| - \delta^d |\gamma_1| \right).
		\end{align*}
		The difference between the volumes is bounded from below by the volume of empty tiles covered by the halo of radius $R-\epsilon$. Since $ \delta \leq \frac{R-\epsilon}{2\sqrt{d}}$, we know that for dominoes $(i,j)$, the presence of a point in $T_i$ assures that $T_j \subset L_{R-\epsilon}(\omega)$ even though $T_j$ is void of point. Therefore, using \cite[Lemma 5]{renaudchanQuermass}, we know there is $r_0>0$ and $v_0 = r_0 \delta^d$ such that
		\begin{align*}
			E_{\overline{\gamma}\backslash \partial^- \overline{\gamma}}(\omega) - \overline{E}_{\overline{\gamma}\backslash \partial^- \overline{\gamma}}(\omega) \geq C_\phi v_0 |\overline{\gamma}|.
		\end{align*}
		
		The existence of a critical activity $z_\beta^c \in O_\beta$ for which the diluted pairwise interaction exhibit a phase transition is therefore a consequence of Theorem \ref{thm: liquid_gaz1}. However, we can be more precise about the localisation of such critical value. Let us consider $a(\beta) = \min\{2, e^{-\beta c}\}$, where $ 0<c < \min\{ \frac{\beta \be_+}{6}, \beta \delta^d C_\phi \}$, and 
		\begin{gather}
			\hat{z}_\beta^- := \frac{\ln(1 + e^{\beta \delta^d C_\phi - a(\beta)})}{\delta^d}, \quad \hat{z}_\beta^+ = \frac{\ln(1 + e^{\beta \delta^d C_\phi + a(\beta)})}{\delta^d}. 
		\end{gather}
		By direct computation, for $\beta$ large enough we have 
		\begin{gather}
			G(\hat{z}_\beta^-) \leq -\frac{a(\beta)}{\delta^d} + 2\eta(\tau, l_0) = -\frac{a(\beta)}{\delta^d} + 4 e^{-\frac{\beta \be_+ l_0}{6} +\frac{8 l_0}{3}} < 0 \\
			G(\hat{z}_\beta^+) \geq \frac{a(\beta)}{\delta^d} - 2\eta(\tau, l_0) = \frac{a(\beta)}{\delta^d} - 4 e^{-\frac{\beta \be_+ l_0}{6} +\frac{8 l_0}{3}} > 0
		\end{gather}
		and thus, we can conclude that $z_\beta^c \in (\hat{z}_\beta^-, \hat{z}_\beta^+)$. Finally, we have  
		\begin{align*}
			\hat{z}_\beta^- - C_\phi \beta = -\frac{a(\beta)}{\delta^d} + \frac{1}{\delta^d} \ln(1+ e^{-\beta C_\phi \delta^d + a(\beta)}) = -\frac{a(\beta)}{\delta^d} + o(a(\beta)) \\
			\hat{z}_\beta^+ - C_\phi \beta = \frac{a(\beta)}{\delta^d} + \frac{1}{\delta^d} \ln(1+ e^{-\beta C_\phi \delta^d - a(\beta)}) = \frac{a(\beta)}{\delta^d} + o(a(\beta)).
		\end{align*}
		As a consequence, we have $|z_\beta^c - C_\phi \beta| = O(e^{-c\beta})$. 
	\end{proof}
	
	\section*{Acknowledgement}
	DD and CRC acknowledge support from the CDP C2EMPI and are grateful to the French State (France-2030 programme), the University of Lille,  the Initiative d'excellence de l'Université de Lille, and the Métropole Européenne de Lille for  funding the R-CDP-24-004-C2EMPI project. CRC also acknowledges support  from Persyval-lab (ANR-11-61 LABX-0025-01).
	
	\bibliographystyle{abbrvnat}
	\bibliography{LGPT}
	
\end{document}